\newtheorem{rmk}{Remark}
\newtheorem{assumption}{Assumption}
\newtheorem{prop}{Proposition}
\newtheorem{corol}{Corollary}
\newtheorem{lm}{Lemma}
\newtheorem{theorem}{Theorem}
\newcommand{\R}{\mathbb{R}}
\newcommand{\bX}{\mathbf{X}}
\newcommand{\bx}{\mathbf{x}}
\newcommand{\bm}{\mathbf{m}}
\newcommand{\bM}{\mathbf{M}}
\newcommand{\cX}{\mathcal{X}}
\newcommand{\bZ}{\mathbf{Z}}
\newcommand{\bk}{\mathbf{k}}
\newcommand{\bh}{\mathbf{h}}
\newcommand{\ba}{\mathbf{a}}
\newcommand{\by}{\mathbf{y}}
\newcommand{\N}{\mathbb{N}}
\newcommand{\E}{\mathrm{E}}
\newcommand{\PP}{\mathbb{P}}
\newcommand{\V}{\mathrm{Var}}
\DeclareMathOperator{\ind}{\perp \!\!\! \perp}
\date{}
\begin{document}

\title{Variance reduction for estimation of Shapley effects and adaptation to unknown input distribution} 

\author[1]{Baptiste Broto}
\author[2]{Fran\c{c}ois Bachoc }
\author[1]{Marine Depecker}

\affil[1]{CEA, LIST, Universit\'e Paris-Saclay, F-91120, Palaiseau, France}
\affil[2]{Institut de Math\'ematiques de Toulouse, Universit\'e Paul Sabatier, F-31062 Toulouse, France}

\maketitle

\begin{abstract}
The Shapley effects are global sensitivity indices: they quantify the impact of each input variable on the output variable in a model. In this work, we suggest new estimators of these sensitivity indices. When the input distribution is known, we investigate the already existing estimator defined in \cite{song_shapley_2016} and suggest a new one with a lower variance. Then, when the distribution of the inputs is unknown, we extend these estimators. We provide asymptotic properties of the estimators studied in this article. We also apply one of these estimators to a real data set.
\end{abstract}

\section{Introduction}
Sensitivity indices are important tools in sensitivity analysis. They aim to quantify the impact of the input variables on the output of a model. In this way, they give a better understanding of numerical models and improve their interpretability. For example, the sensitivity indices enable to know if the variation of a specific input variable can lead to an important variation of the output or not. 

In global sensitivity analysis, the input variables $X_1,...,X_p$ are assumed to be random variables. Sobol defined the first sensitivity indices for a general framework, called the Sobol indices, in \cite{sobol_sensitivity_1993}. Many other sensitivity indices have been defined and studied (see \cite{borgonovo_common_2016} for a general review of these indices). Nevertheless, many of these indices suffer from a lack of interpretation when the input variables are dependent. To overcome this lack of interpretation, many variants of the Sobol indices have been suggested for dependent input variables (see for example \cite{jacques_sensitivity_2006}, \cite{mara_variance-based_2012} and \cite{chastaing_indices_2013}).

Recently, Owen defined new sensitivity indices in \cite{owen_sobol_2014} called "Shapley effects" that have beneficial properties and that are easy to interpret, even in the dependent case. The main advantages of these sensitivity indices compared to the Sobol indices (and their variants) are: they remain positive, their sum is equal to one and there is exactly one index for each input (and there are no indices for groups of variables).
The Shapley effects are based on the notion of "Shapley value", that originates from game theory in \cite{shapley_value_1953}. The Shapley value has been widely studied (\cite{colini-baldeschi_variance_2016}, \cite{fatima_linear_2008}) and applied in different fields (see for example \cite{moretti_combining_2008} or \cite{hubert_strategic_2003}). However, only few articles focus on the Shapley effects in sensitivity analysis (see \cite{owen_sobol_2014,song_shapley_2016,owen_shapley_2017,
iooss_shapley_2017,broto_sensitivity_2019,benoumechiara_shapley_2019}).  Song et al. suggested an algorithm to estimate the Shapley effects in \cite{song_shapley_2016} that is implemented in the R package "{\verb|sensitivity|}".

In this paper, we work on the Shapley effects and their estimation. We divide this estimation into two parts. The first part is the estimation of quantities that we call the "conditional elements", on which the Shapley effects depend.  The second part consists in aggregating the estimates of the conditional elements in order to obtain estimates of the Shapley effects. We call this part the $W$-aggregation procedure. We refer to Sections \ref{section_Wu} and \ref{section_structure} for more details on these two parts.

First, we focus on the estimation of the conditional elements with two different estimators: the double Monte-Carlo estimator (used in the algorithm of \cite{song_shapley_2016}) and the Pick-and-Freeze  estimator (see \cite{homma_importance_1996} for the independent case) that we extend to the case where the inputs are dependent. We present the two estimators when it is possible to sample from the conditional distributions of the input vector. Then we suggest a new $W$-aggregation procedure, based on the subsets of $\{1,...,p\}$, to estimate all the Shapley effects (for all the input variables) at the same time. We choose the best parameters to minimize the sum of the variances of all the Shapley effects estimators. The algorithm of \cite{song_shapley_2016} uses a $W$-aggregation procedure based on permutations of $\{1,...,p\}$. We study this $W$-aggregation procedure and explain how it minimizes the variance of the estimates of the Shapley effects. Our suggested $W$-aggregation procedure provides an improved accuracy, compared to the $W$-aggregation procedure in \cite{song_shapley_2016}, using all the estimates of the conditional elements for all the estimates of the Shapley effects. The comparison between the two $W$-aggregation procedures is illustrated with numerical experiments. These experiments also show that the double Monte-Carlo estimator provides better results than the Pick-and-Freeze estimator.

Then, we extend the estimators of the conditional elements (the double Monte-Carlo estimator and the Pick-and-Freeze estimator) to the case where we only observe an i.i.d. sample from the input variables. The extension relies on nearest-neighbour techniques, which are widely used for many non-parametric estimation problems \cite{berrett_efficient_2019,berrett_efficient_2019-1}. To the best of our knowledge, the estimators we suggest are the first that do not require exact samples from the conditional distributions of the input variables. One of our main results is the consistency of these estimators under some mild assumptions, and their rate of convergence under additional regularity assumptions. We then give the consistency of the estimators of the Shapley effects with the two $W$-aggregation procedures and using the double Monte-Carlo estimator or the Pick-and-Freeze estimator. We observe, in numerical experiments, that the estimators of the Shapley effects have a similar accuracy as when it is possible to sample from the conditional distributions. We also apply one of these estimators on meteorological data, more specifically on the output of three different metamodels predicting the ozone concentration in function of nine input variables (with some categorical variables and some continuous variables). This application enables to study the influence of the inputs variables on black-box machine learning procedures.

The paper is organized as follows. In Section \ref{section_notations}, we define the framework of global sensitivity analysis and we recall the definition and some properties of the Shapley effects. In Section \ref{section_Wu}, we assume that the input distribution is known and we present the two methods to estimate the conditional elements. In Section \ref{section_structure}, we suggest a new $W$-aggregation procedure and we study the $W$-aggregation procedure used by the algorithm of \cite{song_shapley_2016}. In Section \ref{section_loi_connue}, we summarize the four estimators of the Shapley effects, give their consistency and we illustrate them with numerical applications. In Section \ref{section_loi_inconnue}, we assume that the input distribution is unknown and that we just observe a sample of the input vector. We give consistent estimators of the conditional elements and thus consistent estimators of the Shapley effects in this case, and we illustrate this with numerical experiments. In Section \ref{section_applireal}, we apply one of our estimators to a real data set. We conclude in Section \ref{section_conclusion}. All the proofs are provided in the appendix.

\section{The Shapley effects}\label{section_notations}

We let $\bX=(X_1,...,X_p)$ be the input random vector on the input domain $\cX=\cX_1\times ... \times \cX_p$ with distribution $\PP_\bX$. We assume that there is an output variable $Y$ in $\R$ defined by
\begin{equation}
Y=f(\bX),
\end{equation}
with $f\in L^2(\PP_\bX)$.
We write $[1:p]$ for the set $\{1,2,...,p\}$. For any non-empty $u\subset [1:p]$, letting $u=\{i_1,...,i_r\}$ with $i_1<i_2<...<i_r$, we define $\bX_u=(X_{i_1},...,X_{i_r}).$ We can now define the conditional elements $(W_u)_{u\subset[1:p]}$ and the Shapley effects $(\eta_i)_{i\in [1:p]}$.

For all $u\subset [1:p]$, we define:
\begin{equation}
V_u:=\V(\E(Y|\bX_u))
\end{equation}
and
\begin{equation}
E_u:=\E(\V(Y|\bX_{-u})),
\end{equation}
where $-u:=[1:p]\setminus u$. We let by convention $\E(Y|\bX_\emptyset)=\E(Y)$ and $\V(Y|\bX_\emptyset)=\V(Y)$. We define the conditional elements $(W_u)_{u\subset[1:p]}$ as being either $(V_u)_{u\subset[1:p]}$ or $(E_u)_{u\subset[1:p]}$. For all $i\in [1:p]$, we define the Shapley effect $\eta_i$ as in \cite{song_shapley_2016} by:
\begin{equation}\label{eq_def_shap}
\eta_i:=\frac{1}{p\V(Y)}\sum_{u\subset -i} \begin{pmatrix}
p-1\\ |u|
\end{pmatrix}^{-1} (W_{u\cup\{i\}}-W_u),
\end{equation}
where we define $-i$ as the subset $[1:p]\setminus \{i\}$ and $|u|$ as the cardinality of $u$.

\begin{rmk}
As explained in \cite{song_shapley_2016}, the Shapley effects do not depend on whether $(W_u)_{u\subset[1:p]}$ denotes $(V_u)_{u\subset[1:p]}$ or $(E_u)_{u\subset[1:p]}$.
\end{rmk}

\begin{rmk}\label{rmk_psi}
The quantities $W_{\emptyset}$ and $W_{[1:p]}$ are equal to $0$ and $\V(Y)$ respectively. The variance of $Y$ is easy to estimate, so we assume without loss of generality that we know the theoretical value $\V(Y)$.
\end{rmk}
We can notice that the Shapley effects are a sum over the subsets $u\subset -i$. Another classical way to compute the Shapley effects is to sum over the permutations of $[1:p]$, see Proposition \ref{prop_perm}. We let $\mathcal{S}_p$ be the set of permutations of $[1:p]$. An element $\sigma \in \mathcal{S}_p$ is a bijective function from $[1:p]$ to $[1:p]$. We let $\sigma^{-1}$ be its inverse function. As in \cite{song_shapley_2016}, for $i\in [1:p]$ and $\sigma \in\mathcal{S}_p$, we let $P_i(\sigma):=\{\sigma(j)|\; j \in [1: \sigma^{-1}(i)-1] \}$.

\begin{prop}\label{prop_perm} [Equation (11) in \cite{song_shapley_2016}, Section 4.1 (see also \cite{castro_polynomial_2009})]
We have
\begin{equation}\label{eq_prop_perm}
\eta_i=\frac{1}{p!\V(Y)} \sum_{\sigma \in \mathcal{S}_p}(W_{P_i(\sigma)\cup \{i\}}- W_{P_i(\sigma))}).
\end{equation}
\end{prop}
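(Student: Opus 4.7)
The plan is to prove the equivalence of the two formulas by a straightforward combinatorial reindexing: rewrite the sum over permutations as a sum over subsets by grouping together all permutations that produce the same predecessor set $P_i(\sigma)$.

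First, I would fix $i \in [1:p]$ and, for each $u \subset -i$, count the number of permutations $\sigma \in \mathcal{S}_p$ such that $P_i(\sigma) = u$. By the definition of $P_i(\sigma)$, such a permutation must place the element $i$ in position $|u|+1$, must place the $|u|$ elements of $u$ (in some order) in positions $1,\ldots,|u|$, and must place the $p-1-|u|$ elements of $-i \setminus u$ (in some order) in positions $|u|+2,\ldots,p$. The number of such permutations is therefore
\begin{equation*}
|u|! \, (p-1-|u|)!.
\end{equation*}

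Next, since $\{P_i(\sigma) : \sigma \in \mathcal{S}_p\}$ ranges over all subsets of $-i$, I would partition $\mathcal{S}_p$ according to the value of $P_i(\sigma)$ and rewrite
\begin{equation*}
\sum_{\sigma \in \mathcal{S}_p} \bigl(W_{P_i(\sigma)\cup\{i\}} - W_{P_i(\sigma)}\bigr) = \sum_{u \subset -i} |u|!\,(p-1-|u|)!\,\bigl(W_{u\cup\{i\}} - W_u\bigr).
\end{equation*}
Dividing by $p!\,\V(Y)$ and using
\begin{equation*}
\frac{|u|!\,(p-1-|u|)!}{p!} = \frac{1}{p}\binom{p-1}{|u|}^{-1},
\end{equation*}
the right-hand side of \eqref{eq_prop_perm} becomes exactly the right-hand side of \eqref{eq_def_shap}, which is $\eta_i$.

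There is no real obstacle here: the whole argument is a standard Shapley-value computation, and the only delicate point is being careful with the off-by-one in the definition of $P_i(\sigma)$ (it contains $\sigma(1),\ldots,\sigma(\sigma^{-1}(i)-1)$, i.e.\ the elements strictly before $i$ in the permutation order), which determines that $i$ sits in position $|u|+1$ and hence gives the factorial count $|u|!(p-1-|u|)!$ rather than something shifted.
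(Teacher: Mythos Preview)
Your proof is correct and is exactly the standard combinatorial argument for this equivalence. Note, however, that the paper does not actually supply its own proof of Proposition~\ref{prop_perm}: it is stated with a citation to \cite{song_shapley_2016} and \cite{castro_polynomial_2009} and then used without further justification. Your counting argument is precisely the one underlying those references, so there is nothing to compare against.
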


Our aim is to estimate the Shapley effects. We have seen two different ways to compute the Shapley effects, given by Equation \eqref{eq_def_shap} (with a sum over the subsets) and Equation \eqref{eq_prop_perm} (with a sum over the permutations). These two equations will correspond to two different $W$-aggregation procedures of the Shapley effects.

\section{Estimation of the conditional elements}\label{section_Wu}

We explain now how to estimate these $(W_u)_{\emptyset \varsubsetneq u \varsubsetneq [1:p]}$ in a restricted setting (recall that $W_\emptyset=0$ and $W_{[1:p]}=\V(Y)$ are known). The restricted setting is the following: as in \cite{song_shapley_2016}, we will assume that for any $\emptyset \varsubsetneq u \varsubsetneq [1:p]$ and $\bx_u \in \mathcal{X}_u:=\prod_{i\in u} \mathcal{X}_i$, it is feasible to generate an i.i.d. sample from the distribution of $\bX_{-u}$ conditionally to $\bX_u=\bx_u$. Moreover, we assume that we have access to the computer code of $f$. 

To estimate $W_u$, we suggest two different estimators. The first one consists in a double Monte-Carlo procedure to estimate $E_u$, and it is the estimator used in the algorithm of \cite{song_shapley_2016}. The other one is the well-known Pick-and-Freeze estimator (see \cite{homma_importance_1996} for the first definition, \cite{gamboa_sensitivity_2014,gamboa_statistical_2016} for theoretical studies) for $V_u$, that we extend to the case where the input variables $(X_i)_{i\in [1:p]}$ are not independent.

Finally, we assume that each evaluation of $f$ is costly, so we define the cost of each estimator $\widehat{W}_u$ as the number of evaluations of $f$.

\subsection{Double Monte-Carlo}\label{section_doubleMC}
A first way to estimate $E_u=\E(\V(Y|\bX_{-u}))$ is using double Monte-Carlo: a first Monte-Carlo step of size $N_I$ for the conditional variance, another one of size $N_u$ for the expectation. Thus, the estimator of $E_u$ suggested in \cite{song_shapley_2016} is
\begin{equation}\label{eq_VuMC_connue}
\widehat{E}_{u,MC}:=\frac{1}{N_u}\sum_{n=1}^{N_u} \frac{1}{N_I-1}\sum_{k=1}^{N_I}\left(f(\bX_{-u}^{(n)},\bX_{u}^{(n,k)})-\overline{f(\bX_{-u}^{(n)})}\right)^2,
\end{equation}
where for $n=1,...,N_u$, $\overline{f(\bX_{-u}^{(n)})}:=N_I^{-1}\sum_{k=1}^{N_I}f(\bX_{-u}^{(n)},\bX_{u}^{(n,k)})$, $(\bX_{-u}^{(n)})_{n \in [1:N_u]}$ is an i.i.d. sample with the distribution of $\bX_{-u}$ and $(\bX_{u}^{(n,k)})_{k \in [1:N_I]}$ conditionally to $\bX_{-u}^{(n)}$ is i.i.d. with the distribution of $\bX_{u}$ conditionally to $\bX_{-u}=\bX_{-u}^{(n)}$. For all $n\in [1:N_u]$, the computation of 
$$
\frac{1}{N_I-1}\sum_{k=1}^{N_I}\left(f(\bX_{-u}^{(n)},\bX_{u}^{(n,k)})-\overline{f(\bX_{-u}^{(n)})}\right)^2
$$
requires the values of $\left(f(\bX_{-u}^{(n)},\bX_{u}^{(n,k)})\right)_{k \in [1:N_I]}$. We will take $N_I=3$, as suggested in \cite{song_shapley_2016}. Thus, the double Monte-Carlo estimator given in Equation \eqref{eq_VuMC_connue} has a cost (number of evaluations of $f$) of $3N_u$.

We remark that for $\bx^{(1)}, \bx^{(2)}\in \mathcal{X}$ and for $\emptyset \varsubsetneq u \varsubsetneq [1:p]$, we let $(\bx_u^{(1)},\bx_{-u}^{(2)})$ be the element $\mathbf{v}\in \mathcal{X}$ such that $\mathbf{v}_u=\bx_u^{(1)}$ and $\mathbf{v}_{-u}=\bx_{-u}^{(2)}$, and we let $f(\bx_u^{(1)},\bx_{-u}^{(2)}):=f(\mathbf{v})$. We use this notation throughout the paper.

\begin{rmk}
The estimator of Equation \eqref{eq_VuMC_connue} is an unbiased estimator of $E_u=\E(\V(Y|\bX_{-u}))$.
\end{rmk}

\subsection{Pick-and-freeze}
We now provide a second estimator of $W_u$: the Pick-and-Freeze estimator for $V_u$. We have
$$
V_u=\V(\E(Y|\bX_u))=\E(\E(Y|\bX_u)^2)-\E(Y)^2.
$$
Remark that $\E(Y)$ is easy to estimate so we assume without loss of generality that we know the value of $\E(Y)$ (for the numerical applications, we will take the empirical mean). It remains to estimate $\E(\E(Y|\bX_u)^2)$, which seems to be complicated. We prove the following proposition that enables to simplify the formulation of this quantity.
\begin{prop}\label{prop_paf}
Let $\bX=(\bX_u,\bX_{-u})$ and $\bX^u=(\bX_u,\bX'_{-u})$ of distribution $\PP_\bX$ such that, a.s. $\PP_{(\bX_{-u},\bX_{-u}')|\bX_u=\bx_u}=\PP_{\bX_{-u}|\bX_u=\bx_u}\otimes \PP_{\bX_{-u}|\bX_u=\bx_u}$. We have
\begin{equation}\label{paf}
\E(\E(Y|\bX_u)^2)=\E(f(\bX)f(\bX^u)).
\end{equation}
\end{prop}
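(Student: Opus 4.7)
The plan is to compute $\E(f(\bX)f(\bX^u))$ by conditioning on $\bX_u$ and exploiting the assumed conditional independence of $\bX_{-u}$ and $\bX'_{-u}$ given $\bX_u$. By the tower property, I would write
\[
\E(f(\bX)f(\bX^u)) = \E\bigl(\,\E(f(\bX_u,\bX_{-u})\,f(\bX_u,\bX'_{-u}) \mid \bX_u)\bigr).
\]

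Next, I would unpack the hypothesis $\PP_{(\bX_{-u},\bX'_{-u})\mid \bX_u=\bx_u} = \PP_{\bX_{-u}\mid\bX_u=\bx_u} \otimes \PP_{\bX_{-u}\mid\bX_u=\bx_u}$: conditionally on $\bX_u=\bx_u$, the vectors $\bX_{-u}$ and $\bX'_{-u}$ are independent with the same distribution. Since, conditionally on $\bX_u=\bx_u$, the function $(\by,\by') \mapsto f(\bx_u,\by)f(\bx_u,\by')$ splits as a product of a function of $\by$ alone and a function of $\by'$ alone, the conditional expectation factorizes:
\[
\E(f(\bx_u,\bX_{-u})f(\bx_u,\bX'_{-u}) \mid \bX_u=\bx_u) = \E(f(\bx_u,\bX_{-u}) \mid \bX_u=\bx_u)\cdot \E(f(\bx_u,\bX'_{-u}) \mid \bX_u=\bx_u).
\]
Both factors on the right-hand side are, by definition, equal to $\E(Y \mid \bX_u)(\bx_u)$, so the inner conditional expectation equals $\E(Y \mid \bX_u)(\bx_u)^2$.

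Finally, taking expectation over $\bX_u$ gives
\[
\E(f(\bX)f(\bX^u)) = \E\bigl(\E(Y \mid \bX_u)^2\bigr),
\]
which is the stated identity combined with Equation \eqref{paf}.

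There is essentially no obstacle: the entire content is the standard ``Pick-and-Freeze'' trick of replacing an a priori quadratic quantity $\E(\E(Y\mid \bX_u)^2)$ by a bilinear expression involving an independent copy $\bX'_{-u}$ of the frozen coordinates. The only small point to be careful about is measurability of the factorization step, which is handled by Fubini/conditional independence once one writes the conditional expectation as an integral against the product kernel $\PP_{\bX_{-u}\mid \bX_u=\bx_u}\otimes \PP_{\bX_{-u}\mid \bX_u=\bx_u}$.
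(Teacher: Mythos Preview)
Your proposal is correct and follows essentially the same approach as the paper: condition on $\bX_u$ via the tower property, use the assumed product form of the conditional distribution to factorize the inner expectation into $\E(f(\bX)\mid\bX_u)^2$, and then take the outer expectation. The paper writes the factorization explicitly as a double integral against $\PP_{\bX_{-u}\mid\bX_u}\otimes\PP_{\bX_{-u}\mid\bX_u}$, but the argument is identical.
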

Remark that Proposition \ref{prop_paf} enables to write a double expectation as one single expectation, that we estimate by a simple Monte-Carlo.
Thus, we suggest the Pick-and-Freeze estimator, for $\emptyset \varsubsetneq u \varsubsetneq [1:p]$,
\begin{equation}\label{eq_vuPF_connue}
\widehat{V}_{u,PF}:=\frac{1}{N_u}\sum_{n=1}^{N_u} f \left( \bX_u^{(n)}, \bX_{-u}^{(n,1)} \right) f \left( \bX_u^{(n)}, \bX_{-u}^{(n,2)} \right)- \E(Y)^2,
\end{equation}
where $(\bX_{u}^{(n)})_{n\in [1:N_u]}$ is an i.i.d. sample with the distribution of $\bX_{u}$ and where $\bX_{-u}^{(n,1)}$ and $\bX_{-u}^{(n,2)}$ conditionally to $\bX_{u}^{(n)}$ are independent with the distribution of $\bX_{-u}$ conditionally to $\bX_{u}=\bX_{u}^{(n)}$. This estimator has a cost of $2N_u$.

\section{ \texorpdfstring{$W$}{Lg}-aggregation procedures}\label{section_structure}
As we can see in Equation \eqref{eq_def_shap} or in Equation \eqref{eq_prop_perm}, the Shapley effects are functions of the conditional elements $(W_u)_{u\subset [1:p]}$. In Section \ref{section_Wu}, we have seen how to estimate these conditional elements when it is possible to sample from the conditional distributions of the input vector. In this section, we assume that we have estimators $(\widehat{W}_u)_{u\subset [1:p]}$. From Remark \ref{rmk_psi}, we let $\widehat{W}_{\emptyset}=W_\emptyset=0$ and $\widehat{W}_{[1:p]}=W_{[1:p]}=\V(Y)$. We also add the following assumption that will be needed for the theoretical results that we will prove.
\begin{assumption}\label{hypNu}
For all $\emptyset \varsubsetneq u \varsubsetneq [1:p]$, $\widehat{W}_{u}$ is computed with a cost $\kappa N_u$ by $
\widehat{W}_{u}=\frac{1}{N_u}\sum_{n=1}^{N_u}\widehat{W}_{u}^{(n)}
$ where the $(\widehat{W}_{u}^{(n)})_{n\in [1:N_u]}$ are independent and identically distributed. The $(\widehat{W}_{u})_{u\subset[1:p]}$ are independent. The integer $\kappa\in \N^*$ is the number of evaluations of the computer code $f$ (i.e. the cost) for each $\widehat{W}_u^{(n)}$.
\end{assumption}
Assumption \ref{hypNu} means that we estimate the $(W_u)_{\emptyset \varsubsetneq u \varsubsetneq [1:p]}$ by Monte-Carlo, independently and with different costs $( \kappa N_u)_{\emptyset \varsubsetneq u \varsubsetneq [1:p]}$. The accuracy $N_u$ corresponds to computing $N_u$ independent and identically distributed estimators $\widehat{W}_u^{(1)},...,\widehat{W}_u^{(N_u)}$ that are averaged. We have seen in Section \ref{section_Wu} two estimators that satisfy Assumption \ref{hypNu}: the double Monte-Carlo estimator (with $\kappa=3$) and the Pick-and-Freeze estimator (with $\kappa=2$).

We call "$W$-aggregation procedure" an algorithm that estimates the Shapley effects from the estimates $(\widehat{W}_u)_{\emptyset \varsubsetneq u \varsubsetneq [1:p]}$ and that selects the values of the accuracies $(N_u)_{\emptyset \varsubsetneq u \varsubsetneq [1:p]}$. We first suggest a new $W$-aggregation procedure. Then we obtain a theoretical insight on the $W$-aggregation procedure of \cite{song_shapley_2016}.

\subsection{The subset procedure}
In this section, we suggest a new $W$-aggregation procedure for the Shapley effects. This procedure consists in computing once for all the estimates $\widehat{W}_u$ for all $u\subset[1:p]$, and to store them. Then, we use these estimates to estimate all the Shapley effects.

\subsubsection{The \texorpdfstring{$W$}{Lg}-aggregation procedure}
We suggest to estimate the Shapley effects $(\eta_i)_{i\in [1:p]}$ by using the following $W$-aggregation procedure:

\begin{enumerate}
\item For all $u \subset [1:p]$, compute $\widehat{W}_u$.
\item For all $i\in [1:p]$, estimate $\eta_i$ by
\begin{equation}\label{ShEX}
\widehat{\eta}_i:=\frac{1}{p\V(Y)}\sum_{u\subset -i} \begin{pmatrix}
p-1\\ |u|
\end{pmatrix}^{-1} (\widehat{W}_{u\cup\{i\}}-\widehat{W}_u).
\end{equation}
\end{enumerate}

We call this $W$-aggregation procedure "subset $W$-aggregation procedure".
We can note that each estimate $\widehat{W}_u$ is used for all the estimates $(\widehat{\eta}_i)_{i\in [1:p]}$. It remains to choose the values of the accuracies $(N_u)_{\emptyset \varsubsetneq u \varsubsetneq [1:p]}$.

\subsubsection{Choice of the accuracy of each \texorpdfstring{$\widehat{W}_u$}{Lg}}\label{section_variance}
In this section, we explain how to choose the values of the accuracies $(N_u)_{\emptyset \varsubsetneq u \varsubsetneq [1:p]}$. In the following proposition, we give the best choice of the accuracies $(N_u)_{\emptyset \varsubsetneq u \varsubsetneq [1:p]}$ to minimize $\sum_{i=1}^p \V(\widehat{\eta}_i)$ for a fixed total cost $\kappa \sum_{\emptyset \varsubsetneq u \varsubsetneq [1:p]}N_u$.

\begin{prop}\label{prop_min_variance}
Let a total cost $N_{tot}\in \N$ be fixed.
Under Assumption \ref{hypNu}, if the Shapley effects are estimated with the subset $W$-aggregation procedure, the solution of the relaxed program (i.e. the problem without the constraint of letting the $(N_u)_{\emptyset \varsubsetneq u \varsubsetneq [1:p]}$ be integers)
\begin{equation}\label{eq_otimisation}
\min_{(N_u)_{\emptyset \varsubsetneq u\varsubsetneq [1:p]}\in (0,+\infty)^{2^p-2}}\sum_{i=1}^p \V(\widehat{\eta}_i) \;\;\;\text{\;\; subject to\;\;\;\;} \kappa \sum_{\emptyset \varsubsetneq u\varsubsetneq [1:p]} N_u= N_{tot}
\end{equation}
is $(N_u^{*})_{\emptyset \varsubsetneq u\varsubsetneq [1:p]}$ with for all $\emptyset \varsubsetneq u \varsubsetneq [1:p]$
$$
N_u^{*}=\frac{N_{tot}}{\kappa}\frac{\sqrt{(p-|u|)!|u|!(p-|u|-1)!(|u|-1)! \V (\widehat{W}_{u}^{(1)})}}{\sum_{\emptyset \varsubsetneq v\varsubsetneq [1:p]} \sqrt{(p-|v|)!|v|!(p-|v|-1)!(|v|-1)! \V (\widehat{W}_{v}^{(1)})}}.
$$
\end{prop}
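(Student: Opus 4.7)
The strategy is to rewrite $\sum_{i=1}^p \V(\widehat{\eta}_i)$ as a positive constant times $\sum_v a_v/N_v$, with weights $a_v$ depending only on $|v|$ and $\V(\widehat{W}_v^{(1)})$, and then solve the convex program of minimizing $\sum_v a_v/N_v$ under the linear constraint $\sum_v N_v=N_{tot}/\kappa$ via the Cauchy--Schwarz inequality (equivalently, via Lagrange multipliers).

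First I would regroup the defining sum of $\widehat{\eta}_i$ by the subset $v\subset [1:p]$ indexing $\widehat{W}_v$. For each fixed $v$, the factor $\widehat{W}_v$ occurs exactly once in $\sum_{u\subset -i}\binom{p-1}{|u|}^{-1}(\widehat{W}_{u\cup\{i\}}-\widehat{W}_u)$: with coefficient $+\binom{p-1}{|v|-1}^{-1}$ if $i\in v$ (taking $u=v\setminus\{i\}$) and with coefficient $-\binom{p-1}{|v|}^{-1}$ if $i\notin v$ (taking $u=v$). Calling these coefficients $c_{i,v}$, the independence of the $(\widehat{W}_u)_{u\subset[1:p]}$ and the identity $\V(\widehat{W}_v)=\V(\widehat{W}_v^{(1)})/N_v$ from Assumption \ref{hypNu} give
\[
\sum_{i=1}^p \V(\widehat{\eta}_i)=\frac{1}{p^2\V(Y)^2}\sum_{\emptyset \varsubsetneq v \varsubsetneq [1:p]}\Big(\sum_{i=1}^p c_{i,v}^2\Big)\frac{\V(\widehat{W}_v^{(1)})}{N_v},
\]
where the boundary cases $v=\emptyset$ and $v=[1:p]$ contribute $0$ since $\widehat{W}_\emptyset$ and $\widehat{W}_{[1:p]}$ are constants.

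The combinatorial core is the identity
\[
\sum_{i=1}^p c_{i,v}^2=|v|\binom{p-1}{|v|-1}^{-2}+(p-|v|)\binom{p-1}{|v|}^{-2}=\frac{p\,|v|!(p-|v|)!(|v|-1)!(p-|v|-1)!}{((p-1)!)^2},
\]
which I would establish by expanding the binomials as factorials, factoring out $|v|!(p-|v|)!(|v|-1)!(p-|v|-1)!/((p-1)!)^2$ from both terms, and observing that the remaining bracket collapses to $(p-|v|)+|v|=p$. Substituting reduces the objective to a positive multiple of $\sum_v a_v/N_v$, with $a_v:=(p-|v|)!|v|!(p-|v|-1)!(|v|-1)!\,\V(\widehat{W}_v^{(1)})$.

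The minimization then follows from the Cauchy--Schwarz inequality $\bigl(\sum_v a_v/N_v\bigr)\bigl(\sum_v N_v\bigr)\ge\bigl(\sum_v\sqrt{a_v}\bigr)^2$, with equality if and only if $N_v\propto\sqrt{a_v}$; fixing the proportionality constant by the budget constraint $\kappa\sum_v N_v=N_{tot}$ yields exactly the stated formula for $N_u^{*}$. The main obstacle is the combinatorial identity for $\sum_{i=1}^p c_{i,v}^2$; once it is in hand, the remaining optimization is standard and the bookkeeping with the overall prefactors is routine.
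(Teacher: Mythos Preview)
Your proposal is correct and follows essentially the same approach as the paper: both regroup $\widehat{\eta}_i$ by the subset $v$ indexing $\widehat{W}_v$, compute the same combinatorial identity for $\sum_{i=1}^p c_{i,v}^2$ (your $c_{i,v}$ coincides with the paper's $p\,A_{i,u}$), and then minimize $\sum_v a_v/N_v$ under a linear budget constraint. The only cosmetic difference is that the paper invokes Lagrange multipliers and strict convexity for the last step, whereas you use Cauchy--Schwarz; both arguments are standard and yield the same minimizer.
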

Usually, we do not know the values of $\V(\widehat{W}_{u}^{(1)})$ for $\emptyset \varsubsetneq u \varsubsetneq [1:p]$, but we need them to compute the value of $N_u^{*}$. In practice, we will assume that these values are equal in order to compute $N_u^{*}$.
Furthermore, the sum over the subsets $v$ such that $\emptyset \varsubsetneq v\varsubsetneq [1:p]$ can be too costly to compute. Hence, we make the following approximations in practice:
\begin{equation}\label{eq_heuri}
N_u^{*} \approx \frac{ \frac{N_{tot}}{\kappa}\begin{pmatrix}
p\\ |u|
\end{pmatrix}^{-\frac{1}{2}} \begin{pmatrix}
p\\ |u|-1
\end{pmatrix}^{-\frac{1}{2}}}{\sum_{\emptyset \varsubsetneq v\varsubsetneq [1:p]} \begin{pmatrix}
p\\ |v|
\end{pmatrix}^{-\frac{1}{2}} \begin{pmatrix}
p\\ |v|-1
\end{pmatrix}^{-\frac{1}{2}} }
 \approx \frac{ \frac{N_{tot}}{\kappa}\begin{pmatrix}
p\\ |u|
\end{pmatrix}^{-1} }{\sum_{\emptyset \varsubsetneq v\varsubsetneq [1:p]} \begin{pmatrix}
p\\ |v|
\end{pmatrix}^{-1}  }=\frac{N_{tot}}{\kappa}\frac{\begin{pmatrix}
p\\|u|
\end{pmatrix}^{-1}}{p-1}.
\end{equation}
Hence, when implementing the subset $W$-aggregation procedure, we will choose $N_u^{*}$ as 
\begin{equation}\label{eq_N_u^*}
N_u^*:=\mathrm{Round}\left( N_{tot}\kappa^{-1}\begin{pmatrix}
p\\ |u|
\end{pmatrix}^{-1} (p-1)^{-1} \right)
\end{equation}
for $\emptyset \varsubsetneq u \varsubsetneq [1:p]$, where $\mathrm{Round}$ is the nearest integer function. In this way, for a fixed total cost, we take the accuracies $(N_u)_{\emptyset \varsubsetneq u \varsubsetneq [1:p]}$ near the optimal choice that minimizes $\sum_{i=1}^p\V(\widehat{\eta}_i)$. Hence, the parameter $N_{tot}$ is now the only parameter left to choose. In practice, this parameter is often imposed as a global budget constraint.

\begin{rmk}\label{rmk_approx}
With the approximation discussed above, the real total cost $\kappa \sum_{\emptyset \varsubsetneq u\varsubsetneq [1:p]}N_u$ can be different from the $N_{tot}$ chosen (because of the approximations and the choice of the closest integer). In this case, we suggest to adapt the value of $N_{tot}$ in order to make the total cost $\kappa \sum_{\emptyset \varsubsetneq u\varsubsetneq [1:p]}N_u^{*}$ take the desired value. 
\end{rmk}

\begin{rmk}\label{rmk_Var=}
In order to compute the $(N_u^*)_{\emptyset \varsubsetneq u \varsubsetneq [1:p]}$ in practice, we assume that the values of $\V(\widehat{W}_{u}^{(1)})$, for $\emptyset \varsubsetneq u \varsubsetneq [1:p]$, are equal. We can see on unreported numerical experiments that this choice of $N_u$ gives much  better results than if we choose the same value of $N_u$ for all $\emptyset \varsubsetneq u \varsubsetneq [1:p]$.
However, it seems difficult to obtain theoretical results on the values of $\V(\widehat{W}_{u}^{(1)})$, as they depend on the conditional distributions of $\bX$ in a complicated way.

Hence, this assumption is more a convenient heuristic to compute the best accuracies $(N_u^*)_{\emptyset \varsubsetneq u \varsubsetneq [1:p]}$ than a real property satisfied in many cases. Proposition \ref{prop_min_variance} and the heuristic in Equation \eqref{eq_heuri} justify the choice of $(N_u^*)_{\emptyset \varsubsetneq u \varsubsetneq [1:p]}$ given in Equation \eqref{eq_N_u^*}, and we make this choice even if the assumption of equal values of the $(\V(\widehat{W}_u^{(1)}))_{\emptyset \varsubsetneq u \varsubsetneq [1:p]}$ is not satisfied.
\end{rmk}

\subsubsection{Consistency}
A straightforward consequence of the subset $W$-aggregation procedure and Equation \eqref{ShEX} is that the consistency of $(\widehat{W}_u)_{u\subset [1:p]}$ implies the consistency of $(\widehat{\eta}_i)_{i\in [1:p]}$ (Assumption \ref{hypNu} is not necessary).

\begin{prop}\label{prop_consistance_shapley1}
Assume that for all $\emptyset \varsubsetneq u \varsubsetneq [1:p]$, we have estimators $\widehat{W}_u$ that converge to $W_u$ in probability (resp. almost surely) when $N_u$ goes to $+\infty$, where $\kappa N_u$ is the cost of $\widehat{W}_u$. If we use the subset $W$-aggregation procedure with the choice of $(N_u^{*})_{\emptyset \varsubsetneq u \varsubsetneq [1:p]}$ given by Equation \eqref{eq_N_u^*}, the estimators of the Shapley effects converge to the Shapley effects in probability (resp. almost surely) when $N_{tot}$ goes to $+\infty$ (where $N_{tot}$ is the total cost of the subset $W$-aggregation procedure).
\end{prop}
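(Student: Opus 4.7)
The plan is to reduce the convergence of the estimators $\widehat{\eta}_i$ to the convergence of the finitely many $\widehat{W}_u$ via the explicit linear formula in Equation~\eqref{ShEX}, after checking that the accuracies $N_u^*$ all diverge when $N_{tot}\to\infty$.

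First, I would argue that the choice in Equation~\eqref{eq_N_u^*} ensures $N_u^*\to+\infty$ as $N_{tot}\to+\infty$ for every $\emptyset\varsubsetneq u\varsubsetneq [1:p]$. Indeed, since $p$ is fixed, the coefficient $\kappa^{-1}\binom{p}{|u|}^{-1}(p-1)^{-1}$ is a strictly positive constant, so before rounding the argument grows linearly in $N_{tot}$, and the rounding perturbation is at most $1/2$. Consequently, by the assumed consistency of each $\widehat{W}_u$, we obtain $\widehat{W}_u \to W_u$ in probability (resp.\ almost surely) as $N_{tot}\to+\infty$, for every $\emptyset\varsubsetneq u\varsubsetneq [1:p]$. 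For $u=\emptyset$ and $u=[1:p]$, the convention $\widehat{W}_\emptyset = 0$ and $\widehat{W}_{[1:p]}=\V(Y)$ from Remark~\ref{rmk_psi} trivially yields $\widehat{W}_u = W_u$ deterministically.

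Next, since the collection $\{u : u\subset [1:p]\}$ is finite (of cardinality $2^p$), the marginal convergences of each $\widehat{W}_u$ can be combined: either a finite intersection of probability-one events still has probability one (in the almost sure case), or a finite union bound applied to a fixed $\varepsilon>0$ handles the convergence in probability case. In either case, the whole vector $(\widehat{W}_u)_{u\subset [1:p]}$ converges jointly to $(W_u)_{u\subset [1:p]}$ in the relevant mode.

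Finally, by Equation~\eqref{ShEX}, each $\widehat{\eta}_i$ is a fixed linear combination of the $\widehat{W}_u$ (with deterministic coefficients depending only on $p$ and $\V(Y)$, which is known). The continuous mapping theorem, applied to the linear, hence continuous, map $(\widehat{W}_u)_{u\subset [1:p]}\mapsto \widehat{\eta}_i$, then yields $\widehat{\eta}_i\to \eta_i$ in probability (resp.\ almost surely) as $N_{tot}\to+\infty$. There is no genuine obstacle here; the only point that needs a brief verification is the first one, namely that each $N_u^*$ indeed tends to infinity with $N_{tot}$, because otherwise the hypothesized marginal consistency could not be invoked.
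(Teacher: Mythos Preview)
Your proof is correct and follows essentially the same approach as the paper, which in fact does not provide an explicit proof but merely states before the proposition that it is ``a straightforward consequence of the subset $W$-aggregation procedure and Equation~\eqref{ShEX}''. Your verification that each $N_u^*\to+\infty$ with $N_{tot}$ is a useful detail that the paper leaves implicit.
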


\subsection{The random-permutation procedure}\label{section_random_perm}
In this section, we present and study the "random-permutation $W$-aggregation procedure" suggested in \cite{song_shapley_2016}.

\subsubsection{The \texorpdfstring{$W$}{Lg}-aggregation procedure}
The $W$-aggregation procedure of the algorithm of \cite{song_shapley_2016} is based on Equation \eqref{eq_prop_perm}. Because of the equation, one could estimate $\eta_i$ by
\begin{equation}\label{songa}
\widehat{\eta}_i=\frac{1}{p!\V(Y)}\sum_{\sigma \in \mathcal{S}_p}\left(\widehat{W}_{P_{i}(\sigma)\cup\{i\}}-\widehat{W}_{P_i(\sigma)}\right),
\end{equation}
for $i\in [1:p]$. In Equation \eqref{songa}, informally, $(\widehat{W}_u)_{\emptyset \varsubsetneq u \varsubsetneq [1:p]}$ are estimators.
However, as the number of permutations is $p!$, there are too many summands and \cite{song_shapley_2016} suggests to replace the sum over all the $p!$ permutations by the sum over $M$ ($M<p!$) random uniformly distributed permutations. Thus, for a fixed $i\in [1:p]$, the estimator of $\eta_i$ suggested in \cite{song_shapley_2016} is
\begin{equation}\label{songb}
\widehat{\eta}_i=\frac{1}{M\V(Y)}\sum_{m=1}^M\left(\widehat{W}_{P_{i}(\sigma_m)\cup\{i\}}(m)- \widehat{W}_{P_i(\sigma_m)}(m)\right),
\end{equation}
where $(\sigma_m)_{m\in [1:M]}$ are independent and uniformly distributed on $\mathcal{S}_p$. If $m ,m'\in [1:M]$ with $m\neq m'$ and $P_{i}(\sigma_m)=P_{i}(\sigma_{m'})=:u$, \cite{song_shapley_2016} estimates twice the same $W_{u}$. To formalize these different estimations, we write $\widehat{W}_u(m)$ the estimation of $W_u$ at step $m$ in Equation \eqref{songb}.

Finally, \cite{song_shapley_2016} reduces the computation cost using the following idea. The authors of \cite{song_shapley_2016} notice that for $1 \leq i<p$, for any permutation $\sigma \in \mathcal{S}_p$ and for $i\in [1:p]$, we have $P_{\sigma(i+1)}(\sigma)=P_{\sigma(i)}(\sigma)\cup \{\sigma(i)\}$. Thus, the algorithm of \cite{song_shapley_2016} uses every estimate $\widehat{W}_{P_{\sigma_m(i)}(\sigma_m)\cup \{ \sigma_m(i)\} }(m)$ for $\widehat{\eta}_{\sigma_m(i)}$ (as an estimator of $W_{P_{\sigma_m(i)}(\sigma_m)\cup \{ \sigma_m(i)\} }$) and for $\widehat{\eta}_{\sigma_m(i+1)}$ (as an estimator of $W_{P_{\sigma_m(i+1)}(\sigma_m)}$). With this improvement, the number of estimations of $W_u$ (for $\emptyset \varsubsetneq u \varsubsetneq [1:p]$) is divided by two when estimating all the Shapley effects $\eta_1,...,\eta_p$. The $W$-aggregation procedure is then

\begin{enumerate}
\item Let $\widehat{\eta}_1=...= \widehat{\eta}_p=0$.
\item For all $m=1,2,...,M$
\begin{enumerate}
\item Generate $\sigma_m$ uniformly distributed on $\mathcal{S}_p$.\label{step_sigmam}
\item Let $prevC=0$.
\item For all $i=1,2,...,p$
\begin{enumerate}
\item Let $u=P_{\sigma_m(i)}(\sigma_m)$.
\item Compute $\widehat{W}_{u\cup\{\sigma_m(i)\}}(m)$.
\item Compute $\widehat{\Delta}=\widehat{W}_{u\cup\{\sigma_m(i)\}}(m)-prevC$.
\item Update $\widehat{\eta}_{\sigma_m(i)}=\widehat{\eta}_{\sigma_m(i)}+\widehat{\Delta}$.
\item Set $prevC=\widehat{W}_{P_{\sigma_m(i+1)}(\sigma_m)}$.
\end{enumerate}
\end{enumerate}
\item Let $\widehat{\eta}_i=\widehat{\eta}_i\slash(\V(Y) M)$ for all $i=1,...,p$.
\end{enumerate}

We write this $W$-aggregation procedure "random-permutation $W$-aggregation procedure".

\begin{rmk}
Recall that in the subset $W$-aggregation procedure, each estimation of $W_u$ was used for the estimation of all the $(\eta_i)_{i\in [1:p]}$ (and not only for two of them). Thus the subset $W$-aggregation procedure seems to be more efficient.
\end{rmk}

\begin{rmk}\label{rmk_exact_permutation_procedure}
When the number of inputs $p$ is small, \cite{song_shapley_2016} suggests to take all the permutations of $[1:p]$ instead of choosing random permutations in Step \ref{step_sigmam} of the random-permutation $W$-aggregation procedure. However, this algorithm requires small values of $p$ and the total cost is a multiple of $p!$ (so there are very restricted possible values). Furthermore, this method still remains very costly due to the computation of $(p-1)!$ conditional variances. For example, in the linear Gaussian framework with $p=10$ (where the computation of the conditional elements is immediate, see \cite{broto_sensitivity_2019}) it spends more than ten minutes computing the Shapley effects.  Hence, the algorithm with all the permutations is not explicitly detailed in \cite{song_shapley_2016}.
\end{rmk}

\subsubsection{Choice of the accuracy of each \texorpdfstring{$\widehat{W}_u$}{Lg}}\label{section_adaptation}

As in Section \ref{section_variance}, we suggest a choice of the accuracies $(N_u)_{\emptyset \varsubsetneq u \varsubsetneq [1:p]}$.

In order to avoid a random total cost, we require for all $\emptyset \varsubsetneq u \varsubsetneq [1:p]$ that the accuracy $N_u$ of the $\left(\widehat{W}_{u}(m)\right)_m$ depends only on $|u|$, and we write $N_{|u|}:=N_u$. In this case, the total cost of the random-permutation $W$-aggregation procedure is equal to $N_{tot}=\kappa M\sum_{k=1}^{p-1}N_k$. Moreover, we assume that the total cost $N_{tot}=\kappa M\sum_{k=1}^{p-1}N_k$ is proportional to $(p-1)$, and thus can be written $N_{tot}=\kappa M N_O(p-1)$ for some fixed $N_O\in \N^*$.
As the permutations $(\sigma_m)_{m\in [1:M]}$  are random, we choose to minimize $\E\left[ \sum_{i=1}^p \V\left( \left. \widehat{\eta}_i \right| (\sigma_m)_{m\in [1:M]} \right)\right] $. 

To compute the optimal values of $(N_u)_{\emptyset \varsubsetneq u \varsubsetneq [1:p]}$, we introduce the following assumption.
\begin{assumption}\label{assum_cost_random}
For all $\emptyset \varsubsetneq u \varsubsetneq [1:p]$ and all $m\in [1:M]$,  $\widehat{W}_{u}(m)$ is computed with a cost $\kappa N_{|u|}$ by $
\widehat{W}_{u}(m)=\frac{1}{N_{|u|}}\sum_{n=1}^{N_{|u|}}\widehat{W}_{u}^{(n)}(m)
$ where the $(\widehat{W}_{u}^{(n)}(m))_{n\in [1:N_u]}$ are independent and identically distributed. The $(\widehat{W}_{u}(m))_{\emptyset \varsubsetneq u \varsubsetneq [1:p],\; m \in [1:M] }$ are independent.
\end{assumption}
When it is possible to sample from the conditional distributions of the input vector, we can generate i.i.d. double Monte-Carlo estimators $(\widehat{E}_{u,MC}(m))_{m\in [1:M]}$ or Pick-and-Freeze estimators $(\widehat{V}_{u,PF}(m))_{m\in [1:M]}$. Hence, they satisfy Assumption \ref{assum_cost_random} by taking $N_u=N_{|u|}$ for all $\emptyset\varsubsetneq u \varsubsetneq [1:p]$.

\begin{prop}\label{prop_min_pu}
Assume that we estimate the Shapley effects with the random-permutation $W$-aggregation procedure under Assumption \ref{hypNu} and that the variances $(\V(\widehat{W}_u^{(1)}(1)))_{\emptyset \varsubsetneq u \varsubsetneq [1:p]}$ are equal. Then, the solution of the problem $$
\min_{(N_k)_{k\in [1:p-1]}\in (0,+\infty)^{p-1}}\E\left[ \sum_{i=1}^p \V\left( \left. \widehat{\eta}_i \right| (\sigma_m)_{m\in [1:M]} \right)\right] \;\;\; \text{ subject to } \;\; \kappa M\sum_{k=1}^{p-1}N_k=\kappa MN_O(p-1)
$$
is $(N_k^{**})_{k\in [1:p-1]}$ with for all $k \in [1:p-1]$,
 $$
 N_k^{**}=N_O.
 $$
\end{prop}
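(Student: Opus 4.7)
My plan is to compute the objective $\E\!\left[\sum_{i=1}^p \V(\widehat{\eta}_i \mid (\sigma_m)_{m})\right]$ explicitly as a function of $(1/N_k)_{k\in[1:p-1]}$, and then minimize under the linear cost constraint via Cauchy--Schwarz.

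First, fix $i\in[1:p]$ and condition on $(\sigma_m)_{m\in[1:M]}$. By the expression in Equation \eqref{songb} and the independence across $m$ and across distinct subsets $u$ in Assumption \ref{assum_cost_random}, and since for each $m$ the two subsets $P_i(\sigma_m)$ and $P_i(\sigma_m)\cup\{i\}$ are distinct, I obtain
\begin{equation*}
\V(\widehat{\eta}_i\mid(\sigma_m)_m) = \frac{1}{M^2 \V(Y)^2}\sum_{m=1}^M\Bigl[\V(\widehat{W}_{P_i(\sigma_m)\cup\{i\}}(m)) + \V(\widehat{W}_{P_i(\sigma_m)}(m))\Bigr].
\end{equation*}
Setting $\sigma^2 := \V(\widehat{W}_u^{(1)}(1))$ (the common value), Assumption \ref{assum_cost_random} yields $\V(\widehat{W}_u(m))=\sigma^2/N_{|u|}$ whenever $\emptyset\varsubsetneq u\varsubsetneq [1:p]$, and $0$ otherwise (because $\widehat{W}_\emptyset=0$ and $\widehat{W}_{[1:p]}=\V(Y)$ are deterministic).

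Next I average over the permutations. Since $\sigma_m$ is uniform on $\mathcal{S}_p$, the position $\sigma_m^{-1}(i)$ is uniform on $[1:p]$, hence $K_{i,m}:=|P_i(\sigma_m)|$ is uniform on $\{0,1,\ldots,p-1\}$. Treating carefully the two boundary cases (when $K_{i,m}=0$ the second variance vanishes; when $K_{i,m}=p-1$ the first vanishes), I get
\begin{equation*}
\E[\V(\widehat{\eta}_i\mid(\sigma_m)_m)] = \frac{\sigma^2}{M\V(Y)^2\,p}\left[\sum_{k=0}^{p-2}\frac{1}{N_{k+1}} + \sum_{k=1}^{p-1}\frac{1}{N_k}\right] = \frac{2\sigma^2}{M\V(Y)^2\,p}\sum_{k=1}^{p-1}\frac{1}{N_k}.
\end{equation*}
The answer does not depend on $i$, so summing over $i\in[1:p]$ gives
\begin{equation*}
\E\!\left[\sum_{i=1}^p \V(\widehat{\eta}_i\mid(\sigma_m)_m)\right] = \frac{2\sigma^2}{M\V(Y)^2}\sum_{k=1}^{p-1}\frac{1}{N_k}.
\end{equation*}

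Finally, the problem reduces to minimizing $\sum_{k=1}^{p-1}\frac{1}{N_k}$ subject to $\sum_{k=1}^{p-1}N_k=N_O(p-1)$. By Cauchy--Schwarz,
\begin{equation*}
\left(\sum_{k=1}^{p-1}\frac{1}{N_k}\right)\!\left(\sum_{k=1}^{p-1}N_k\right) \geq (p-1)^2,
\end{equation*}
with equality if and only if all the $N_k$'s are equal; combined with the constraint this forces $N_k^{**}=N_O$ for every $k\in[1:p-1]$. The only delicate point is the boundary-term bookkeeping in the expectation step; everything else is a routine application of conditional independence and Cauchy--Schwarz.
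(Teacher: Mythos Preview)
Your proof is correct and follows the same overall strategy as the paper: compute the objective explicitly, reduce it to a constant times $\sum_{k=1}^{p-1} 1/N_k$, and minimize under the linear constraint. The execution differs in two minor but pleasant ways. First, instead of decomposing over all subsets $u\subset -i$ and invoking $\PP(P_i(\sigma_m)=u)=\frac{1}{p}\binom{p-1}{|u|}^{-1}$ as the paper does, you observe directly that $|P_i(\sigma_m)|$ is uniform on $\{0,\dots,p-1\}$; since the variances only depend on $|u|$ anyway, this shortcut is entirely natural and avoids the bookkeeping with the coefficients $a_{i,u}$. Second, for the final optimization the paper uses Lagrange multipliers plus a convexity check, whereas your Cauchy--Schwarz argument is a one-line proof that simultaneously identifies the minimizer and establishes its global optimality. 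Both routes are standard; yours is slightly more elementary.
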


Hence, from now on, with the random permutation $W$-aggregation procedure, we will choose the accuracy $N_u=N_O$ for all subset $u$.

\begin{rmk}
As in Remark \ref{rmk_Var=}, we assume in Proposition \ref{prop_min_pu} that the variances $(\V(\widehat{W}_u^{(1)}(1)))_{\emptyset \varsubsetneq u \varsubsetneq [1:p]}$ are equal. This assumption is not easy to check, but is required technically to prove Proposition \ref{prop_min_pu}.
\end{rmk}

\begin{rmk}
With the exact-permutation $W$-aggregation procedure (see Remark \ref{rmk_exact_permutation_procedure}), $N_k^*=N_O p!$ is the solution of the problem $\sum_{i=1}^p \V\left( \widehat{\eta}_i  \right)$ subject to $\sum_{k=1}^{p-1}N_k=p!N_O(p-1)$.
\end{rmk}

There are now two parameters to choose: the number of permutations $M$ and the accuracy $N_O$ of the estimations of the $(W_u)_{\emptyset \varsubsetneq u \varsubsetneq [1:p]}$. Typically, their product $MN_O$ is imposed by budget constraints. 

\subsubsection{Choice of \texorpdfstring{$N_O$}{Lg}}\label{section_choix_param}
We have seen that for all $\emptyset \varsubsetneq u \varsubsetneq [1:p]$, we choose $N_u=N_{|u|}^{**}=N_O$. In this section, we explain why we should choose $N_O=1$ under Assumption \ref{hypNu} and $M$ as large as possible.

Proposition \ref{prop_choix_param} generalizes the result given in \cite{song_shapley_2016}, Appendix B. Its proof is given in the supplementary material, which is much simpler than the arguments in \cite{song_shapley_2016}.

\begin{assumption}\label{assum_sansbiais}
Assumption \ref{assum_cost_random} holds and for all $\emptyset \varsubsetneq u \varsubsetneq [1:p]$, we have $\E(\widehat{W}_u^{(1)}(1))=W_u$.
\end{assumption}
Assumption \ref{assum_sansbiais} ensures that the estimators have a zero bias. Recall that the double Monte-Carlo estimator and the Pick-and-Freeze estimator have a zero bias. Hence, they satisfy Assumption \ref{assum_sansbiais} by generating i.i.d. $(\widehat{W}_u(m))_{m\in [1:M]}$ and by taking $N_u=N_{|u|}$ for all $\emptyset\varsubsetneq u \varsubsetneq [1:p]$.

\begin{prop}\label{prop_choix_param} Let $i\in [1:p]$ be fixed.
Under Assumption \ref{assum_sansbiais}, in order to minimize, over $N_O$ and $M$, the variance of $\widehat{\eta}_i$ with a fixed cost $\kappa MN_O\times (p-1)=\kappa C\times (p-1)$ (for some $C\in \N^*$), we have to choose $N_O=1$ and $M=C$.
\end{prop}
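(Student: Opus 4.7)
The plan is to write $\widehat{\eta}_i$ as an i.i.d. sum, then use the law of total variance conditioning on the random permutation. Concretely, set
\[
Z_m := \widehat{W}_{P_i(\sigma_m)\cup\{i\}}(m) - \widehat{W}_{P_i(\sigma_m)}(m),\qquad m\in [1:M],
\]
so that $\widehat{\eta}_i = (M\V(Y))^{-1}\sum_{m=1}^M Z_m$. Because the $(\sigma_m)_{m\in [1:M]}$ are i.i.d.\ and, by Assumption \ref{assum_cost_random}, the full family $(\widehat{W}_u(m))_{u,m}$ is independent (and, implicitly, independent of the $\sigma_m$), the $(Z_m)$ are i.i.d. Hence $\V(\widehat{\eta}_i) = \V(Z_1)/(M\V(Y)^2)$, and it remains to study $\V(Z_1)$ as a function of $N_O$.

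Next I would apply the law of total variance conditionally on $\sigma_1$:
\[
\V(Z_1) = \E\!\left[\V(Z_1\mid \sigma_1)\right] + \V\!\left[\E(Z_1\mid \sigma_1)\right].
\]
For the second term, Assumption \ref{assum_sansbiais} gives $\E(\widehat{W}_u(1)\mid\sigma_1)=W_u$ for every $u$ (with the convention $\widehat{W}_\emptyset=0$, $\widehat{W}_{[1:p]}=\V(Y)$), so $\E(Z_1\mid\sigma_1) = W_{P_i(\sigma_1)\cup\{i\}}-W_{P_i(\sigma_1)}$. Call its variance $B$; it depends only on the law of $\sigma_1$ and on the deterministic quantities $W_u$, hence not on $N_O$ or $M$.

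For the conditional-variance term, given $\sigma_1$ with $P_i(\sigma_1)=u$, the estimators $\widehat{W}_u(1)$ and $\widehat{W}_{u\cup\{i\}}(1)$ are independent (Assumption \ref{assum_cost_random}), and each is an average of $N_O$ i.i.d.\ copies, so $\V(\widehat{W}_u(1)\mid \sigma_1) = \sigma_u^2/N_O$ with $\sigma_u^2:=\V(\widehat{W}_u^{(1)}(1))$ (taking $\sigma_\emptyset^2=\sigma_{[1:p]}^2=0$). Therefore
\[
\V(Z_1\mid \sigma_1) = \frac{\sigma_{P_i(\sigma_1)\cup\{i\}}^2 + \sigma_{P_i(\sigma_1)}^2}{N_O},
\]
and writing $A:=\E\bigl[\sigma_{P_i(\sigma_1)\cup\{i\}}^2 + \sigma_{P_i(\sigma_1)}^2\bigr]$ (again independent of $N_O$ and $M$) yields $\E[\V(Z_1\mid\sigma_1)] = A/N_O$.

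Plugging into the total-variance decomposition and substituting $M=C/N_O$ from the cost constraint,
\[
\V(\widehat{\eta}_i) = \frac{1}{M\V(Y)^2}\left(\frac{A}{N_O}+B\right) = \frac{1}{C\,\V(Y)^2}\bigl(A+B\,N_O\bigr).
\]
Since $A,B\ge 0$ do not depend on $(N_O,M)$, the right-hand side is a non-decreasing affine function of $N_O$ and is minimized at the smallest admissible integer value, namely $N_O=1$, which forces $M=C$. The only minor subtlety is the careful handling of the boundary subsets $\emptyset$ and $[1:p]$ (where $\widehat{W}$ is deterministic), but this is absorbed cleanly into the conventions $\sigma_\emptyset^2=\sigma_{[1:p]}^2=0$; there is no real obstacle once the independence structure granted by Assumption \ref{assum_cost_random} is exploited to separate the ``permutation randomness'' from the ``estimator randomness''.
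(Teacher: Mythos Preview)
Your proof is correct and follows essentially the same route as the paper: reduce to $\V(Z_1)/M$ by the i.i.d.\ structure, then apply the law of total variance conditionally on $\sigma_1$ to obtain an expression of the form $(A+BN_O)/(C\,\V(Y)^2)$. If anything, your choice to apply the total-variance decomposition directly to the difference $Z_1$ is slightly cleaner than the paper's, which decomposes $\widehat{W}_{P_i(\sigma_1)\cup\{i\}}$ and $\widehat{W}_{P_i(\sigma_1)}$ separately and thereby (implicitly) needs to carry along the cross-covariance term $-2\,\Cov\bigl(W_{P_i(\sigma_1)\cup\{i\}},W_{P_i(\sigma_1)}\bigr)$; your formulation absorbs that term automatically into $B=\V\!\bigl(W_{P_i(\sigma_1)\cup\{i\}}-W_{P_i(\sigma_1)}\bigr)$.
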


From now on, we assume that $N_u=N_O=1$ when we use the random-permutation $W$-aggregation procedure and we will let $M$, the number of random permutations, go to infinity. Then, the total cost $N_{tot}$ of the random-permutation $W$-aggregation procedure is equal to $N_{tot}=\kappa M(p-1)$, for estimating the $p$ Shapley effects $\eta_1,\ldots,\eta_p$. Hence, under Assumption \ref{assum_cost_random} or Assumption \ref{assum_sansbiais}, $\widehat{W}_u(m)=\widehat{W}_u^{(1)}(m)$ and has now a cost $\kappa$.

\subsubsection{Consistency}

We give here two sufficient conditions for the consistency of the estimators of the Shapley effects given by the random-permutation $W$-aggregation procedure. We introduce a general assumption.

\begin{assumption}\label{assum_consis_perm}
For all $u$ such that $\emptyset \varsubsetneq u \varsubsetneq[1:p]$, $\left( \widehat{W}_u(m) \right)_{m\in [1:M]}$ have a cost $\kappa$ (since we chose $N_u=1$) and are identically distributed with a distribution that depends on an integer $N$ such that $
\E\left( \widehat{W}_u(1) \right) \underset{N\rightarrow +\infty}{\longrightarrow} W_u.$
Moreover, for all $u$ such that $\emptyset \varsubsetneq u \varsubsetneq[1:p]$, we have
$$
\frac{1}{M^2} \sum_{m,m'=1}^{M} cov\left( \widehat{W}_{u}(m),\widehat{W}_{u}(m') \right) \underset{N,M\rightarrow+\infty}{\longrightarrow}0.
$$
\end{assumption}
Assumption \ref{assum_consis_perm} is more general than Assumption \ref{assum_sansbiais}. Indeed, it enables the estimators to have a bias and a covariance which go to zero. This assumption will be useful to prove the consistency results in Section \ref{section_consis_shap_echan}. Remark that in Assumption \ref{assum_consis_perm}, for all $\emptyset \varsubsetneq u \varsubsetneq[1:p]$, each estimate $\left( \widehat{W}_u(m) \right)_{m\in [1:M]}$ has a cost $\kappa$, as in Assumption \ref{assum_sansbiais} since we fixed $N_u=N_O=1$.

\begin{prop}\label{prop_consistance_random-permutation $W$-aggregation procedure}
Assume that we estimate the Shapley effects using the random-permutation $W$-aggregation procedure. Let $N_{tot}=\kappa M(p-1)$ be the total cost of the random-permutation $W$-aggregation procedure.
\begin{enumerate}
\item Under Assumption \ref{assum_sansbiais}, the estimates of the Shapley effects converge to the Shapley effects in probability when $N_{tot}$ goes to $+\infty$.
\item Under Assumption \ref{assum_consis_perm}, the estimates of the Shapley effects converge to the Shapley effects in probability when $N_{tot}$ and $N$ go to $+\infty$.
\end{enumerate}

\end{prop}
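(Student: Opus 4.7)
The plan is to decompose the error of each estimator $\widehat{\eta}_i$ into a ``permutation error'' and an ``estimation error''. Namely, I write $\widehat{\eta}_i - \eta_i = A_M + B_{M,N}$ with
\begin{equation*}
A_M := \frac{1}{M\V(Y)}\sum_{m=1}^M \bigl(W_{P_i(\sigma_m)\cup\{i\}} - W_{P_i(\sigma_m)}\bigr) - \eta_i
\end{equation*}
and
\begin{equation*}
B_{M,N} := \frac{1}{M\V(Y)}\sum_{m=1}^M\left[\bigl(\widehat{W}_{P_i(\sigma_m)\cup\{i\}}(m)-W_{P_i(\sigma_m)\cup\{i\}}\bigr)-\bigl(\widehat{W}_{P_i(\sigma_m)}(m)-W_{P_i(\sigma_m)}\bigr)\right].
\end{equation*}
The term $A_M$ depends only on the uniform random permutations $(\sigma_m)_m$: the summands are i.i.d., bounded, with common mean $\eta_i$ by Proposition \ref{prop_perm}, so the strong law of large numbers gives $A_M \to 0$ almost surely, and in particular in probability, as $M \to \infty$. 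It thus suffices to show $B_{M,N} \to 0$ in probability in each of the two cases.

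For part 1, under Assumption \ref{assum_sansbiais}, the estimators $(\widehat{W}_u(m))_{u,m}$ are unbiased and jointly independent, and independent of the permutations. Conditioning on $(\sigma_m)_m$, each summand of $B_{M,N}$ has mean zero, bounded variance (at most $2\max_u \V(\widehat{W}_u^{(1)}(1))$), and the summands are independent across $m$. Hence $\E(B_{M,N}\mid\sigma)=0$ and $\V(B_{M,N}\mid\sigma)\le C/M$ for a constant $C$ independent of $\sigma$, and Chebyshev's inequality yields $B_{M,N} \to 0$ in probability as $M \to \infty$, equivalently as $N_{tot} \to \infty$.

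For part 2, under the weaker Assumption \ref{assum_consis_perm}, I would use a bias-variance decomposition $\E(B_{M,N}^2) = (\E B_{M,N})^2 + \V(B_{M,N})$. The bias $\E B_{M,N}$ is a bounded linear combination of the $2^p-2$ quantities $\E\widehat{W}_u(1) - W_u$, each tending to $0$ as $N \to \infty$, so $\E B_{M,N} \to 0$. For the variance, I would first condition on $(\sigma_m)_m$ and use the natural extension of independence of estimators across distinct subsets $u$: letting $v_m := P_i(\sigma_m)\cup\{i\}$ and $u_m := P_i(\sigma_m)$, we have $i \in v_m$ but $i \notin u_{m'}$, so all cross-subset covariances $\cov(\widehat{W}_u(m),\widehat{W}_{u'}(m'))$ with $u\neq u'$ vanish. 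Consequently $\V(B_{M,N}\mid\sigma)$ reduces to sums of same-subset covariances weighted by indicators $\1\{v_m=v_{m'}\}$ and $\1\{u_m=u_{m'}\}$. Taking expectation over $\sigma$ and invoking $M^{-2}\sum_{m,m'}\cov(\widehat{W}_u(m),\widehat{W}_u(m')) \to 0$ for each of the finitely many $u$, we obtain $\V(B_{M,N}) \to 0$ as $M,N\to\infty$, and Markov's inequality concludes.

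The main obstacle is the variance bound in part 2: the random permutations produce a random pairing of subset indices in $B_{M,N}$, so its covariance structure entangles the law of $\sigma$ with the dependence among the estimators. Cleanly separating these sources of randomness via conditioning on $\sigma$ first, removing the cross-subset terms using independence across subsets, and only then reducing what remains to the covariance sums controlled by Assumption \ref{assum_consis_perm}, is the delicate step.
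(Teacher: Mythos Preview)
Your treatment of part~1 is correct and close in spirit to the paper's. The gap is in part~2: you invoke ``the natural extension of independence of estimators across distinct subsets $u$'', but Assumption~\ref{assum_consis_perm} makes no such claim. It only asserts, for each fixed $u$, that the $(\widehat W_u(m))_m$ are identically distributed with asymptotically vanishing bias and that $M^{-2}\sum_{m,m'}\cov(\widehat W_u(m),\widehat W_u(m'))\to 0$; nothing is said about $\cov(\widehat W_u(m),\widehat W_{u'}(m'))$ for $u\neq u'$. Worse, in the application that motivates this assumption (the nearest-neighbour estimators of Section~\ref{section_loi_inconnue}), all the $\widehat W_u(m)$ are built from the \emph{same} sample $(\bX^{(n)})_{n\le N}$, so cross-subset independence genuinely fails there. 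Your variance computation for $B_{M,N}$ therefore does not go through as written: the cross terms $\cov(\widehat W_{v_m}(m),\widehat W_{u_{m'}}(m'))$ do not vanish and Assumption~\ref{assum_consis_perm} gives you no control over them.

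The paper sidesteps this by a different decomposition. Rather than splitting into a permutation error and an estimation error, it regroups the whole estimator by subset:
\[
\widehat\eta_i=\frac{1}{p\V(Y)}\sum_{u\subset -i}\binom{p-1}{|u|}^{-1}\bigl(\tilde W_{u\cup\{i\},i}-\tilde W_{u,i}\bigr),
\qquad
\tilde W_{u,i}=\binom{p-1}{|u|}\frac{p}{M}\sum_{m:\,P_i(\sigma_m)=u}\widehat W_u(m).
\]
Each $\tilde W_{u,i}$ involves a \emph{single} subset $u$, so its convergence to $W_u$ can be established using only the same-$u$ covariance control of Assumption~\ref{assum_consis_perm}, together with a binomial argument for the random number of summands. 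Convergence of $\widehat\eta_i$ then follows because it is a finite linear combination of these terms---no joint control across subsets is needed. Your $B_{M,N}$ can be salvaged in exactly this way (regroup its summands by subset before estimating variances), but as written the argument is incomplete.
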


\section{Estimators of the Shapley effects}\label{section_loi_connue}

\subsection{Four consistent estimators of the Shapley effects}

Recall that in Section \ref{section_Wu}, we have seen two estimators of the $(W_u)_{\emptyset \varsubsetneq u \varsubsetneq [1:p]}$: double Monte-Carlo (used in the algorithm of \cite{song_shapley_2016}) and Pick-and-Freeze. In Section \ref{section_structure}, we have studied two $W$-aggregation procedures for the Shapley effects using estimators of the $(W_u)_{\emptyset \varsubsetneq u \varsubsetneq [1:p]}$: the subset $W$-aggregation procedure and the random-permutation $W$-aggregation procedure (used in the algorithm of \cite{song_shapley_2016}).  To sum up, four estimators of the Shapley effects are available:
\begin{itemize}
\item subset $W$-aggregation procedure with double Monte-Carlo;
\item subset $W$-aggregation procedure with Pick-and-Freeze;
\item random-permutation $W$-aggregation procedure with double Monte-Carlo, which is the already existing algorithm of \cite{song_shapley_2016};
\item random-permutation $W$-aggregation procedure with Pick-and-Freeze.
\end{itemize}
With the random-permutation $W$-aggregation procedure, we have seen that we need different estimators $(\widehat{W}_u(m))_{m\in [1:M]}$ of the same $W_u$. In this case, we choose i.i.d. realizations of the estimator of $W_u$. Moreover, we have seen in Section \ref{section_choix_param} that when we use the random-permutation $W$-aggregation procedure, we choose $N_u=N_O=1$.

By Propositions \ref{prop_consistance_shapley1} and \ref{prop_consistance_random-permutation $W$-aggregation procedure}, all these four estimators are consistent when the global budget $N_{tot}$ goes to $+\infty$. Indeed, by Proposition \ref{prop_consistance_shapley1}, the consistency of the $(\widehat{W}_u)_{\emptyset \varsubsetneq u \varsubsetneq [1:p]}$ is sufficient for the consistency with the subset procedure and by Proposition \ref{prop_consistance_random-permutation $W$-aggregation procedure}, unbiased and i.i.d. estimators $(\widehat{W}_u(m))_{m\in [1:M]}$ for all $\emptyset \varsubsetneq u \varsubsetneq [1:p]$ provide the consistency with the random-permutation procedure.

\subsection{Numerical comparison of the different algorithms}\label{section_num_connue}
In this section, we carry out numerical experiments on the different algorithms in the restricted framework (where the exact conditional samples are available). 

To compare these estimators, we use the linear Gaussian framework: $\mathcal{X}=\R^p$, $\bX\sim \mathcal{N}(\boldsymbol\mu,\boldsymbol\Gamma)$ and $Y=\sum_{i=1}^p \beta_i X_i$. In this case, the theoretical values are easily computable (see \cite{owen_shapley_2017,iooss_shapley_2017,broto_sensitivity_2019}). We choose $p=10$, $\beta_i=1$ for all $i\in [1:p]$ and $\boldsymbol\Gamma=\mathbf{A}^T\mathbf{A}$ where $\mathbf{A}$ is a $p\times p$ matrix which components are realisations of $p^2$ i.i.d. Gaussian variables with zero mean and unit variance. To compare these different estimators, we fix a total cost (number of evaluations of $f$) of $N_{tot}=54000$. We compute 1000 realizations of each estimator.

\begin{figure}[t!]
\center
\includegraphics[width=12cm,height=10cm]{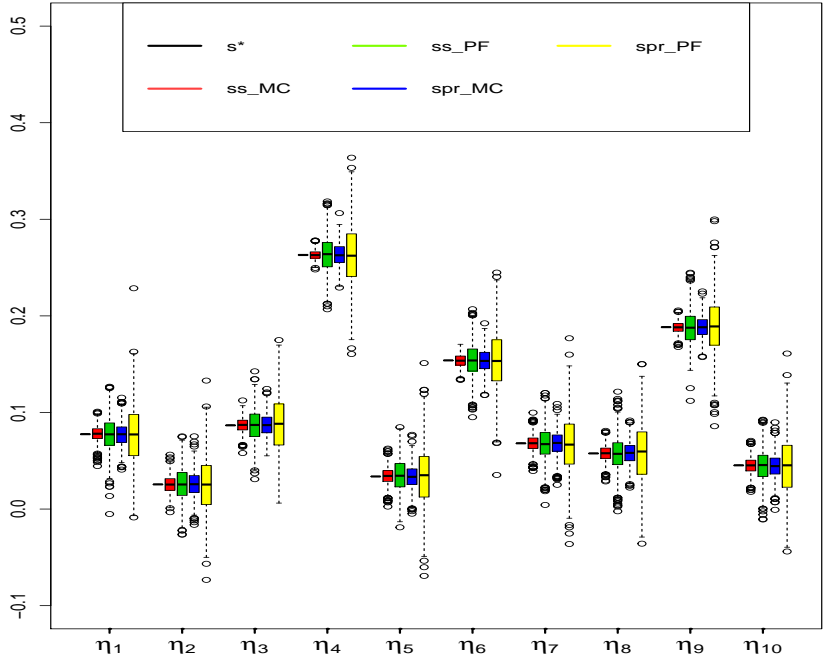}
\caption{Estimation of the Shapley effects in the linear Gaussian framework. In black (s*) we show the theoretical values, in red (ss$\_$MC) the estimates from the subset $W$-aggregation procedure with the double Monte-Carlo estimator, in green (ss$\_$PF) the estimates from the subset $W$-aggregation procedure with the Pick-and-Freeze estimator, in blue (spr$\_$MC) the estimates from the random-permutation $W$-aggregation procedure with the double Monte-Carlo estimator and in yellow (spr$\_$PF) the estimates from the random-permutation $W$-aggregation procedure with the Pick-and-Freeze estimator.}
\label{boxplot_connue}
\end{figure}

In Figure \ref{boxplot_connue}, we plot the theoretical values of the Shapley effects together with the boxplots of the 1000 realizations of each estimator.
\begin{figure}[t!]
\center
\includegraphics[width=7cm,height=7cm]{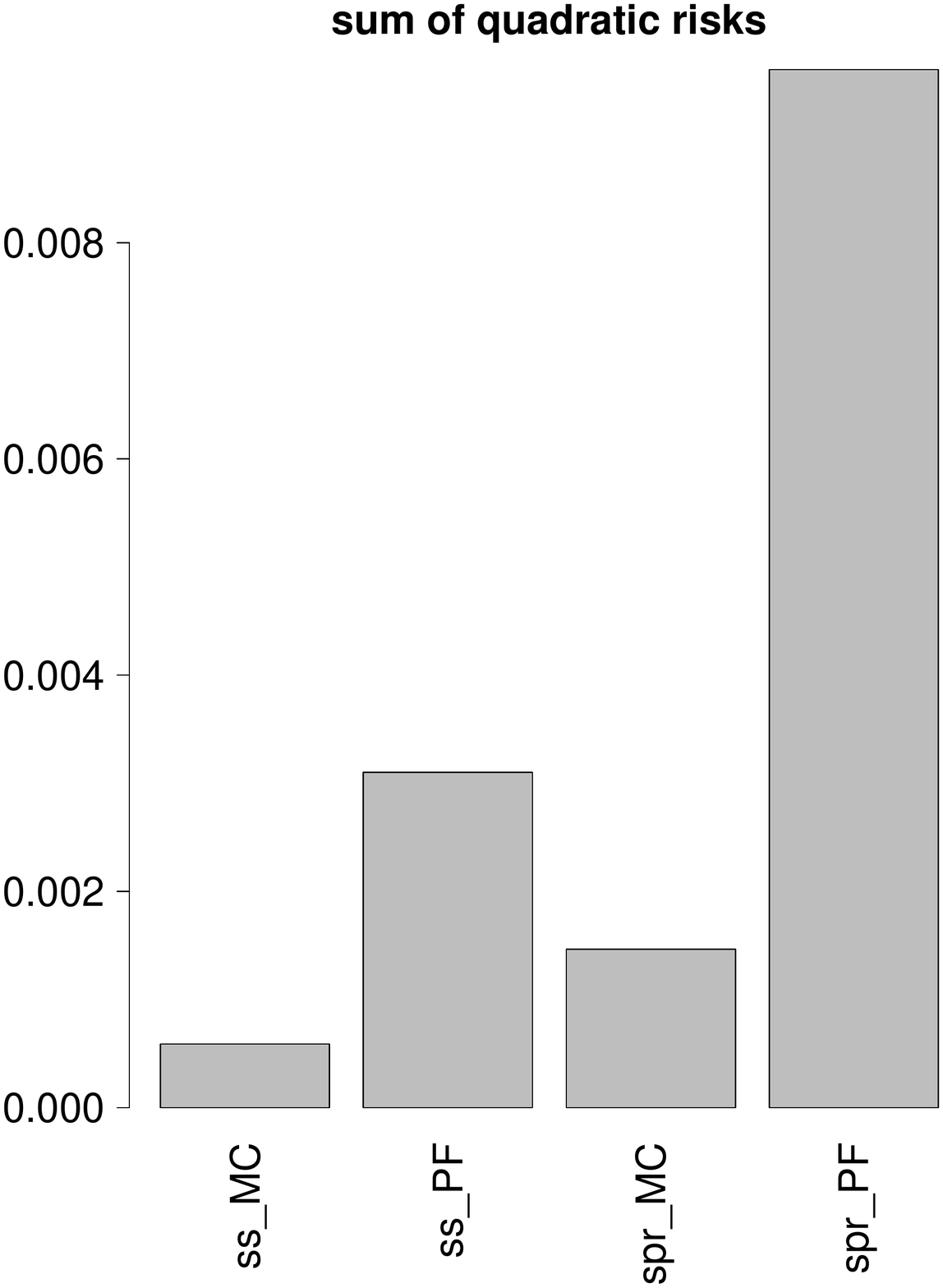}
\caption{Sum over $i\in [1:p]$ of the estimated quadratic risks of the four estimators of the Shapley effects in the linear Gaussian framework.}
\label{barplot_connue}
\end{figure}

In Figure \ref{barplot_connue}, we plot the sum over $i\in [1:p]$ of the quadratic risks: $\sum_{i=1}^p \E \left( (\widehat{\eta}_i-\eta_i)^2 \right)$ (estimated with 1000 realizations) of each estimator. 

We can see that the subset $W$-aggregation procedure gives better results than the random-permutation $W$-aggregation procedure, and that the double Monte-Carlo estimator is better than the Pick-and-Freeze estimator.

\begin{rmk}
It appears that double Monte-Carlo is numerically more efficient than Pick-and-Freeze for estimating the Shapley effects.
Indeed, if we focus only on the estimation of one $W_u$ for a fixed $\emptyset \varsubsetneq u \varsubsetneq [1:p]$, we can see numerically that the Pick-and-Freeze estimator has a larger variance than the double Monte-Carlo estimator. This finding appears to be difficult to confirm theoretically in the general case. Nevertheless, we can obtain such a theoretical confirmation in a simple, specific example. Let
$\bX\sim \mathcal{N}(0,\mathbf{I}_2)$, $Y=X_1+X_2$. Remark that, in this example, the variances of $\widehat{W}_u^{(1)}$, $\emptyset \varsubsetneq u \varsubsetneq [1:p]$, are equal.
In this case, and for $u=\{1\}$, we can easily get $\V(\widehat{V}_{u,PF})=\frac{40}{9} \V(\widehat{E}_{u,MC})$ for the same cost (number of evaluations of $f$), and choosing $N_I=3$ for the double Monte-Carlo estimator. This could be surprising since \cite{janon_asymptotic_2014} proved that some Pick-and-Freeze estimator is asymptotically efficient in the independent case. However, this result and our finding are not contradictory for two reasons: the authors of \cite{janon_asymptotic_2014} estimate the variance of $Y$ so their result does not apply here and the double Monte-Carlo estimator is based on different observations from the Pick-and-Freeze estimator. 
\end{rmk}

To conclude, we improved the already existing algorithm of \cite{song_shapley_2016} (random-permutation $W$-aggregation procedure with double Monte-Carlo) by the estimator given by the subset $W$-aggregation procedure with double Monte-Carlo.

\section{Extension when we observe an i.i.d. sample}\label{section_loi_inconnue}
In Section \ref{section_loi_connue}, we have considered a restricted framework: we assumed that for all $\emptyset \varsubsetneq u \varsubsetneq [1:p]$ and all $\bx_u\in \mathcal{X}_u$, we could generate an i.i.d. sample from the distribution of $\bX_{-u}$ conditionally to $\bX_u=\bx_u$. However, in many cases, we can not generate this sample, as we only observe an i.i.d. sample of $\bX$. In this section, we assume that we only observe an i.i.d. sample $(\bX^{(n)})_{n \in [1:N]}$ of $\bX$ and that we have access to the computer code $f$. We extend the double Monte-Carlo and Pick-and-Freeze estimators in this general case and show their consistency and rates of convergence. We then give the consistency of the implied estimators of the Shapley effects (obtained from the $W$-aggregation procedures studied previously). To the best of our knowledge, these suggested estimators are the first estimators of Shapley effects in this general framework. We conclude giving numerical experiments.

We choose a very general framework to prove the consistency of the estimators. This framework is given in the following assumption.
\begin{assumption}\label{assum_polish}
For all $i\in [1:p]$, $(\cX_i,d_i)$ is a Polish space with metric $d_i$ and $\bX=(X_1,...,X_p)$ has a density $f_\bX$ with respect to a finite measure $\mu=\bigotimes_{i=1}^p \mu_i$ which is bounded and $\PP_X$-almost everywhere continuous.
\end{assumption}
This assumption is really general. Actually, it enables to have some continuous variables (with the Euclidean distance), some categorical variables in countable ordered or unordered sets and some variables in separable Hilbert spaces (for example $L^2(\R^d)$, for some $d \in \N^*$). The fact that $\bX$ has a continuous density $f_\bX$ with respect to a finite measure $\mu=\bigotimes \mu_i$ means that the distribution of $\bX$ is smooth. Assumption \ref{assum_polish} is satisfied in many realistic cases. The assumption of a bounded density which is $\PP_\bX$-almost everywhere continuous may be less realistic in some cases but is needed in the proofs. It would be interesting to alleviate it in future work.

To prove rates of convergence, we will need the following stronger assumption.

\begin{assumption}\label{assum_rate}
The function $f$ is $\mathcal{C}^1$, $\cX$ is compact in $\R^p$, $\bX$ has a density $f_\bX$ with respect to the Lebesgue measure $\lambda_p$ on $\mathcal{X}$ such that $\lambda_p$-a.s. on $\mathcal{X}$, we have $0<C_{\inf}\leq f_\bX \leq C_{\sup}<+\infty$. Furthermore, $f_\bX$ is Lipschitz continuous on $\mathcal{X}$.
\end{assumption}

Assumption \ref{assum_rate} is more restrictive than Assumption \ref{assum_polish}. It requires all the input variables to be continuous and real-valued. Moreover, their values are restricted to a compact set where the density is lower-bounded. Assumption \ref{assum_rate} will be satisfied in some realistic cases (for instance with uniform or truncated Gaussian input random variables). Nevertheless, there also exist realistic cases where the input density is not lower-bounded (for instance with triangular input random variables). We remark that the assumption of a lower-bounded density is common in the field of non-parametric statistics \cite{ghosal_convergence_2001}. Here, it enables us to control the order of magnitude of conditional densities.

\subsection{Estimators of the conditional elements}\label{section_Wu_inconnue}
As far as we know, only \cite{veiga_efficient_2013} suggests a consistent estimator of $W_u$ when we only observe an i.i.d. sample and when the input variables can be dependent, but only for $V_u$ with $|u|=1$.  The estimator suggested in \cite{veiga_efficient_2013} is asymptotically efficient but the fact that $u$ has to be a singleton prevents us to use this estimator for the Shapley effects (because we have to estimate $W_u$ for all $\emptyset \varsubsetneq u \varsubsetneq [1:p]$). We can find another estimator of the $(V_u)_{u\subset [1:p]}$ in \cite{plischke_effective_2010} (but no theoretical results on the convergence are given). Finally, note that \cite{plischke_global_2013} provides an estimator of different sensitivity indices, with convergence proofs.

In this section we introduce two consistent estimators of $(W_u)_{\emptyset \varsubsetneq u \varsubsetneq [1:p]}$ when we observe only an i.i.d. sample of $\bX$, and which are easy to implement. These two estimators follow the principle of the double Monte-Carlo and Pick-and-Freeze estimators, but replacing exact samples from the conditional distributions by approximate ones based on nearest-neighbours methods.

To that end, we have to introduce notation. Let $N\in \N$ and $(\bX^{(n)})_{n\in [1:N]}$ be an i.i.d. sample of $\bX$. If $\emptyset \varsubsetneq v\subsetneq [1:p]$, let us write $k_N^{v}(l,n)$ for the index such that $\bX_{v}^{(k_N^{v}(l,n))}$ is the (or one of the) $n$-th closest element to $\bX_{v}^{(l)}$ in $(\bX_{v}^{(i)})_{i\in [1:N]}$, and such that $(k^{v}_N(l,n))_{n \in [1:N_I]}$ are two by two distinct.

The index $k_N^{v}(l,n)$ could be not uniquely defined if there exist different observations $\bX_{v}^{(i)}$ at equal distance from $\bX_{v}^{(l)}$. In this case, we will choose $k_N^{v}(l,n)$ uniformly over the indices of these observations, with the following independence assumption.
\begin{assumption}\label{assum_indep}
Conditionally to $(\bX_{v}^{(n)})_{n \in [1:N]}$, $k^{v}_N(l,i)$ is randomly and uniformly chosen over the indices of all the $i$-th nearest neighbours of $\bX_{v}^{(l)}$ in $(\bX_{v}^{(n)})_{n\in [1:N]}$ and the $(k^{v}_N(l,i))_{i [1:N_I]}$ are two by two distinct. Furthermore, conditionally to $(\bX_{v}{(n)})_{n\in [1 : N]}$, for all $l \in [1 :N]$, the random vector $(k_N(l,i))_{i\in [1 :N_I]}$ is independent on all the other random variables.
\end{assumption}
To summarize the idea of Assumption \ref{assum_indep}, we can say that the nearest neighbours of $\bX_v^{(l)}$ are chosen uniformly among the possible choices and independently on the other variables. Assumption \ref{assum_indep} actually only formalizes the random choice of the nearest neighbours where there can be equalities of the distances and this choice is easy to implement in practice.

When $\bX_{v}$ is absolutely continuous with respect to the Lebesgue measure, distance equalities can not happen and $k_N^{v}(l,n)$ is uniquely defined. Thus, Assumption \ref{assum_indep} trivially holds in this case. Assumption \ref{assum_indep} is thus specific to the case where some input variables are not continuous.

\subsubsection{Double Monte-Carlo}\label{section_MC_inconnue}

We write $(s(l))_{l \in [1:N_u]}$ a sample of uniformly distributed integers in $[1:N]$ (with or without replacement) independent of the other random variables. Then, we define two slightly different versions of the double Monte-Carlo estimator by
\begin{equation}\label{eq_VuMC_inconnue1}
\widehat{E}_{u,MC}^{mix}=\frac{1}{N_u}\sum_{l=1}^{N_u}\widehat{E}_{u,s(l),MC}^{mix},
\end{equation}
and
\begin{equation}\label{eq_VuMC_inconnue2}
\widehat{E}_{u,MC}^{knn}=\frac{1}{N_u}\sum_{l=1}^{N_u}\widehat{E}_{u,s(l),MC}^{knn},
\end{equation}
with
\begin{equation}\label{eq_Vui_MCmix}
\widehat{E}_{u,s(l),MC}^{mix}=\frac{1}{N_I-1}\sum_{i=1}^{N_I}\left[ f\left(\bX_{-u}^{(s(l))},\bX_{u}^{(k_N^{-u}(s(l),i))}\right)-\frac{1}{N_I}\sum_{h=1}^{N_I} f\left(\bX_{-u}^{(s(l))},\bX_{u}^{(k_N^{-u}(s(l),h))}\right) \right]^2
\end{equation}
and
\begin{equation}
\label{eq_Vui_MCknn}
\widehat{E}_{u,s(l),MC}^{knn}=\frac{1}{N_I-1}\sum_{i=1}^{N_I}\left[ f\left(\bX^{(k_N^{-u}(s(l),i))}\right)-\frac{1}{N_I}\sum_{h=1}^{N_I} f\left(\bX^{(k_N^{-u}(s(l),h))}\right) \right]^2.
\end{equation}

The double Monte-Carlo estimator has two sums: one of size $N_I$ for the conditional variance, one other of size $N_u$ for the expectation. The integer $N_I$ is also the number of nearest neighbours and it is a fixed parameter to choose. For example, we can choose $N_I=3$ (as in the case where the conditional samples are available).

\begin{rmk}
If we observe the sample $(\bX^{(n)})_{n\in [1:N]}$ and if the values of $(f(\bX^{(n)}))_{n\in [1:N]}$ have to be assessed, the cost of the estimators $\widehat{E}_{u,MC}^{mix}$ and $\widehat{E}_{u,MC}^{knn}$ remains the number of evaluations of $f$ (which is $N_I N_u$). If we observe the sample $(\bX^{(n)},f(\bX^{(n)}))_{n\in [1:N]}$, the estimator $\widehat{E}_{u,MC}^{knn}$ does not require evaluations of $f$ but the cost remains proportional to $N_u$ (for the search of the nearest neighbours and for the elementary operations).
\end{rmk}

\begin{rmk}
The integer $N$ is the size of the sample of $\bX$ (that enables us to estimate implicitly its conditional distributions through the nearest neighbours) and the integer $N_u$ is the accuracy of the estimator $\widehat{E}_{u,MC}$ from the estimated distribution of $\bX$. Of course, it would be intuitive to take $N_u=N$ and $(s(l))_{l\in [1:N]}=(l)_{l\in [1:N]}$, but this framework would not be general enough for the subset $W$-aggregation procedure (in which the accuracy $N_u$ of $\widehat{E}_{u,MC}$ depends on $u$) and for the proof of the consistency when using the random-permutation $W$-aggregation procedure in Section \ref{section_consis_shap_echan}. Furthermore, we may typically have to take $N_u$ smaller than $N$.
\end{rmk}

Remark that we give two versions of the double Monte-Carlo estimator. The "mix" version seems more accurate but requires to call the computer code of $f$ at new inputs. For the "knn" version, it is sufficient to have an i.i.d. sample $(\bX^{(n)},f(\bX^{(n)}))_{n\in [1:N]}$.

Now that we defined these two versions of the double Monte-Carlo estimator for an unknown input distribution, we give the consistency of these estimators in Theorem \ref{thrm_consis_MC}. We let $\widehat{E}_{u,MC}$ be given by Equation \eqref{eq_VuMC_inconnue1} or Equation \eqref{eq_VuMC_inconnue2}. In the asymptotic results below, $N_I$ is fixed and $N$ and $N_u$ go to infinity.

\begin{theorem}\label{thrm_consis_MC}
Assume that Assumption \ref{assum_polish} holds and Assumption \ref{assum_indep} holds for $v=-u$. If $f$ is bounded, then $\widehat{E}_{u,MC}$ converges to $E_u$ in probability when $N$ and $N_u$ go to $+\infty$.
\end{theorem}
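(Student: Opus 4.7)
The plan is to decompose the error of $\widehat{E}_{u,MC}$ into the Monte Carlo noise from the random choice of centers $(s(l))_{l\in [1:N_u]}$ and the intrinsic nearest-neighbor approximation error. Let $\widehat{E}_{u,n,MC}$ denote the summand defined by Equation \eqref{eq_Vui_MCmix} or \eqref{eq_Vui_MCknn} with the index $s(l)$ replaced by a fixed $n\in [1:N]$, and set $T_N := \frac{1}{N}\sum_{n=1}^N \widehat{E}_{u,n,MC}$. Then
$$
\widehat{E}_{u,MC} - E_u \;=\; \bigl(\widehat{E}_{u,MC} - T_N\bigr) + \bigl(T_N - E_u\bigr),
$$
and I would control each term separately.

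For the first term I would condition on the data $(\bX^{(n)})_{n\in [1:N]}$ together with the nearest-neighbor indices (which are determined by the data up to the measurable tie-breaking provided by Assumption \ref{assum_indep}). Conditionally, the $(\widehat{E}_{u,s(l),MC})_{l\in [1:N_u]}$ are identically distributed with mean $T_N$, and boundedness of $f$ yields a uniform bound $|\widehat{E}_{u,s(l),MC}|\leq C$. Chebyshev's inequality applied conditionally then gives $\widehat{E}_{u,MC}-T_N\to 0$ in probability as $N_u\to\infty$, uniformly in $N$.

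The heart of the proof is $T_N - E_u \to 0$. By exchangeability of the sample, $\E[T_N]=\E[\widehat{E}_{u,1,MC}]$. Under Assumption \ref{assum_polish} the classical nearest-neighbor consistency in Polish spaces gives $d_{-u}(\bX_{-u}^{(k_N^{-u}(1,i))}, \bX_{-u}^{(1)})\to 0$ almost surely for each fixed $i$. Using the bounded, $\PP_\bX$-a.e.\ continuous density $f_\bX$ together with a Lebesgue-differentiation argument, I would establish that, conditionally on $\bX^{(1)}$, the joint law of $(\bX_u^{(k_N^{-u}(1,i))})_{i=1}^{N_I}$ converges weakly to the product of $N_I$ copies of $\PP_{\bX_u \mid \bX_{-u}=\bX_{-u}^{(1)}}$. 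Since $f$ is bounded, dominated convergence together with continuity of the sample-variance functional gives $\E[\widehat{E}_{u,1,MC}]\to \E[\V(Y \mid \bX_{-u})]=E_u$; in the knn version one additionally uses $\bX_{-u}^{(k_N^{-u}(1,i))}\to\bX_{-u}^{(1)}$ to show that replacing $(\bX_{-u}^{(1)},\bX_u^{(k)})$ by $\bX^{(k)}$ contributes a vanishing bias. For the variance, boundedness gives $\V(\widehat{E}_{u,1,MC})=O(1)$ and, with probability tending to one, the nearest-neighbor sets of $\bX^{(1)}$ and $\bX^{(2)}$ become disjoint, so by exchangeability of the remaining observations $\Cov(\widehat{E}_{u,1,MC},\widehat{E}_{u,2,MC})\to 0$. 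Expanding $\V(T_N)=\frac{1}{N}\V(\widehat{E}_{u,1,MC})+\frac{N-1}{N}\Cov(\widehat{E}_{u,1,MC},\widehat{E}_{u,2,MC})$ then yields $\V(T_N)\to 0$ and hence $T_N\to E_u$ in $L^2$.

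The main obstacle is the weak-convergence step above: establishing that the joint conditional law of the $u$-components of the $N_I$ nearest neighbors of $\bX^{(1)}$ in the $-u$-metric converges to the product of $N_I$ copies of $\PP_{\bX_u\mid\bX_{-u}=\bX_{-u}^{(1)}}$. Assumption \ref{assum_polish} is exactly what is required here: Polishness yields a well-behaved notion of nearest neighbors, and the regularity of the density (bounded and a.e.\ continuous with respect to a product reference measure) supplies the Lebesgue-differentiation control of conditional distributions on small $-u$-balls. Assumption \ref{assum_indep} then handles atomic components of the marginals without destroying measurability or the independence structure needed in the Chebyshev step.
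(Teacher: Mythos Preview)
Your proposal is correct and follows essentially the same route as the paper: both reduce to (i) bias convergence $\E[\widehat{E}_{u,1}]\to E_u$ via weak convergence of the conditional law of the neighbors' $u$-components, (ii) boundedness of $\V(\widehat{E}_{u,1})$, and (iii) $\Cov(\widehat{E}_{u,1},\widehat{E}_{u,2})\to 0$, combined through Chebyshev; your insertion of the full-sample average $T_N$ is just a minor reorganization of the paper's direct bound on $\V(\widehat{E}_u)$. Two points where the paper is more careful than your sketch: the continuity of $\bx_{-u}\mapsto\mathcal{L}(\bX_u\mid\bX_{-u}=\bx_{-u})$ is obtained straight from a.e.\ continuity of $f_\bX$ (no Lebesgue differentiation is available in a general Polish space), and for (iii) the paper uses the law of total covariance conditioning on $(\bX_{-u}^{(1)},\bX_{-u}^{(2)})$---treating separately the atomic case $\bX_{-u}^{(1)}=\bX_{-u}^{(2)}$---because disjointness of the two nearest-neighbor index sets does not by itself yield asymptotic independence, as each set is selected using the entire sample.
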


Furthermore, with additional regularity assumptions, we can give the rate of convergence of these estimators in Theorem \ref{thrm_vitesse_MC} and Corollary\ref{corol_MC}.
 
\begin{theorem}\label{thrm_vitesse_MC}
 Under Assumption \ref{assum_rate}, for all $\varepsilon>0$, $\varepsilon'>0$, there exist fixed constants $C_{\sup}^{(1)}(\varepsilon')$ and $C_{\sup}^{(2)}$ such that
\begin{equation}
\PP\left( \left| \widehat{E}_{u,MC}-E_{u} \right| >\varepsilon \right) \leq \frac{1}{\varepsilon^2}\left( \frac{C_{\sup}^{(1)}(\varepsilon')}{N^{\frac{1}{p-|u|}-\varepsilon'}}+\frac{C_{\sup}^{(2)}}{N_u} \right) .
\end{equation}
\end{theorem}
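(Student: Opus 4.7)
My plan is to apply Chebyshev's inequality and then disentangle the two sources of randomness---the Monte-Carlo indices $s(1),\dots,s(N_u)$ and the underlying sample $(\bX^{(n)})_{n\in[1:N]}$---by conditioning on $\mathcal{F}_N := \sigma(\bX^{(1)},\dots,\bX^{(N)})$. Writing $T_N := \E[\widehat{E}_{u,MC}\mid\mathcal{F}_N]$, the law of total variance combined with Chebyshev yields
$\PP(|\widehat{E}_{u,MC}-E_u|>\varepsilon) \leq \varepsilon^{-2}\bigl(\E[\V(\widehat{E}_{u,MC}\mid\mathcal{F}_N)] + \E[(T_N - E_u)^2]\bigr)$,
and it suffices to bound the two summands, which will produce the $C_{\sup}^{(2)}/N_u$ and $C_{\sup}^{(1)}(\varepsilon')/N^{1/(p-|u|)-\varepsilon'}$ contributions respectively.

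The Monte-Carlo term is immediate: conditional on $\mathcal{F}_N$, the $\widehat{E}_{u,s(l),MC}$ are i.i.d.\ (since the $s(l)$ are) and bounded by a constant multiple of $\|f\|_\infty^2$, because under Assumption~\ref{assum_rate} $f$ is continuous on the compact $\cX$. Hence $\V(\widehat{E}_{u,MC}\mid\mathcal{F}_N)\leq C/N_u$ almost surely, for some constant $C$ depending only on $\|f\|_\infty$ and $N_I$.

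For the data-driven piece, observe that $T_N = N^{-1}\sum_{l=1}^{N}\widehat{E}_{u,l,MC}$ is a symmetric function of the i.i.d.\ sample, so writing $\mu_N := \E[\widehat{E}_{u,1,MC}]$ we have $\E[(T_N - E_u)^2] = (\mu_N - E_u)^2 + \V(T_N)$. For the bias, Assumption~\ref{assum_rate} puts $\bX_{-u}$ on a compact set in $\R^{p-|u|}$ with a density bounded above and below, so classical $k$-nearest-neighbour bounds give $\E[\|\bX_{-u}^{(k_N^{-u}(1,i))} - \bX_{-u}^{(1)}\|] = O(N^{-1/(p-|u|)})$ together with exponential tail bounds for the event that a nearest-neighbour distance exceeds $t\,N^{-1/(p-|u|)}$. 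Combining these with the Lipschitz continuity of $f$ (from $f\in\mathcal{C}^1(\cX)$) and with a Lipschitz bound on $\bx_{-u}\mapsto f_{\bX_u\mid\bX_{-u}}(\cdot\mid\bx_{-u})$, which is deduced from the Lipschitz regularity and lower-boundedness of $f_\bX$, one can compare the law of $\bX_u^{(k_N^{-u}(1,i))}$ conditional on $\bX_{-u}^{(1)}$ with the target $\PP_{\bX_u\mid\bX_{-u}=\bX_{-u}^{(1)}}$ and conclude $|\mu_N - E_u| = O(N^{-1/(p-|u|)+\varepsilon'/2})$. For $\V(T_N)$, an Efron--Stein argument works: each $\widehat{E}_{u,l,MC}$ depends only on $\bX^{(l)}$ and on at most $N_I$ of its nearest neighbours in the $-u$-coordinates, and by Stone's geometric lemma each sample point is among the $N_I$ nearest neighbours of at most $c_{p-|u|,N_I}$ other sample points, so modifying one $\bX^{(n)}$ changes $T_N$ by $O(1/N)$, giving $\V(T_N)=O(1/N)$. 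Both contributions are absorbed into the stated $O(N^{-1/(p-|u|)+\varepsilon'})$ bound.

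The main obstacle will be the bias estimate. The $k$-nearest neighbours are selected in a data-dependent manner, so one cannot treat the $\bX_u^{(k_N^{-u}(1,i))}$ as exact samples from $\PP_{\bX_u\mid\bX_{-u}=\bX_{-u}^{(1)}}$; instead one must quantify---using the Lipschitz continuity and positivity of $f_\bX$---how the conditional law of $\bX_u$ given $\bX_{-u}=\bx_{-u}$ varies when $\bx_{-u}$ is perturbed by $O(N^{-1/(p-|u|)})$, and then transfer this control through $f$ into the expectation of $\widehat{E}_{u,1,MC}$. The small slack $\varepsilon'$ is typical in $k$-NN analyses and absorbs a union bound over the rare event that some nearest-neighbour distance is abnormally large; the constant $C_{\sup}^{(1)}(\varepsilon')$ is allowed to blow up as $\varepsilon'\downarrow 0$.
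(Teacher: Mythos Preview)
Your proposal is correct and the bias part is essentially the same as the paper's: both control $|\mu_N-E_u|$ by combining an expected $k$-NN distance bound of order $N^{-1/(p-|u|)+\varepsilon'}$ (the paper's Lemma corresponding to your ``classical $k$-NN bounds'') with Lipschitz continuity of $\bx_{-u}\mapsto f_{\bX_u\mid\bX_{-u}}(\cdot\mid\bx_{-u})$, exactly as you outline.

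Where you differ is the variance term. The paper does \emph{not} condition on $\mathcal{F}_N$; instead it expands $\V(\widehat{E}_{u,MC})$ in terms of $\cov(\widehat{E}_{u,1},\widehat{E}_{u,2})$ and then proves directly that this covariance is $O(N^{-1/(p-|u|)+\varepsilon'})$. That direct covariance bound is by far the longest part of the paper's argument: it uses the law of total covariance conditionally on $(\bX_{-u}^{(1)},\bX_{-u}^{(2)})$, introduces shrinking balls $B_1,B_2$ around these two points, and carries out a delicate comparison of the joint law of the counts $(N_1,N_2)$ of sample points in $B_1,B_2$ against the product of their marginals, including asymptotics for ratios of binomial probabilities. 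Your route---condition on $\mathcal{F}_N$, write $T_N=N^{-1}\sum_{l}\widehat{E}_{u,l}$, and bound $\V(T_N)$ by Efron--Stein combined with Stone's lemma on the bounded in-degree of a $k$-NN graph---bypasses all of this and in fact yields the stronger bound $\V(T_N)=O(1/N)$, which is then absorbed into $O(N^{-1/(p-|u|)+\varepsilon'})$. A small caveat: your claim that the $\widehat{E}_{u,s(l)}$ are conditionally i.i.d.\ assumes sampling with replacement; the paper also allows sampling without replacement, but there the conditional covariances are nonpositive and the same $C/N_u$ bound holds.
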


\begin{corol}\label{corol_MC}
Under Assumption \ref{assum_rate}, choosing $N_u\geq C N^{1\slash(p- |u|)}$ for some fixed $0<C < +\infty$, we have for all $\delta>0$, 
$$
\left| \widehat{E}_{u,MC}-E_{u} \right|=o_p\left( \frac{1}{N^{\frac{1}{2(p-|u|)}-\delta}}\right).
$$
\end{corol}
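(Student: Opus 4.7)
The strategy is to bootstrap Corollary \ref{corol_MC} directly from Theorem \ref{thrm_vitesse_MC} by a rescaled Markov-style bound, choosing the free parameter $\varepsilon'$ in the theorem so that the $N$-dependent exponent beats the rescaling $a_N := N^{1/(2(p-|u|)) - \delta}$.

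First, I would fix an arbitrary $\delta > 0$ and any $\eta > 0$, and aim to show that $\PP( a_N | \widehat{E}_{u,MC} - E_u| > \eta) \to 0$ as $N \to \infty$; by definition of $o_p$ this is exactly the claim. The plan is to apply Theorem \ref{thrm_vitesse_MC} with $\varepsilon = \eta / a_N$ and with the parameter $\varepsilon'$ chosen small enough in terms of $\delta$. Concretely, I would take $\varepsilon' = \delta$ (or any $\varepsilon' \in (0, 2\delta)$); then $C_{\sup}^{(1)}(\varepsilon')$ is a fixed finite constant, and the theorem yields
\begin{equation*}
\PP\bigl( | \widehat{E}_{u,MC} - E_u| > \eta/a_N \bigr) \;\leq\; \frac{a_N^2}{\eta^2}\left( \frac{C_{\sup}^{(1)}(\delta)}{N^{\frac{1}{p-|u|} - \delta}} + \frac{C_{\sup}^{(2)}}{N_u}\right).
\end{equation*}

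Next, I would analyze the two terms on the right-hand side separately. For the first term, substituting $a_N^2 = N^{1/(p-|u|) - 2\delta}$ gives
\begin{equation*}
\frac{a_N^2}{N^{\frac{1}{p-|u|} - \delta}} \;=\; N^{\frac{1}{p-|u|} - 2\delta - \frac{1}{p-|u|} + \delta} \;=\; N^{-\delta} \;\longrightarrow\; 0.
\end{equation*}
For the second term, using the hypothesis $N_u \geq C N^{1/(p-|u|)}$ yields
\begin{equation*}
\frac{a_N^2}{N_u} \;\leq\; \frac{N^{\frac{1}{p-|u|} - 2\delta}}{C\, N^{\frac{1}{p-|u|}}} \;=\; \frac{1}{C\, N^{2\delta}} \;\longrightarrow\; 0.
\end{equation*}

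Combining these two estimates shows that $\PP(a_N |\widehat{E}_{u,MC} - E_u| > \eta) \to 0$ for every $\eta > 0$, which is the definition of $a_N |\widehat{E}_{u,MC} - E_u| = o_p(1)$, equivalently $|\widehat{E}_{u,MC} - E_u| = o_p(1/a_N)$. There is essentially no obstacle here: the content is entirely in Theorem \ref{thrm_vitesse_MC}, and the corollary is a routine rescaling exercise. The only point requiring attention is that $\delta$ must be chosen before $\varepsilon'$ (not the reverse), so that the constant $C_{\sup}^{(1)}(\varepsilon')$ is fixed once and for all and does not interact with the limit $N \to \infty$; picking $\varepsilon' = \delta$ (or any strictly smaller positive number) handles this cleanly.
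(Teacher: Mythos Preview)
Your proof is correct and follows essentially the same approach as the paper: fix $\delta$, apply Theorem \ref{thrm_vitesse_MC} with $\varepsilon' = \delta$ and with $\varepsilon$ replaced by $\eta/a_N$, then check that the resulting bound is $O(N^{-\delta})$. Your write-up is in fact more careful than the paper's, which only displays the first of the two terms explicitly and leaves the $N_u$-term (handled by the hypothesis $N_u \geq C N^{1/(p-|u|)}$) implicit.
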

We remark that for $|u|=p-1$, we nearly obtain a parametric rate of convergence $N^{\frac{1}{2}}$. The rate of convergence decreases when $|u|$ decreases which can be interpreted by the fact that we estimate non-parametrically the function $\bx_{-u}\mapsto\V(f(\bX)|\bX_{-u}=\bx_{-u})$. The estimation problem is higher-dimensional when $|u|$ decreases.

\subsubsection{Pick-and-Freeze}\label{section_PF_inconnue}
We now give similar results for the Pick-and-Freeze estimators. The number $N_I$ of nearest neighbours  that we need for the Pick-and-Freeze estimators is equal to $2$.
Assume that $\E(Y)$ is known and let $(s(l))_{l\in [1:N_u]}$ be as in Section \ref{section_MC_inconnue}. Then, we define two slightly different versions of the Pick-and-Freeze estimator by
\begin{equation}\label{eq_VuPF_inconnue1}
\widehat{V}_{u,PF}^{mix}=\frac{1}{N_u}\sum_{l=1}^{N_u}\widehat{V}_{u,s(l),PF}^{mix},
\end{equation}
and
\begin{equation}\label{eq_VuPF_inconnue2}
\widehat{V}_{u,PF}^{knn}=\frac{1}{N_u}\sum_{l=1}^{N_u}\widehat{V}_{u,s(l),PF}^{knn},
\end{equation}
with
\begin{equation}\label{eq_Vui_PFmix}
\widehat{V}_{u,s(l),PF}^{mix}=f\left((\bX^{(k_N^u(s(l),1))}\right)f\left(\bX_{u}^{(k_N^u(s(l),1))},\bX_{-u}^{(k_N^u(s(l),2))}\right)-\E(Y)^2
\end{equation}
and
\begin{equation}\label{eq_Vui_PFknn}
\widehat{V}_{u,s(l),PF}^{knn}= f(\bX^{(k_N^u(s(l),1))})f(\bX^{(k_N^u(s(l),2))})-\E(Y)^2.
\end{equation}

As for the double Monte-Carlo estimators, we give the consistency of the Pick-and-Freeze estimators in Theorem \ref{thrm_consis_PF} and the rate of convergence in Theorem \ref{thrm_vitesse_PF} and in Corollary\ref{corol_PF}. We let $\widehat{V}_{u,PF}$ be given by Equation \eqref{eq_VuPF_inconnue1} or Equation \eqref{eq_VuPF_inconnue2}.

\begin{theorem}\label{thrm_consis_PF}
Assume that Assumption \ref{assum_polish} holds and Assumption \ref{assum_indep} holds for $v=u$ and $N_I=2$. If $f$ is bounded, then $\widehat{V}_{u,PF}$ converges to $V_u$ in probability when $N$ and $N_u$ go to $+\infty$.
\end{theorem}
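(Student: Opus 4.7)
The plan is to follow the same two-step strategy one would use for the analogous Theorem \ref{thrm_consis_MC}: split the error $\widehat{V}_{u,PF} - V_u$ into a \emph{bias} term, caused by replacing the ideal conditional samples of $\bX_{-u}$ given $\bX_u$ by nearest-neighbour surrogates, and a \emph{Monte Carlo} term, caused by averaging over only $N_u$ index draws. Using Proposition \ref{prop_paf} I rewrite $V_u = \E(f(\bX) f(\bX^u)) - \E(Y)^2$, so the target is to show that
\[
\frac{1}{N_u} \sum_{l=1}^{N_u} \Bigl[ \widehat{V}_{u,s(l),PF} + \E(Y)^2 \Bigr] \xrightarrow[N,N_u \to \infty]{P} \E(f(\bX) f(\bX^u)).
\]

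The bias step is the core of the proof. The claim is that, under Assumption \ref{assum_polish} and Assumption \ref{assum_indep} applied with $v=u$ and $N_I = 2$, for any bounded measurable $g : \cX \times \cX \to \R$,
\[
\E\bigl[ g\bigl(\bX^{(k_N^u(s(1),1))}, \bX^{(k_N^u(s(1),2))} \bigr) \bigr] \xrightarrow[N \to \infty]{} \E\bigl[ g(\bX, \bX^u) \bigr],
\]
where $(\bX, \bX^u)$ is a Pick-and-Freeze pair in the sense of Proposition \ref{prop_paf}. To obtain this I would argue in two substeps. First, nearest-neighbour distance consistency: since $\bX_u$ has a bounded density with respect to the product measure $\bigotimes_{i \in u} \mu_i$, both $\bX_u^{(k_N^u(s(1),1))}$ and $\bX_u^{(k_N^u(s(1),2))}$ converge in probability to $\bX_u^{(s(1))}$ as $N \to \infty$. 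Second, and more delicately, conditionally on the $u$-coordinates of these two neighbours lying in a shrinking neighbourhood of $\bX_u^{(s(1))}$, the $-u$-coordinates $\bX_{-u}^{(k_N^u(s(1),1))}$ and $\bX_{-u}^{(k_N^u(s(1),2))}$ are asymptotically two \emph{independent} draws from the conditional law $\PP_{\bX_{-u} \mid \bX_u = \bX_u^{(s(1))}}$. This is where the almost-everywhere continuity of $f_\bX$ enters: for typical $\bx_u$, the conditional law on a small ball around $\bx_u$ converges weakly to $\PP_{\bX_{-u} \mid \bX_u = \bx_u}$, and because the two neighbours come from disjoint observations in a large i.i.d.\ sample, their $-u$-components become conditionally independent in the limit. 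Applying this to $g(\bx,\by) = f(\bx) f(\by)$ yields consistency of the knn version, and to $g(\bx,\by) = f(\bx) f(\bx_u, \by_{-u})$ the mix version; boundedness of $f$ and dominated convergence handle the final passage to the limit.

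For the Monte Carlo step, boundedness of $f$ gives a uniform bound $|\widehat{V}_{u,s(l),PF}| \leq K$, so $\V(\widehat{V}_{u,s(l),PF}) \leq K^2$. The summands are not i.i.d.\ because they share the sample $(\bX^{(n)})_{n \in [1:N]}$, but their pairwise covariances tend to zero: with probability tending to $1$, the neighbour indices used for $l \neq l'$ are disjoint, and conditionally on this event the two terms behave like independent copies centred at the same asymptotic mean. A Chebyshev argument applied to $N_u^{-1} \sum_{l} \widehat{V}_{u,s(l),PF}$ then gives convergence in probability, provided $N_u \to \infty$ jointly with $N$.

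The main obstacle is the joint-independence claim in the second substep above. Marginal convergence of a single neighbour's $-u$-coordinate to the conditional law is classical, but showing that the \emph{pair} becomes independent in the limit requires the measure-theoretic argument provided by Assumption \ref{assum_polish}: the density $f_\bX$ against a product reference measure $\mu = \bigotimes \mu_i$ gives the right product structure on shrinking balls, and Assumption \ref{assum_indep} prevents pathologies from ties. I would expect to reuse the technical nearest-neighbour lemma already developed for Theorem \ref{thrm_consis_MC} rather than redo the proof from scratch, since the only new ingredient in the Pick-and-Freeze case is that two nearest neighbours (instead of $N_I$) are now used to form a \emph{product} of function values rather than a sample variance.
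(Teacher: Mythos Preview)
Your proposal is correct and follows essentially the same approach as the paper: the paper unifies the proofs by introducing the bounded functionals $\Phi_{PF}^{mix}$ and $\Phi_{PF}^{knn}$, proves the bias vanishes via nearest-neighbour distance convergence together with weak continuity of $\bx_u \mapsto \mathcal{L}(\bX_{-u}\mid \bX_u=\bx_u)$ and a lemma giving the conditional product structure of the two neighbours, and then controls the Monte Carlo fluctuation by showing $\cov(\widehat{V}_{u,1,PF},\widehat{V}_{u,2,PF})\to 0$ and applying Chebyshev. The only substantive detail your sketch compresses is the covariance argument when $\bX_u$ has atoms (so $\bX_u^{(s(l))}=\bX_u^{(s(l'))}$ with positive probability), which the paper handles separately using the tie-breaking randomization of Assumption~\ref{assum_indep}; your remark that this assumption ``prevents pathologies from ties'' correctly flags where this enters.
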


\begin{theorem}\label{thrm_vitesse_PF}
Under Assumption \ref{assum_rate}, if $|u|=1$, for all $\varepsilon>0$, $\varepsilon'>0$, 
\begin{equation}
\PP\left( \left| \widehat{V}_{u,PF}-V_{u} \right| >\varepsilon \right) \leq \frac{1}{\varepsilon^2}\left( \frac{C_{\sup}^{(1)}(\varepsilon')}{N^{1-\varepsilon'}}+\frac{C_{\sup}^{(2)}}{N_u} \right),
\end{equation}
and if $|u|>1$, for all $\varepsilon>0$, 
\begin{equation}
\PP\left( \left| \widehat{V}_{u,PF}-V_{u} \right| >\varepsilon \right) \leq \frac{C_{\sup}^{(3)}}{\varepsilon^2}\left( \frac{1}{N^{\frac{1}{|u|}}}+\frac{1}{N_u} \right),
\end{equation}
with fixed constants $C_{\sup}^{(1)}(\varepsilon')<+\infty,\;C_{\sup}^{(2)}<+\infty$ and $C_{\sup}^{(3)}<+\infty$.
\end{theorem}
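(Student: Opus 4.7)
I would mirror the proof of Theorem~\ref{thrm_vitesse_MC}, swapping $-u$-nearest neighbours for $u$-nearest neighbours. By Chebyshev's inequality it suffices to bound $\E[(\widehat V_{u,PF}-V_u)^2]$. Setting $\mu_N:=\E[\widehat V_{u,s(1),PF}\mid (\bX^{(n)})_{n=1}^N]$, this MSE splits additively into $\E[(\widehat V_{u,PF}-\mu_N)^2]$ and $\E[(\mu_N-V_u)^2]$ by the tower property, since the two differences are uncorrelated.

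For the first (Monte-Carlo) term, conditioning on the base sample $(\bX^{(n)})_{n=1}^N$ makes the $N_u$ summands $\widehat V_{u,s(l),PF}$ i.i.d.\ (the $s(l)$ and the tie-breaking in Assumption~\ref{assum_indep} are independent of everything else), and each summand is uniformly bounded because $f$ is continuous on the compact $\cX$ under Assumption~\ref{assum_rate}. A standard variance bound for an average of i.i.d.\ bounded variables then gives $\E[(\widehat V_{u,PF}-\mu_N)^2]\le C_{\sup}^{(2)}/N_u$, providing the $1/N_u$ contribution in both bounds, uniformly in $|u|$.

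For the second (sample-error) term, the key observation is that $k_N^u(s(l),1)=s(l)$ almost surely, and that, conditionally on the $\bX_u$-coordinates of the whole sample and on the index $j:=k_N^u(s(l),2)$, the complementary coordinates $\bX_{-u}^{(s(l))}$ and $\bX_{-u}^{(j)}$ are independent, with respective laws $\PP_{\bX_{-u}\mid \bX_u=\bX_u^{(s(l))}}$ and $\PP_{\bX_{-u}\mid \bX_u=\bX_u^{(j)}}$ (the NN selection depends only on the $\bX_u$-coordinates). Integrating these out converts $\mu_N+\E(Y)^2$ into the empirical average $N^{-1}\sum_{l=1}^N g(\bX_u^{(l)})\,g(\bX_u^{(k_N^u(l,2))})$, where $g(x):=\E(Y\mid \bX_u=x)$ is Lipschitz by Assumption~\ref{assum_rate}. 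A first-order Lipschitz estimate $g(\bX_u^{(k_N^u(l,2))})=g(\bX_u^{(l)})+O(\|\bX_u^{(l)}-\bX_u^{(k_N^u(l,2))}\|)$ then controls $|\mu_N-V_u|$ by an empirical-average fluctuation of $g(\bX_u)^2$, of order $N^{-1/2}$, plus the average first moment of the $u$-NN distance. Because $\mu_N-V_u$ is almost surely bounded, one has $\E[(\mu_N-V_u)^2]\le C\,\E|\mu_N-V_u|$, so only an $L^1$ bound on the NN distance is required.

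The main obstacle and key quantitative input is the NN-distance moment bound in dimension $|u|$, which is exactly the input used in the proof of Theorem~\ref{thrm_vitesse_MC} but instantiated in dimension $|u|$ rather than $p-|u|$. Under Assumption~\ref{assum_rate}, compactness of $\cX$ and the lower bound $0<C_{\inf}\le f_\bX$ yield $\E\|\bX_u^{(l)}-\bX_u^{(k_N^u(l,2))}\|=O(N^{-1/|u|})$ for $|u|\ge 2$, giving the $1/N^{1/|u|}$ term of the statement. In the one-dimensional case $|u|=1$, uniformising this bound across base points (in particular near the boundary of $\cX$) forces an arbitrarily small polynomial loss $\varepsilon'$, which produces the $1/N^{1-\varepsilon'}$ rate and the constant $C_{\sup}^{(1)}(\varepsilon')$.
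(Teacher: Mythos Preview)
Your decomposition contains a genuine inconsistency that breaks the argument. You define $\mu_N=\E[\widehat V_{u,s(1),PF}\mid (\bX^{(n)})_{n=1}^N]$ conditionally on the \emph{full} sample, which indeed makes the $N_u$ summands conditionally i.i.d.\ and yields the $1/N_u$ term. But with this definition, $\mu_N+\E(Y)^2=N^{-1}\sum_{l}f(\bX^{(l)})f(\bX^{(k_N^u(l,2))})$ (for the ``knn'' version), which is a function of the whole sample and is \emph{not} equal to $N^{-1}\sum_l g(\bX_u^{(l)})g(\bX_u^{(k_N^u(l,2))})$. The latter is instead $\E[\mu_N\mid (\bX_u^{(n)})_n]+\E(Y)^2$. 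If you redefine $\mu_N$ as conditional on the $\bX_u$-coordinates only, then the summands in the first term share the random $\bX_{-u}^{(n)}$ and are no longer conditionally i.i.d. Either choice forces you back to controlling $\cov(\widehat V_{u,1,PF},\widehat V_{u,2,PF})$, which is precisely what the paper does (the analog of Proposition~\ref{prop_vitesse_cov}) and what your outline sidesteps.

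Relatedly, you misattribute the $\varepsilon'$ loss. For Pick-and-Freeze with $N_I=2$, the nearest-neighbour moment bound (the analog of Lemma~\ref{lm_Ceps}) carries no $N^{N_I}$ prefactor and gives $\E\|\bX_u^{(1)}-\bX_u^{(k_N^u(1,2))}\|=O(N^{-1/|u|})$ \emph{without} any $\varepsilon'$, including at the boundary (see the Remarks after Lemmas~\ref{lm_proba_voisin_loin} and~\ref{lm_Ceps}). The $\varepsilon'$ actually enters in the covariance analysis, through the analog of Lemma~\ref{lm_voisin_xu1_xu2}: one must integrate over pairs $(\bx_u^{(1)},\bx_u^{(2)})$ the probability that the nearest-neighbour sets around them overlap, and this step costs $N^{-(1-\varepsilon')}$ even when $|u|=1$. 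For $|u|\ge 2$ the bias rate $N^{-1/|u|}$ dominates this covariance term, which is why no $\varepsilon'$ appears there. Finally, note that your device $\E[(\mu_N-V_u)^2]\le C\,\E|\mu_N-V_u|$ would, even after the gap is repaired, cap the $|u|=1$ rate at $N^{-1/2}$ (coming from the empirical-mean fluctuation of $g(\bX_u)^2$), which is strictly weaker than the claimed $N^{-(1-\varepsilon')}$.
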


\begin{corol}\label{corol_PF}
Under Assumption \ref{assum_rate}, choosing $N_u\geq C N^{1\slash |u|}$ for some fixed $0<C < +\infty$, we have
\begin{enumerate}
\item for all $u$ such that $|u|=1$, for all $\delta>0$, 
$$
\left| \widehat{V}_{u,PF}-V_{u} \right|=o_p\left( \frac{1}{N^{\frac{1}{2}-\delta}}\right).
$$
\item for all $u$ such that $|u|>1$, 
$$
\left| \widehat{V}_{u,PF}-V_{u} \right|=O_p\left( \frac{1}{N^{\frac{1}{2|u|}}}\right).
$$
\end{enumerate}
\end{corol}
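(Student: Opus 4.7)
The plan is to derive both statements directly from the tail bounds provided by Theorem \ref{thrm_vitesse_PF}, by substituting the threshold $\varepsilon$ with a shrinking quantity of the appropriate order and then using the chosen accuracy $N_u \ge C N^{1/|u|}$ to simplify the bound. No further probabilistic machinery is needed: the heavy lifting is already done inside Theorem \ref{thrm_vitesse_PF}, so the corollary is essentially a book-keeping exercise on exponents.

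\textbf{Case $|u|=1$.} To establish $|\widehat{V}_{u,PF}-V_u|=o_p(N^{-(1/2-\delta)})$, fix an arbitrary $\varepsilon>0$ and write
\[
\PP\!\left(N^{1/2-\delta}\bigl|\widehat{V}_{u,PF}-V_u\bigr|>\varepsilon\right)
=\PP\!\left(\bigl|\widehat{V}_{u,PF}-V_u\bigr|>\varepsilon N^{-(1/2-\delta)}\right).
\]
I then apply Theorem \ref{thrm_vitesse_PF} with this threshold and with the free parameter $\varepsilon'$ chosen as $\varepsilon'=\delta$ (a valid choice because $\delta$ is fixed in the corollary, so $C_{\sup}^{(1)}(\varepsilon')$ becomes a fixed finite constant). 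Plugging in $N_u\ge C N$ yields a bound of the form
\[
\frac{N^{1-2\delta}}{\varepsilon^2}\!\left(\frac{C_{\sup}^{(1)}(\delta)}{N^{1-\delta}}+\frac{C_{\sup}^{(2)}}{CN}\right)=\frac{C_{\sup}^{(1)}(\delta)}{\varepsilon^2 N^{\delta}}+\frac{C_{\sup}^{(2)}}{C\varepsilon^2 N^{2\delta}},
\]
both terms of which tend to $0$ as $N\to\infty$, giving the required $o_p$ statement.

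\textbf{Case $|u|>1$.} To obtain $|\widehat{V}_{u,PF}-V_u|=O_p(N^{-1/(2|u|)})$, fix $\eta>0$ and, for $K>0$ to be chosen, estimate
\[
\PP\!\left(N^{1/(2|u|)}\bigl|\widehat{V}_{u,PF}-V_u\bigr|>K\right)
\le \frac{C_{\sup}^{(3)} N^{1/|u|}}{K^{2}}\!\left(\frac{1}{N^{1/|u|}}+\frac{1}{N_u}\right)
\]
by Theorem \ref{thrm_vitesse_PF}. Using $N_u\ge C N^{1/|u|}$, the right-hand side is bounded by $C_{\sup}^{(3)}(1+C^{-1})/K^{2}$, which is independent of $N$ and can be made smaller than $\eta$ by choosing $K$ sufficiently large. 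This is exactly the definition of tightness, hence the $O_p$ statement.

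\textbf{Expected difficulty.} Since the quantitative content is entirely absorbed into Theorem \ref{thrm_vitesse_PF}, there is no real obstacle. The only minor point to watch is that in Case 1 the constant in the tail bound depends on the free exponent $\varepsilon'$, so one must tie $\varepsilon'$ to $\delta$ \emph{before} taking the limit in $N$, rather than taking $\varepsilon'\to 0$ together with $N\to\infty$. Fixing $\varepsilon'=\delta$ (or any value in $(0,2\delta)$) at the outset handles this cleanly and makes all constants uniform in $N$.
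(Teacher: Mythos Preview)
Your proof is correct and follows essentially the same approach as the paper: apply Theorem~\ref{thrm_vitesse_PF} with the threshold rescaled by the target rate and, in the $|u|=1$ case, set $\varepsilon'=\delta$. The paper in fact only writes out the analogous argument for Corollary~\ref{corol_MC} and states that Corollary~\ref{corol_PF} ``uses the same idea''; your treatment of the $|u|>1$ case via the standard tightness characterization of $O_p$ is the natural completion that the paper leaves implicit.
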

The interpretation of the rates of convergence is the same as for the double Monte-Carlo estimators.
\subsection{Consistency of the Shapley effect estimators}\label{section_consis_shap_echan}
Now that we have constructed estimators of $W_u$ with an unknown input distribution, we can obtain estimators of the Shapley effects using the subset and random-permutation $W$-aggregation procedures. Note that for each $W$-aggregation procedure, we need to choose the accuracy $N_u$ of the $(\widehat{W}_u)_{\emptyset \varsubsetneq u \varsubsetneq [1:p]}$. Although Assumption \ref{hypNu} does not hold with the estimators $\widehat{E}_{u,MC}$ and $\widehat{V}_{u,PF}$ (the summands of these estimators are not independent), we keep choosing $N_u=N_O=1$ for the random-permutation $W$-aggregation procedure and $N_u$ as the closest integer to $N_{tot} \kappa^{-1}\begin{pmatrix}
p\\ |u|
\end{pmatrix}^{-1} (p-1)^{-1}$ with the subset $W$-aggregation procedure. To unify notation, let $N_I=2$ when the estimators of the conditional elements $(W_u)_{\emptyset \varsubsetneq u \varsubsetneq [1:p]}$ are the Pick-and-freeze estimators (in this way, $N_I$ is the number of nearest neighbours). With the double Monte-Carlo estimators, let $N_I$ be a fixed integer (for example $N_I=3$). 

Finally, recall that for the random-permutation $W$-aggregation procedure, we need different estimators $(\widehat{W}_u(m))_{m \in [1:M]}=(\widehat{W}_u(m)^{(1)})_{m \in [1:M]}$ of $W_u$, with the notation of Assumption \ref{assum_cost_random}. In this case, we choose i.i.d. realizations of $\widehat{W}_u$ conditionally to $(\bX^{(n)})_{n \in [1:N]}$. That is $(\widehat{W}_u(m))_{m \in [1:M]}= \left( \widehat{W}_{u,s(m)}\right)_{m \in [1:M]}$, where $\widehat{W}_{u,s(m)}$ is defined by either Equation \eqref{eq_Vui_MCmix}, Equation \eqref{eq_Vui_MCknn}, Equation \eqref{eq_Vui_PFmix} or Equation \eqref{eq_Vui_PFknn}, and $(s(m))_{m \in [1:M]}$ are independent and uniformly distributed on $[1:N]$. This enables to have different estimators with a small covariance using the same sample $(\bX^{(n)})_{n \in [1:N]}$. Indeed, to prove the consistency in Proposition \ref{prop_consis_shap_inconnue} of the Shapley effects estimator with the random-permutation procedure, we show that Assumption \ref{assum_consis_perm} is satisfied.

\begin{prop}\label{prop_consis_shap_inconnue}
Assume that $Assumption \ref{assum_polish}$ holds and Assumption \ref{assum_indep} holds for all subset $u$, $\emptyset \varsubsetneq u \varsubsetneq [1:p]$.
If $f$ is bounded, then the estimators of the Shapley effects defined by the random-permutation $W$-aggregation procedure or the subset $W$-aggregation procedure combined with $\widehat{W_u}=\widehat{E}_{u,MC}$ (resp. $\widehat{W}_u=\widehat{V}_{u,PF}$) converge to the Shapley effects in probability when $N$ and $N_{tot}$ go to $+\infty$.
\end{prop}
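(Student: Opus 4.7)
The plan is to split by $W$-aggregation procedure and reduce each case to the corresponding general consistency result already established: Proposition \ref{prop_consistance_shapley1} for the subset procedure and the second part of Proposition \ref{prop_consistance_random-permutation $W$-aggregation procedure} for the random-permutation procedure. The only new input is the consistency of each $\widehat{W}_u$ provided by Theorem \ref{thrm_consis_MC} or Theorem \ref{thrm_consis_PF}, combined with the boundedness of $f$, which upgrades in-probability convergence to $L^2$ convergence throughout.

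For the subset procedure, Proposition \ref{prop_consistance_shapley1} only requires $\widehat{W}_u \to W_u$ in probability for each $\emptyset \varsubsetneq u \varsubsetneq [1:p]$. Under the choice in \eqref{eq_N_u^*}, $N_u^*$ is proportional to $N_{tot}$, hence $N_u \to \infty$ whenever $N_{tot} \to \infty$. Since Assumption \ref{assum_polish} holds and Assumption \ref{assum_indep} is posited for all non-trivial subsets (thus for $v=-u$ and $v=u$), Theorem \ref{thrm_consis_MC} (for double Monte-Carlo) and Theorem \ref{thrm_consis_PF} (for Pick-and-Freeze) give the required convergence as $N,N_{tot}\to \infty$, and the conclusion follows.

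For the random-permutation procedure, the plan is to verify Assumption \ref{assum_consis_perm} and invoke the second part of Proposition \ref{prop_consistance_random-permutation $W$-aggregation procedure}. Recall that $\widehat{W}_u(m) = \widehat{W}_{u,s(m)}$ with $(s(m))_{m\in [1:M]}$ i.i.d.\ uniform on $[1:N]$, independent of the sample. For the mean condition, exchangeability of $(\bX^{(n)})_n$ gives $\E(\widehat{W}_u(1)) = \E(\widehat{W}_{u,1}) = \E(\widehat{W}_u)$ for the averaged estimator of Section \ref{section_Wu_inconnue} at any accuracy $N_u$; choosing $N_u \to \infty$ and combining Theorem \ref{thrm_consis_MC} or Theorem \ref{thrm_consis_PF} with boundedness of $f$ yields $\E(\widehat{W}_u(1)) \to W_u$ as $N \to \infty$. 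For the covariance condition, decompose
\begin{equation*}
\frac{1}{M^2}\sum_{m,m'=1}^M \cov\bigl(\widehat{W}_u(m),\widehat{W}_u(m')\bigr) = \frac{\V(\widehat{W}_u(1))}{M} + \frac{M-1}{M}\,\cov\bigl(\widehat{W}_u(1),\widehat{W}_u(2)\bigr).
\end{equation*}
The diagonal contribution is $O(1/M)$ by boundedness of $f$. For the off-diagonal term, conditionally on the sample (and on the tie-breaking of Assumption \ref{assum_indep}), the variables $\widehat{W}_u(1)$ and $\widehat{W}_u(2)$ are independent because $s(1),s(2)$ are; so by the law of total covariance this term equals $\V\bigl(\tfrac{1}{N}\sum_{l=1}^N \widehat{W}_{u,l}\bigr)$. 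The law of total variance applied to $\widehat{W}_u = \tfrac{1}{N_u}\sum_l \widehat{W}_{u,s(l)}$ (with $(s(l))_l$ i.i.d.\ uniform) then gives $\V\bigl(\tfrac{1}{N}\sum_{l=1}^N \widehat{W}_{u,l}\bigr) \leq \V(\widehat{W}_u)$, and the right-hand side vanishes as $N \to \infty$ by Theorem \ref{thrm_consis_MC} or Theorem \ref{thrm_consis_PF} and boundedness of $f$.

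The main obstacle is precisely this covariance computation: identifying $\cov(\widehat{W}_u(1),\widehat{W}_u(2))$ with $\V\bigl(\tfrac{1}{N}\sum_l \widehat{W}_{u,l}\bigr)$ via conditional independence, and then dominating it by the already-controlled variance of the fully averaged nearest-neighbour estimator. Everything else is a bookkeeping application of the earlier propositions and theorems, together with the systematic use of boundedness of $f$ to pass from convergence in probability to convergence of first and second moments.
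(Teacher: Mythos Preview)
Your argument is correct and, for the subset $W$-aggregation procedure, identical to the paper's. For the random-permutation procedure you take a genuinely different route from the paper in verifying Assumption~\ref{assum_consis_perm}. The paper reaches inside the proof of Theorems~\ref{thrm_consis_MC} and~\ref{thrm_vitesse_MC}: it quotes Proposition~\ref{prop_asymp_sans_bias} for the bias condition and reproduces the variance computation of Proposition~\ref{prop_conv_proba_biaise}, which in turn rests on Proposition~\ref{prop_covE12} (the vanishing of $\cov(\widehat{W}_{u,1},\widehat{W}_{u,2})$ as $N\to\infty$). You instead treat Theorems~\ref{thrm_consis_MC}/\ref{thrm_consis_PF} as black boxes: conditioning on the sample together with the tie-breaking of Assumption~\ref{assum_indep} makes $\widehat{W}_u(1)$ and $\widehat{W}_u(2)$ conditionally independent, so the law of total covariance identifies $\cov(\widehat{W}_u(1),\widehat{W}_u(2))$ with $\V\bigl(\tfrac{1}{N}\sum_{l=1}^N \widehat{W}_{u,l}\bigr)$, and boundedness of $f$ upgrades the in-probability convergence of this full average (an instance of the estimator with $N_u=N$) to $L^2$ convergence. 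The payoff of your approach is self-containment: you never need Proposition~\ref{prop_covE12} or Proposition~\ref{prop_asymp_sans_bias} explicitly. The paper's approach, by contrast, pinpoints exactly which ingredient of the consistency proof drives each half of Assumption~\ref{assum_consis_perm}. A minor remark: your intermediate inequality $\V\bigl(\tfrac{1}{N}\sum_l \widehat{W}_{u,l}\bigr)\le \V(\widehat{W}_u)$ is correct but unnecessary, since $\tfrac{1}{N}\sum_l \widehat{W}_{u,l}$ is already an instance of $\widehat{W}_u$ (with $N_u=N$, without replacement) to which Theorem~\ref{thrm_consis_MC} or~\ref{thrm_consis_PF} applies directly.
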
 

\begin{rmk}
The Sobol indices are functions of the $(W_u)_{u\subset[1:p]}$. Indeed, we can define the Sobol index of a group of variables $\bX_u$ by either $S_u$ as in \cite{chastaing_indices_2013, broto_sensitivity_2019} or $S_u^{cl}$ as in \cite{iooss_shapley_2017}, where 
$$
S_u:= \frac{1}{\V(Y)}\sum_{v \subset u} (-1)^{|u|-|v|}V_v,\;\;\;\;S_u^{cl}:=\frac{V_u}{\V(Y)},
$$
and where we note that $V_u=\V(Y)-E_{-u}$ by the law of total variance. Thus, we get consistent estimators of the Sobol indices in the general setting of Assumption \ref{assum_polish}. Note that the sum over $u\subset[1:p]$ of the Sobol indices $S_u^{cl}$ is not equal to 1, and when the inputs are dependent, the Sobol index $S_u$ can take negatives values.
\end{rmk}

\subsection{Numerical experiments} In this section, we compute numerically the estimators of the Shapley effects with an unknown input distribution. As in Section \ref{section_num_connue}, we choose the linear Gaussian framework to compute the theoretical values of the Shapley effects.

We take the same parameters as in Section \ref{section_num_connue}. The size $N$ of the observed sample $(\bX^{(n)})_{n \in [1:N]}$ is 10000. Each estimator is computed 200 times. We now have 8 consistent estimators given by:
\begin{itemize}
\item 2 different $W$-aggregation procedures: subset or random-permutation;
\item 2 different estimators of $W_u$: double Monte-Carlo or Pick-and-Freeze;
\item 2 slightly different versions of the estimators of $W_u$: "mix" or "knn".
\end{itemize}

\begin{figure}[h!]
\center
\includegraphics[width=12cm,height=12cm]{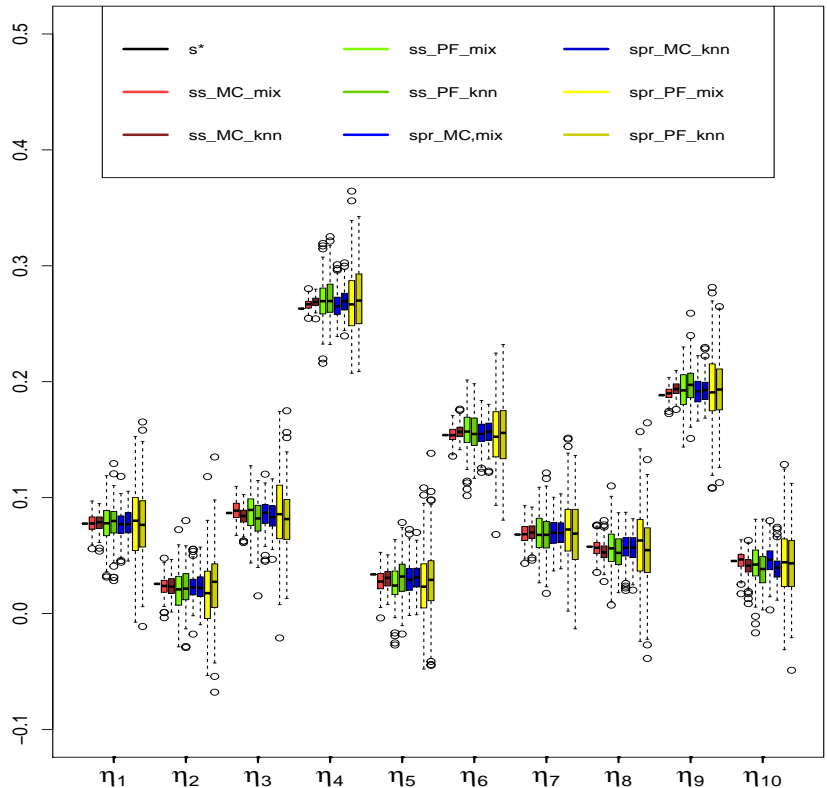}
\caption{Estimation of the Shapley effects in the linear Gaussian framework when we only observe a sample of $\bX$. In black (s*) we show the theoretical results, in red the estimates from the subset $W$-aggregation procedure with the double Monte-Carlo estimator (ss$\_$MC$\_$mix and ss$\_$MC$\_$knn), in green the estimates from the subset $W$-aggregation procedure with the Pick-and-Freeze estimator (ss$\_$PF$\_$mix and ss$\_$PF$\_$knn), in blue the estimates from the random-permutation $W$-aggregation procedure with the double Monte-Carlo estimator (spr$\_$MC$\_$mix and spr$\_$MC$\_$knn) and in yellow the estimates from the random-permutation $W$-aggregation procedure with the Pick-and-Freeze estimator (spr$\_$PF$\_$mix and spr$\_$PF$\_$knn).}
\label{boxplot_inconnue}
\end{figure}

In Figure \ref{boxplot_inconnue}, we plot the theoretical values of the Shapley effects, together with the boxplots of the 200 realizations of each estimator, and with a total cost $N_{tot}=54000$ (we assume here that $f$ is a costly computer code and that for all estimators, the cost is the number of evaluations of $f$).
\begin{figure}[t!]
\center
\includegraphics[width=10cm,height=10cm]{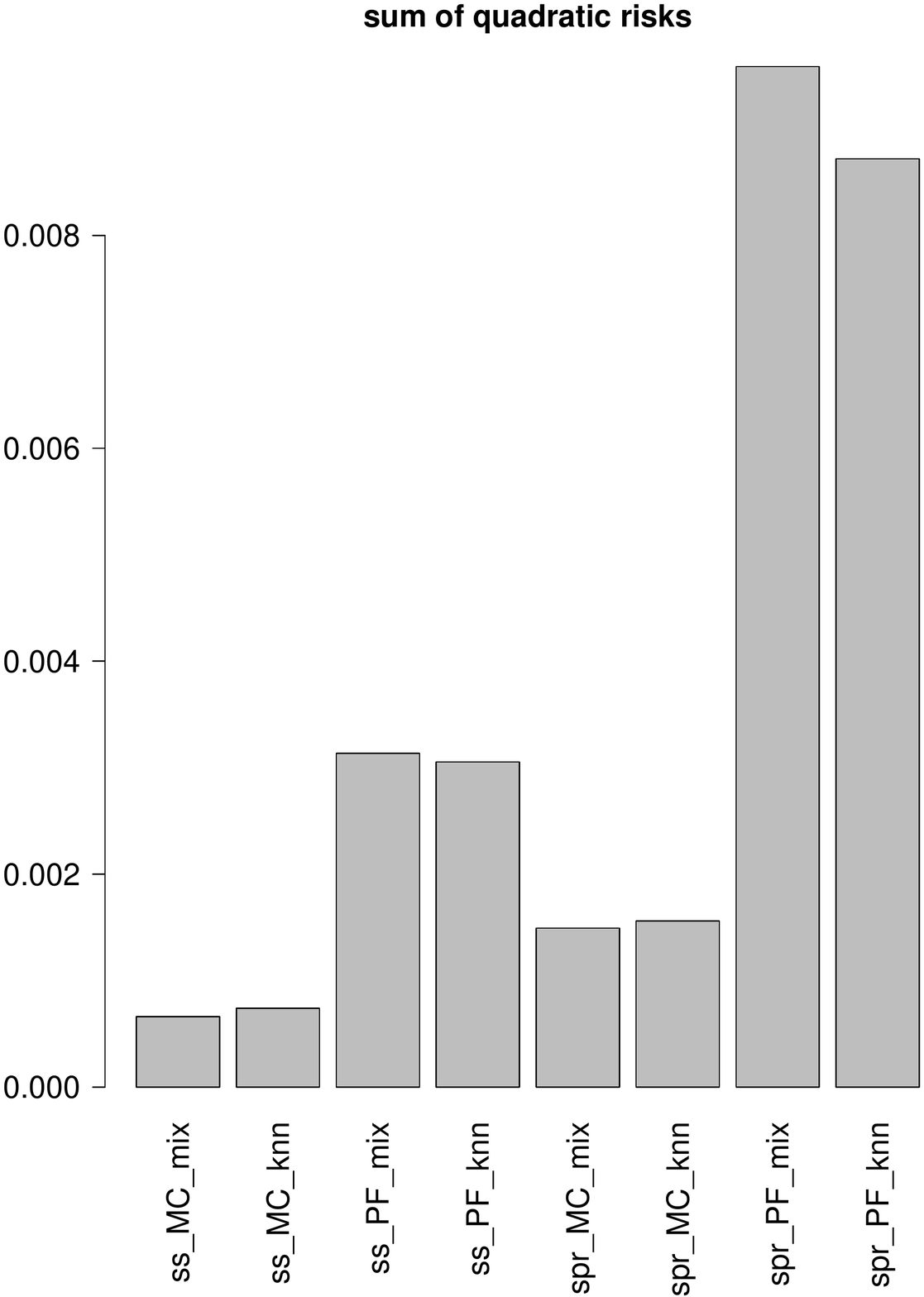}
\caption{Sum over $i$ of the estimated quadratic risks of the eight estimators of the Shapley effects in the linear Gaussian framework when we only observe a sample of $\bX$.}
\label{barplot_inconnue}
\end{figure}

\begin{rmk}
In the linear Gaussian framework, the function $f$ is not bounded and the assumptions of Proposition \ref{prop_consis_shap_inconnue} do not hold. We can thus not guarantee the consistency of the Shapley effects estimators. However, this framework enables to compute the theoretical Shapley effects and we can see numerically that the estimators seem to be consistent.
\end{rmk}

We show the sums over $i\in [1:p]$ of their quadratic risks (estimated with 200 realizations) in Figure \ref{barplot_inconnue}. As in Section \ref{section_num_connue}, the subset $W$-aggregation procedure is better than the random-permutation $W$-aggregation procedure and double Monte-Carlo is better than Pick-and-Freeze. Furthermore, there are no significant differences between the version "mix" and the version "knn". Recall that, in order to compute the estimators with the "mix" version, we need to call the computer code of $f$ at new inputs whereas "knn" only needs an i.i.d. sample $(\bX^{(n)},f(\bX^{(n)}))_{n \in [1:N]}$.\\

We now compare the sums over $i\in [1:p]$ of the estimated quadratic risks of the estimators from the subset $W$-aggregation procedure with double Monte-Carlo when we know the distribution of $\bX$ (results of Section \ref{section_num_connue}) and when we just observe a sample of size 10000 (previous results of this section). These values are equal to $5.9\;10^{-3}$ when we know the distribution of $\bX$, to  $6.6\;10^{-3}$ when we only observe the sample with $\widehat{E}_{u,MC}^{mix}$ and to $7.4\;10^{-3}$ when we only observe the sample with $\widehat{E}_{u,MC}^{knn}$. Thus, in dimension 10, replacing the knowledge of $\bX$ by a sample of size 10000 does not seem to deteriorate significantly our estimates of the Shapley effects.

\section{Application to real data}\label{section_applireal}

In this section, we apply the estimator of the Shapley effects given by the subset $W$-aggregation procedure and the double Monte-Carlo estimator $\widehat{E}_{u,MC}^{knn}$ in Equation \eqref{eq_VuMC_inconnue2} to a real data set. We use the "depSeuil.dat" data, available at \\{\verb http://www.math.univ-toulouse.fr/~besse/Wikistat/data } from \cite{besse_comparaison_2007}. This data set contains 10 variables with 1041 sample observations. The variables are:
\begin{itemize}
    \item JOUR: type of day (holiday: 1, no holiday: 0);
    \item O3obs: observed ozone concentration; 
    \item MOCAGE: ozone concentration predicted by a fluid mechanics model;
    \item TEMPE: temperature predicted by the official meteorology service of France;
    \item RMH2O: humidity ratio;
    \item NO2: nitrogen dioxide concentration;
    \item NO: nitrogen oxide concentration; 
    \item STATION: site of observation (5 different sites);
    \item VentMOD: wind force;
    \item VentANG: wind direction.
\end{itemize}
Here, we focus on the ozone concentration O3obs in function of the nine other variables. 
Hence, let $\tilde{Y}$ be the random variable of the ozone concentration and let $\bX$ be the random vector containing the nine other random variables.
Using the estimator $\widehat{E}_{\emptyset,MC}^{knn}$ of $\E_{\emptyset}=\E(\V(\tilde{Y}|\bX))$ given by Equation \eqref{eq_VuMC_inconnue2}, with $N_I=3$ and $N_{\emptyset}=1000$, we estimate the value of $\V(\E(\tilde{Y}|\bX))\slash \V(\tilde{Y})$ to 0.57, whereas it would be equal to 1 if $\tilde{Y}$ was a function of $\bX$. Thus, we can not assume that the ozone concentration is a function of the nine other random variables.

The theory and methodology of this article holds when $\tilde{Y}$ is a deterministic function of $\bX$. Hence, we create metamodels of the ozone concentration in function of $\bX$, and we write $Y$ the output of the metamodel. In this case, $Y$ is indeed a deterministic function of $\bX$ and we can compute the Shapley effects, which now quantify the impact of the inputs on the metamodel prediction. In practice, we replace the output column by the fitted values given by the metamodel.

To study the impact of the metamodel on the Shapley effects, we estimate the Shapley effects corresponding to three metamodels:
\begin{itemize}
    \item XGBoost, from the R package {\verb xgboost }, with optimized parameter by cross-validation;
    \item generalized linear model (GLM);
    \item Random Forest, from the R package {\verb randomForest }, which optimizes automatically the parameters by out-of-bag.
\end{itemize}

\begin{rmk}
Using the estimator $\widehat{E}_{\emptyset,MC}^{knn}$, we estimate the value of $\V(\E({Y}|\bX))\slash \V({Y})$ to 0.91, 0.93 and 0.90 where $Y$ denotes the output of each of the three metamodels XGBoost, GLM and Random Forest respectively. In contrast, the value of $\V(\E(\tilde{Y}|\bX))\slash \V(\tilde{Y})$ is 0.57 when $\tilde{Y}$ denotes the original observed ozone concentrations. This shows that the predicted values are different from the initial values of the ozone concentration. Moreover, this shows that the metamodels do not overfit the data, since the estimated values of $\V(\E(Y|\bX))\slash \V(Y)$ are close to 1. Indeed, that means that the fitted values of the ozone concentration are much more explained by $\bX$ and have been smoothed by the metamodels. Furthermore, if the metamodels were overfitting the noise contained in the observed ozone concentration values, their outputs could not be predicted well given $\bX$, and the estimate of $\V(\E(Y | \bX)) / \V(Y )$ would then be small when $Y$ is one of the metamodel outputs.

\end{rmk}

For each metamodel, we estimate the Shapley effects with the subset $W$-aggregation procedure and the double Monte-Carlo estimator $\widehat{E}_{u,MC}^{knn}$, with $N_I=3$ and $N_{tot}=50000$ (but the real cost is actually 40176, see Remark \ref{rmk_approx}). For each metamodel, the computation time of all the Shapley effects on a personal computer is around 5 minutes. The results are presented in Figure \ref{fig_Ozone}.\\

\begin{figure}
    \centering
    \includegraphics[scale=0.45]{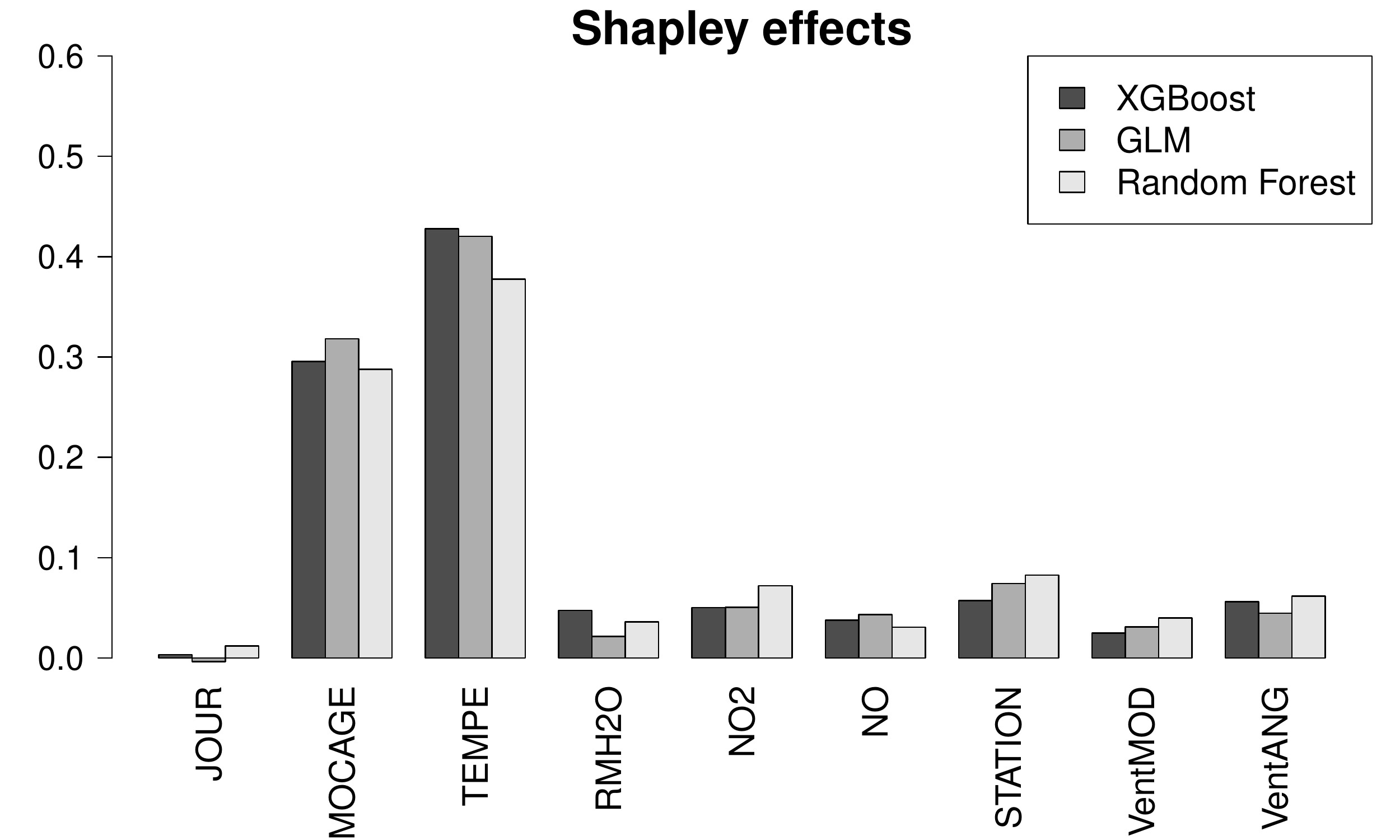}
    \caption{Estimation of the Shapley effects for three metamodels: XGBoost, GLM and Random Forest.}
    \label{fig_Ozone}
\end{figure}

We remark that the three metamodels yield similar Shapley effects. This is reassuring, since observing different behaviours of the metamodels would be a sign of inaccuracy for some of them. Only two variables have a significant impact on the ozone concentration: the predicted ozone concentration (MOCAGE) and the predicted temperature (TEMPE). This comforts the results of \cite{besse_comparaison_2007} as they use regression trees whose two most important variables are the predicted ozone concentration and the predicted temperature. All the other variables have a much smaller impact. The Shapley effect of the predicted temperature is larger than the one of the predicted ozone concentration. It could be from the better accuracy of the predicted temperature (given by the official meteorology service of France) than the predicted ozone concentration (given by a fluid mechanics model). Finally, we remark that the type of the day (holiday or not) has no impact on the ozone concentration. The corresponding Shapley effect is even estimated by a slightly negative value for the GLM, which stems from the small error estimation.

To conclude, the Shapley effect estimator given by the subset $W$-aggregation procedure and the double Monte-Carlo estimator $\widehat{E}_{u,MC}^{knn}$ enables us to estimate the Shapley effects on real data. The estimator only requires a data frame of the inputs-output and handles heterogeneous data, with some categorical inputs and some continuous inputs. Here, the estimator was applied to a metamodel output. This illustrates the interest of the Shapley effects (and of sensitivity analysis) to understand and interpret the predictions of complex black-box machine learning procedures \cite{ribeiro_why_2016,bachoc_entropic_2018}.

This estimator has been implemented in the R package {\verb sensitivity } as the function "shapleySubsetMc".

\section{Conclusion}\label{section_conclusion}
In this article, we focused on the estimation of the Shapley effects. We explained that this estimation is divided into two parts: the estimation of the conditional elements $(W_u)_{\emptyset \varsubsetneq u \varsubsetneq [1:p]}$ and the $W$-aggregation procedure. We suggested the new subset $W$-aggregation procedure and we explained how the already existing random-permutation $W$-aggregation procedure of \cite{song_shapley_2016} minimizes the variance. However, the subset $W$-aggregation procedure is more efficient by using all the estimates of the conditional elements for each Shapley effect estimation. We highlighted this efficiency by numerical experiments. In a second part, we suggested various estimators of $(W_u)_{\emptyset \varsubsetneq u \varsubsetneq [1:p]}$ when the input distribution is unknown and when we only observe an i.i.d. sample of the input variables.  We proved their consistency and gave the rates of convergence. Then, we used these new estimators to estimate the Shapley effects with consistency. We illustrated the efficiency of these estimators with numerical experiments and we tested one estimator on real heterogeneous data.

It is known that the Monte-Carlo algorithms for the estimation of the Sobol indices require many evaluations of $f$ to be accurate (typically several thousands). If the evaluation cost is too high, it could be necessary to replace the function $f$ with a metamodel $\widehat{f}$, such as Kriging \cite{santner_design_2003}. An important field of research in Kriging is adaptive design of experiments \cite{jones_efficient_1998}. It would be interesting to study adaptive design of experiments in order to estimate the Shapley effects \cite{fruth_sequential_2015}, and to develop adaptive algorithms adapted to the estimators that we suggest in this article.

\section*{Acknowledgments}
We are grateful to Vincent Prost for his helpful advises. We acknowledge the financial support of the Cross-Disciplinary Program on Numerical Simulation of CEA, the French Alternative Energies and Atomic Energy Commission.
We would like to thank BPI France for co-financing this work, as part of the PIA (Programme d'Investissements d'Avenir) - Grand D\'{e}fi du Num\'{e}rique 2, supporting the PROBANT project.
We are grateful to Philippe Besse and Olivier Mestre for enabling us to use the Ozone data set.
We are very grateful to the associate editor and two reviewers, for their comments that lead to an improvement of this article.

\appendix

\section{Proofs for the double Monte-Carlo and Pick-and-Freeze estimators: Theorems \ref{thrm_consis_MC}, \ref{thrm_vitesse_MC}, \ref{thrm_consis_PF} and \ref{thrm_vitesse_PF}}
\leavevmode\par
\bigskip

To unify notation, let us write
\begin{eqnarray*}
\Phi^{mix}_{MC}:&(\bx^{(1)},...,\bx^{(N_I)})&\longmapsto \frac{1}{N_I-1}\sum_{k=1}^{N_I}\left( f(\bx^{(1)}_{-u},\bx^{(k)}_{u})-\frac{1}{N_I}\sum_{l=1}^{N_I}f(\bx^{(1)}_{-u},\bx^{(l)}_{u})\right)^2,\\
\Phi^{knn}_{MC}:&(\bx^{(1)},...,\bx^{(N_I)})&\longmapsto \frac{1}{N_I-1}\sum_{k=1}^{N_I}\left( f(\bx^{(k)})-\frac{1}{N_I}\sum_{l=1}^{N_I}f(\bx^{(l)})\right)^2,\\
\Phi_{PF}^{mix}: & (\bx^{(1)},\bx^{(2)}) & \longmapsto f(\bx^{(1)})f(\bx_{u}^{(1)},\bx_{-u}^{(2)})-\E(Y)^2,\\
\Phi_{PF}^{knn}: & (\bx^{(1)},\bx^{(2)}) & \longmapsto f(\bx^{(1)})f(\bx^{(2)})-\E(Y)^2.
\end{eqnarray*}
Remark that all these four functions
are bounded as $f$ is bounded.
When we do not write the exponent $mix$ or $knn$ of $\Phi$ or of the estimators, it means that we refer to both of them ($mix$ and $knn$).  
We write the proofs only for $\widehat{E}_{u,MC}$. For the estimators $\widehat{V}_{u,PF}$, it suffices to replace $\Phi_{MC}$ by $\Phi_{PF}$, $-u$ by $u$ (and vice-versa), $E_u$ by $V_u$, $\V(Y|X_{-u})$ by $\E(Y|X_u)^2-\E(Y)^2$ and $N_I$ by $2$. Hence, we shall only write the complete proofs for Theorems \ref{thrm_consis_MC} and \ref{thrm_vitesse_MC}. To simplify notation, we will write $\widehat{E}_u$ for $\widehat{E}_{u,MC}$, $\widehat{E}_{u,l}$ for $\widehat{E}_{u,l,MC}$ and $\Phi$ for $\Phi_{MC}$. $N_I$ is a fixed integer. We also write $k_N(l,i):=k_N^{-u}(l,i)$, and the dependence on $-u$ is implicit.

\subsection{Proof of consistency: Theorems \ref{thrm_consis_MC} and \ref{thrm_consis_PF}}\label{section_consistance}

Recall that for all $i \in [1:p]$, $(\cX_i,d_i)$ is a Polish space. Then, for all $v \subset[1:p]$, $\cX_v:=\prod_{i\in v}\cX_i$ is a Polish space for the distance $d_v:=\max_{i\in v} d_i$. We will write $B_v(\bx_v,r)$ the open ball in $\cX_v$ of radius $r$ and center $\bx_v$. We also let $\mu_v:=\bigotimes_{i\in v} \mu_i$. Recall that the choice of the $N_I$-nearest neighbours could be not unique. In this case, conditionally to $(\bX_{-u}^{(n)})_{n\leq N}$, the $(k_N(l,i))_{l\in [1:N],i \in [1:N_I]}$ are random variables that we choose in the following way. Conditionally to $(\bX_{-u}^{(n)})_{n\leq N}$, we choose $k_N(l,i)$ uniformly over all the indices of the $i$-th nearest neighbours of $\bX_{-u}^{(l)}$, such that the $(k_N(l,i))_{i\leq N_I}$ are two by two distinct and independent of all the other random variables conditionally to $(\bX_{-u}^{(n)})_{n\leq N}$.

In particular, as we want to prove asymptotic results, we assume (without loss of generality) that we have an infinite i.i.d. sample $(\bX^{(n)})_{n\in \N^*}$, and we assume that for all $N\in \N^*$, conditionally to  $(\bX_{-u}^{(n)})_{n\leq N}$, $(k_N(l,i))_{i\leq N_I}\ind \sigma\left((\bX_{u}^{(n)})_{n\leq N},(\bX^{(n)})_{n> N},(k_{N'}(l',i'))_{(N',l')\neq (N,l),\; i'\in [1:N_I]} \right)$. Hence, for all $N\in \N^*$ and $l\in [1:N]$, conditionally to  $(\bX_{-u}^{(n)})_{n\in \N}$, we have 
$$
(k_N(l,i))_{i\leq N_I}\ind \sigma\left((\bX_u^{(n)})_{n\in \N},(k_{N'}(l',i'))_{(N',l')\neq (N,l),\; i'\in [1:N_I]} \right).
$$

To simplify notation, let us write  $k_N(i):=k_N(1,i)$ (the index of one $i$-th neighbour of $\bX_{-u}^{(1)}$) and $k'_N(i):=k_N(2,i)$ (the index of one $i$-th neighbour of $\bX_{-u}^{(2)}$). Remark that $\bX_{-u}^{(k_N(i))}$ does not depend on $k_N(i)$. Let $\bk:=(k_N(i))_{i\leq N_I,N\in \N^*}$ and $\bk_N:=(k_N(i))_{i\leq N_I}$. We will use the letter $\bh$ for the realizations of the variable $\bk$.

To begin with, let us recall two well-known results that we will use in the following.
\begin{lm}\label{lm_gen_ind}
Let $A$ be a real random variable. If $\mathcal{H}$ is independent of $\sigma(\sigma(A),\mathcal{G})$, then
$$
\E(A|\sigma(\mathcal{G},\mathcal{H}))=\E(A|\mathcal{G}).
$$
\end{lm}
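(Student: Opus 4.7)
The plan is to verify that $\E(A\mid\mathcal{G})$ itself serves as a version of $\E(A\mid\sigma(\mathcal{G},\mathcal{H}))$. By uniqueness of conditional expectation, it suffices to check the two defining properties: measurability with respect to $\sigma(\mathcal{G},\mathcal{H})$ (which is immediate from $\mathcal{G}\subset\sigma(\mathcal{G},\mathcal{H})$), and the integral identity $\E(A\1_B)=\E(\E(A\mid\mathcal{G})\1_B)$ for every $B\in\sigma(\mathcal{G},\mathcal{H})$. Throughout, I assume $A\in L^1$, which is needed for the conditional expectations to be defined.

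To establish the integral identity, I would first restrict attention to the $\pi$-system $\mathcal{C}:=\{G\cap H:\,G\in\mathcal{G},\;H\in\mathcal{H}\}$, which generates $\sigma(\mathcal{G},\mathcal{H})$. For $G\in\mathcal{G}$ and $H\in\mathcal{H}$, the computation is
$$
\E(A\1_{G\cap H})=\E\!\left(A\1_G \1_H\right)=\E(A\1_G)\,\E(\1_H),
$$
where the second equality uses that $A\1_G$ is $\sigma(\sigma(A),\mathcal{G})$-measurable and $\1_H$ is independent of $\sigma(\sigma(A),\mathcal{G})$ by hypothesis. By the defining property of $\E(A\mid\mathcal{G})$, this equals $\E(\E(A\mid\mathcal{G})\1_G)\,\E(\1_H)$. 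Now $\E(A\mid\mathcal{G})\1_G$ is $\mathcal{G}$-measurable, and since $\mathcal{H}\perp\mathcal{G}$ (a consequence of the hypothesis), $\1_H$ is again independent of this product, so that
$$
\E(\E(A\mid\mathcal{G})\1_G)\,\E(\1_H)=\E\!\left(\E(A\mid\mathcal{G})\1_G \1_H\right)=\E\!\left(\E(A\mid\mathcal{G})\1_{G\cap H}\right).
$$

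Finally, a standard monotone class (Dynkin $\pi$–$\lambda$) argument extends the equality of integrals from $\mathcal{C}$ to all of $\sigma(\mathcal{G},\mathcal{H})$: the collection of $B\in\sigma(\mathcal{G},\mathcal{H})$ for which $\E(A\1_B)=\E(\E(A\mid\mathcal{G})\1_B)$ is a $\lambda$-system containing $\mathcal{C}$, hence contains $\sigma(\mathcal{G},\mathcal{H})$. There is no genuine obstacle; the only subtlety worth flagging is that the hypothesis $\mathcal{H}\perp\sigma(\sigma(A),\mathcal{G})$ must be invoked twice — once to peel $\1_H$ off $A\1_G$, and once to reabsorb it into $\E(A\mid\mathcal{G})\1_G$ — which is precisely why the joint independence formulation is needed rather than the strictly weaker pair $\mathcal{H}\perp\sigma(A)$ and $\mathcal{H}\perp\mathcal{G}$.
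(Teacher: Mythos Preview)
Your proof is correct and is essentially the standard textbook argument for this well-known fact. The paper itself does not prove this lemma: it simply states it (together with Lemma~\ref{lm_gen_transfert}) as a ``well-known result'' and moves on, so there is no proof to compare against. Your verification via the $\pi$-system of rectangles $G\cap H$ followed by a Dynkin $\pi$--$\lambda$ extension is exactly what one finds in standard references, and your remark about needing the joint independence hypothesis (rather than pairwise) is a nice observation.
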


\begin{lm}\label{lm_gen_transfert}
Let $A,B$ be random variables. For all measurable $\phi$,
$$
\mathcal{L}(\phi(A,B)|A=a)=\mathcal{L}(\phi(a,B)|A=a)
$$
and if $B$ is independent of $A$, then
$$
\mathcal{L}(\phi(A,B)|A=a)=\mathcal{L}(\phi(a,B)).
$$
\end{lm}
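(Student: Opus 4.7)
The plan is to prove both identities by verifying, for an arbitrary bounded measurable test function $g$ on the codomain of $\phi$, that the conditional expectation $\E(g(\phi(A,B))|A)$ equals the claimed expression $\PP_A$-a.s. Since the law of a random element is characterized by integrals against bounded measurable test functions, this will establish equality of the conditional distributions.

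First I would set up a regular conditional distribution $\PP_{B|A=a}$ of $B$ given $A$, which exists whenever $B$ takes values in a Polish space (the natural setting here, consistent with Assumption \ref{assum_polish}). By the disintegration / tower property, for every bounded measurable $g$ and every measurable set $C$ in the range of $A$,
\begin{equation*}
\E\bigl(g(\phi(A,B))\1_{A\in C}\bigr)=\int_{C}\left(\int g(\phi(a,b))\,\PP_{B|A=a}(db)\right)\PP_A(da).
\end{equation*}
The inner integral, viewed as a function of $a$, is a version of $\E(g(\phi(A,B))|A=a)$. But it equals $\E(g(\phi(a,B))|A=a)$ by definition of the conditional distribution of $B$ given $A=a$. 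Since $g$ is arbitrary in a separating family, this yields the first identity $\mathcal{L}(\phi(A,B)|A=a)=\mathcal{L}(\phi(a,B)|A=a)$ for $\PP_A$-a.e.\ $a$.

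For the second identity, I would use the independence hypothesis $B\ind A$ to identify the regular conditional distribution $\PP_{B|A=a}$ with the marginal $\PP_B$ for $\PP_A$-a.e.\ $a$; this is immediate since for any bounded measurable $h$, $\E(h(B)|A)=\E(h(B))$ by independence, so the $\PP_{B|A=a}$-integral of $h$ agrees with its $\PP_B$-integral almost surely. Substituting into the first identity gives
\begin{equation*}
\mathcal{L}(\phi(A,B)|A=a)=\mathcal{L}(\phi(a,B)|A=a)=\mathcal{L}(\phi(a,B)),
\end{equation*}
the last equality because under $\PP_{B|A=a}=\PP_B$ the random element $\phi(a,B)$ has exactly the unconditional law $\mathcal{L}(\phi(a,B))$.

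There is no real obstacle here: the statement is the standard substitution/freezing principle for conditional distributions, and the only mild technical point is the existence of a regular conditional distribution, which is granted by the Polish-space framework assumed throughout the paper. A short alternative proof would bypass regular conditional distributions entirely and work directly with bounded measurable $g$ and the definition $\E(g(\phi(A,B))|A)$ as the $\sigma(A)$-measurable projection, using a monotone class argument on $\phi=\1_{D_1}\otimes \1_{D_2}$ followed by linearity and monotone convergence; I would mention this as an equivalent route in case one wishes to avoid invoking disintegration explicitly.
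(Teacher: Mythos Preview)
Your proof is correct and is the standard way to establish this substitution principle. Note, however, that the paper does not actually give a proof of this lemma: it introduces Lemmas~\ref{lm_gen_ind} and~\ref{lm_gen_transfert} with the sentence ``let us recall two well-known results that we will use in the following'' and states them without argument. So there is no paper proof to compare against; your write-up via regular conditional distributions (with the monotone-class alternative you mention) is a perfectly adequate justification and is consistent with the Polish-space setting the paper works in.
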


Now, to prove Theorem \ref{thrm_consis_MC}, we need to prove several intermediate results.
\begin{lm}\label{lm_conv_voisin}
For all $l\in \N^*$,
\begin{equation}
\bX^{(k_N(l))}_{-u}\overset{a.s.}{\underset{N\rightarrow +\infty}{\longrightarrow}}\bX_{-u}^{(1)}.
\end{equation}
\end{lm}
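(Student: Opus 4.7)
The case $l=1$ is trivial since then $k_N(1)=1$, so assume $l \geq 2$. Set $D_N := d_{-u}\bigl(\bX^{(k_N(l))}_{-u}, \bX^{(1)}_{-u}\bigr)$ and, for $\bx \in \cX_{-u}$ and $\varepsilon>0$, $p(\bx,\varepsilon) := \PP\bigl(\bX_{-u} \in B_{-u}(\bx,\varepsilon)\bigr)$. The plan is to apply the conditional first Borel--Cantelli lemma given $\bX^{(1)}_{-u}$, using that the balls around $\bX^{(1)}_{-u}$ almost surely have positive $\PP_{\bX_{-u}}$-measure because $\cX_{-u}$ is Polish.

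Fix $\varepsilon>0$. The first step is to observe the inclusion $\{D_N > \varepsilon\} \subset \mathcal{A}_{N,\varepsilon}$, where $\mathcal{A}_{N,\varepsilon}$ is the event that strictly fewer than $l-1$ of the points $\bX^{(2)}_{-u}, \dots, \bX^{(N)}_{-u}$ lie in $B_{-u}(\bX^{(1)}_{-u},\varepsilon)$: indeed, if at least $l-1$ of them do, then any admissible choice of $l$-th nearest neighbour of $\bX^{(1)}_{-u}$ lies within distance $\varepsilon$. Since $\bX^{(2)}_{-u},\dots,\bX^{(N)}_{-u}$ are i.i.d.\ with law $\PP_{\bX_{-u}}$ and independent of $\bX^{(1)}_{-u}$, the conditional count is Binomial$(N-1,\,p(\bX^{(1)}_{-u},\varepsilon))$, so that
\[
\PP\bigl(\mathcal{A}_{N,\varepsilon} \,\big|\, \bX^{(1)}_{-u}\bigr) \;\leq\; (l-1)\binom{N-1}{l-2}\bigl(1-p(\bX^{(1)}_{-u},\varepsilon)\bigr)^{N-l+1}.
\]

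The main step is to prove that $p(\bX^{(1)}_{-u},\varepsilon) > 0$ almost surely. Because $\cX_{-u}$ is Polish, it is separable, and hence $\PP_{\bX_{-u}}$ has a well-defined topological support $S \subset \cX_{-u}$ with $\PP(\bX^{(1)}_{-u} \in S) = 1$, and with the property that every open ball centred at a point of $S$ has positive $\PP_{\bX_{-u}}$-measure. Applied to $B_{-u}(\bX^{(1)}_{-u},\varepsilon)$, this yields $p(\bX^{(1)}_{-u},\varepsilon)>0$ almost surely. On this full-probability event the bound above decays exponentially in $N$, so $\sum_N \PP(\mathcal{A}_{N,\varepsilon} \mid \bX^{(1)}_{-u}) < \infty$ almost surely.

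Applying the conditional Borel--Cantelli lemma (conditionally on $\bX^{(1)}_{-u}$) gives $\PP(\mathcal{A}_{N,\varepsilon}\text{ i.o.}\mid \bX^{(1)}_{-u}) = 0$ almost surely, hence unconditionally as well, so $D_N \leq \varepsilon$ eventually a.s. Intersecting over the countable sequence $\varepsilon = 1/m$, $m \in \N^*$, yields $D_N \to 0$ almost surely. The only non-routine point is the positivity $p(\bX^{(1)}_{-u},\varepsilon)>0$, which is handled by the support argument in the separable space $\cX_{-u}$; the binomial tail estimate and Borel--Cantelli step are standard.
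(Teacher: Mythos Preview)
Your proof is correct and takes essentially the same approach as the paper: both hinge on the support argument in the Polish space $\cX_{-u}$ to obtain $\PP_{\bX_{-u}}\bigl(B_{-u}(\bX_{-u}^{(1)},\varepsilon)\bigr)>0$ almost surely, and then deduce convergence of the nearest neighbours from the resulting exponential decay. The paper reduces to the case $l=2$ and writes out the non-convergence event directly, whereas you handle general $l$ via a binomial tail bound and conditional Borel--Cantelli; this is only a packaging difference, and your treatment of general $l$ is if anything a bit tidier.
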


\begin{proof}
First, let us show that for all $\varepsilon>0$, $\PP(d_{-u}(\bX_{-u}^{(1)},\bX_{-u}^{(2)})<\varepsilon)>0$. Indeed, as $\cX_{-u}$ is a Polish space, its support has measure 1. Thus
\begin{eqnarray*}
\PP(d_{-u}(\bX_{-u}^{(1)},\bX_{-u}^{(2)})<\varepsilon)&=&\int_{\cX_{-u}^2}\mathds{1}_{d_{-u}(\bx_{-u},\bx_{-u}')<\varepsilon} d\PP_{\bX_{-u}}\otimes\PP_{\bX_{-u}}(\bx_{-u},\bx_{-u}')\\
&=&\int_{\cX_{-u}} \PP_{\bX_{-u}}(B_{-u}(\bx_{-u},\varepsilon))d\PP_{\bX_{-u}}(\bx_{-u})\\
&=&\int_{supp(\cX_{-u})} \PP_{\bX_{-u}}(B_{-u}(\bx_{-u},\varepsilon))d\PP_{\bX_{-u}}(\bx_{-u})\\
&>&0,
\end{eqnarray*}
because if $\bx_{-u} \in supp(\cX_{-u})$, then $B_{-u}(\bx_{-u},\varepsilon)\not\subset supp(\cX_{-u})^c$ and $\PP_{\bX_{-u}}(B_{-u}(\bx_{-u},\varepsilon))>0$.\\

Next, remark that
$$ 
\bX_{-u}^{(k_N(l))}\overset{a.s.}{\underset{N\rightarrow +\infty}{\longrightarrow}}\bX_{-u}^{(1)}\; \;\Longleftrightarrow \;\; \bX_{-u}^{(k_N(2))}\overset{a.s.}{\underset{N\rightarrow +\infty}{\longrightarrow}}\bX_{-u}^{(1)},
$$
and,
\begin{align*}
\PP\left( \left\{ \bX_{-u}^{(k_N(2))}{\underset{N\rightarrow +\infty}{\longrightarrow}}\bX_{-u}^{(1)}\right\}^c \right)&=\PP\left( \bigcup_{k\geq 1}\bigcap_{n\geq 2}d_{-u}(\bX_{-u}^{(n)},\bX_{-u}^{(1)})\geq \frac{1}{k}\right)\\
&\leq  \sum_{k\geq 1}\PP\left(\bigcap_{n\geq 2}d_{-u}(\bX_{-u}^{(n)},\bX_{-u}^{(1)})\geq \frac{1}{k} \;  \right) \\
&=  \sum_{k\geq 1}\lim_{N\rightarrow +\infty}\PP\left(d_{-u}(\bX_{-u}^{(2)},\bX_{-u}^{(1)})\geq \frac{1}{k} \right)^N\\
&= \sum_{k\geq 1}  \lim_{N\rightarrow +\infty}\left[ 1-\PP\left(d_{-u}(\bX_{-u}^{(2)},\bX_{-u}^{(1)})< \frac{1}{k} \right)\right]^N\\
&=\sum_{k\geq 1}0\\
&=0.
\end{align*}
\end{proof}

\begin{lm}\label{lm_continuite_loi_condi}
There exists a version of
$$
\mathcal{L}(\bX_u|\bX_{-u}=\cdot):(\mathcal{X}_{-u},d_{-u})\longrightarrow (\mathcal{M}_1(\cX_u),\mathcal{T}(weak))
$$
which is continuous $\PP_{\bX_{-u}}$-a.e., where $\mathcal{M}_1(\mathcal{X}_u)$ is the set of probability measures on $\mathcal{X}_u$ and $\mathcal{T}(weak)$ is the topology of weak convergence.
\end{lm}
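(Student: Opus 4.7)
The plan is to exhibit an explicit version of $\mathcal{L}(\bX_u | \bX_{-u}=\cdot)$ as a ratio of densities and to prove its weak continuity at $\PP_{\bX_{-u}}$-a.e.\ point via Scheff\'e's theorem.

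First, I would define the candidate version. Let
$$
f_{\bX_{-u}}(\bx_{-u}) := \int_{\cX_u} f_\bX(\bx_u, \bx_{-u}) \, d\mu_u(\bx_u),
$$
which is finite since $f_\bX$ is bounded and $\mu_u$ is finite. Set $\psi(\bx_{-u}) \in \mathcal{M}_1(\cX_u)$ to be the probability measure with density $g_{\bx_{-u}}(\bx_u) := f_\bX(\bx_u,\bx_{-u}) / f_{\bX_{-u}}(\bx_{-u})$ with respect to $\mu_u$ when $f_{\bX_{-u}}(\bx_{-u}) > 0$, and a fixed $\nu_0 \in \mathcal{M}_1(\cX_u)$ otherwise. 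A Fubini computation against product indicators $\mathbf{1}_A(\bx_u)\mathbf{1}_B(\bx_{-u})$ shows that $\psi$ is a version of $\mathcal{L}(\bX_u | \bX_{-u}=\cdot)$; note that $\PP_{\bX_{-u}}(\{f_{\bX_{-u}} > 0\}) = 1$.

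Next, I would localize the continuity points. Let $D \subset \cX$ denote the Borel set of discontinuity points of $f_\bX$, so by hypothesis $\PP_\bX(D) = \int_D f_\bX \, d\mu = 0$. Applying Fubini to $\mathbf{1}_D \, f_\bX$, one obtains that for $\mu_{-u}$-a.e., hence $\PP_{\bX_{-u}}$-a.e., $\bx_{-u}^*$, the section $\{\bx_u : (\bx_u, \bx_{-u}^*) \in D\}$ is $\mu_u$-null on $\{f_\bX(\cdot, \bx_{-u}^*) > 0\}$. Combined with the $\PP_{\bX_{-u}}$-a.s.\ positivity of $f_{\bX_{-u}}$, this produces a set $G$ of full $\PP_{\bX_{-u}}$-measure such that, for every $\bx_{-u}^* \in G$, $f_\bX$ is jointly continuous at $(\bx_u, \bx_{-u}^*)$ for $\mu_u$-a.e.\ $\bx_u$ in $\{f_\bX(\cdot, \bx_{-u}^*) > 0\}$.

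Now fix $\bx_{-u}^* \in G$ and any sequence $\bx_{-u}^{(n)} \to \bx_{-u}^*$, and write $g_n := g_{\bx_{-u}^{(n)}}$ and $g := g_{\bx_{-u}^*}$. The joint continuity just established gives $f_\bX(\bx_u, \bx_{-u}^{(n)}) \to f_\bX(\bx_u, \bx_{-u}^*)$ for $\mu_u$-a.e.\ $\bx_u$ in $\{g > 0\}$. Bounded convergence (using $f_\bX \leq M$ and $\mu_u(\cX_u) < \infty$) together with Fatou's lemma then yields $f_{\bX_{-u}}(\bx_{-u}^{(n)}) \to f_{\bX_{-u}}(\bx_{-u}^*) > 0$, hence $g_n \to g$ pointwise on $\{g > 0\}$. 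Since $g_n$ and $g$ are probability densities with $\int g_n \, d\mu_u = 1 = \int g \, d\mu_u$, Scheff\'e's theorem applied on $\{g > 0\}$, combined with the vanishing of $\int_{\{g=0\}} g_n \, d\mu_u$ forced by matching total mass, gives $\|g_n - g\|_{L^1(\mu_u)} \to 0$. Thus $\psi(\bx_{-u}^{(n)}) \to \psi(\bx_{-u}^*)$ in total variation, and a fortiori weakly.

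The main delicacy is that on $\{g=0\}$ the $\PP_\bX$-a.e.\ continuity assumption provides no direct information about pointwise behaviour of $f_\bX(\cdot, \bx_{-u}^{(n)})$. The key observation is that one does not need $\mu_u$-a.e.\ convergence there: both $g_n$ and $g$ have total $\mu_u$-integral one, so any asymptotic mass ``missing'' on $\{g > 0\}$ would have to reappear on $\{g = 0\}$, which the Fatou lower bound on $\{g > 0\}$ rules out. This is what makes the bounded, $\PP_\bX$-a.e.\ continuous density hypothesis sufficient to close the $L^1$ estimate and conclude weak continuity of $\psi$ at $\PP_{\bX_{-u}}$-a.e.\ point.
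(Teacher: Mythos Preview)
Your overall route is the same as the paper's: build the conditional law from the ratio $g_{\bx_{-u}}(\bx_u)=f_\bX(\bx_u,\bx_{-u})/f_{\bX_{-u}}(\bx_{-u})$, argue that along any sequence $\bx_{-u}^{(n)}\to\bx_{-u}^*$ the densities $g_{\bx_{-u}^{(n)}}$ converge to $g_{\bx_{-u}^*}$, and deduce weak convergence. The only methodological difference is your endgame: you invoke Scheff\'e's theorem to obtain $L^1(\mu_u)$ (hence total-variation) convergence, whereas the paper integrates against bounded test functions and applies dominated convergence directly. That variation is harmless and in fact yields a slightly stronger conclusion.

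There is, however, a genuine gap at the step ``bounded convergence together with Fatou's lemma yields $f_{\bX_{-u}}(\bx_{-u}^{(n)})\to f_{\bX_{-u}}(\bx_{-u}^*)$.'' From $\PP_\bX$-a.e.\ continuity you only control $f_\bX(\cdot,\bx_{-u}^{(n)})$ on the set $\{g>0\}$; bounded convergence there plus Fatou on the complement gives merely $\liminf_n f_{\bX_{-u}}(\bx_{-u}^{(n)})\ge f_{\bX_{-u}}(\bx_{-u}^*)$, not the full limit. The contribution $\int_{\{g=0\}}f_\bX(\bx_u,\bx_{-u}^{(n)})\,d\mu_u(\bx_u)$ need not tend to zero: for instance, with $\cX_u=[0,2]$, $\cX_{-u}=[0,1]$, Lebesgue measures, and $f_\bX(x_u,x_{-u})=\1_{[0,1]}(x_u)+2\,\1_{(1,2]}(x_u)\1_{\Q}(x_{-u})$, the density is bounded and $\PP_\bX$-a.e.\ continuous, yet along any rational sequence $x_{-u}^{(n)}\to x_{-u}^*$ irrational one has $f_{\bX_{-u}}(x_{-u}^{(n)})=3\not\to 1=f_{\bX_{-u}}(x_{-u}^*)$, so $g_n\not\to g$ and your Scheff\'e and mass-balance arguments both break down. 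The paper's proof is terse at exactly this point (it asserts $\PP_{\bX_{-u}}$-a.e.\ continuity of $f_{\bX_{-u}}$ ``thanks to the dominated convergence theorem'' without isolating the $\{g=0\}$ issue), so your argument matches it in structure; but your specific justification via Fatou does not deliver the two-sided limit you need, and the final paragraph's circular appeal to total mass cannot close the gap because it already presupposes $g_n\to g$ on $\{g>0\}$, which in turn requires the very convergence of $f_{\bX_{-u}}(\bx_{-u}^{(n)})$ that is in question.
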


\begin{proof}
We assumed that there exists a version of $f_{\bX}$ which is bounded and $\PP_{\bX}$-a.e. continuous. Let
$$
f_{\bX_{-u}}(\bx_{-u}):=\int_{\mathcal{X}_u}f_{\bX}(\bx_u,\bx_{-u})d\mu_u(\bx_u),
$$
which is bounded by $\mu_u(\mathcal{X}_u)\|f_\bX\|_{\infty}$ and is a $\PP_{\bX_{-u}}$-a.e. continuous (thanks to the dominated converging Theorem) version of the density of $\bX_{-u}$ with respect to $\mu_{-u}$.
Let $\bx_{-u}\in \cX_{-u}$ such that $f_{\bX_{-u}}(\bx_{-u})\leq \| f_{\bX_{-u}} \|_{\infty}$, $f_{\bX_{-u}}(\bx_{-u})>0$ and such that $f_{\bX_{-u}}$ is continuous at $\bx_{-u}$. We have that
$$
f_{\bX_u|\bX_{-u}=\bx_{-u}}(\bx_u):=\frac{f_\bX(\bx_u,\bx_{-u})}{f_{\bX_{-u}}(\bx_{-u})}
$$
is a version of the density of $\bX_u$ conditionally to $\bX_{-u}=\bx_{-u}$ (defined for almost all $\bx_{-u}$).
Let $(\bx_{-u}^{(n)})$ be a sequence converging to $\bx_{-u}$. There exists $n_0$ such that for all $n\geq n_0$, $f_{\bX_{-u}}(\bx_{-u}^{(n)})>0$. Thus, by continuity of $f$ which respect to $\bx_{-u}$ and of $f_{\bX_{-u}}$, we have $f_{\bX_u|\bX_{-u}=\bx_{-u}}(\bx_u)=\lim_{n\rightarrow+\infty} f_{\bX_u|\bX_{-u}=\bx_{-u}^{(n)}}(\bx_u)$ for almost all $\bx_u$. Then, using the dominated converging Theorem, 
$$
\mathcal{L}(\bX_{u}|\bX_{-u}=\bx_{-u}^{(n)})\overset{weakly}{\underset{N\rightarrow+\infty}{\longrightarrow}}\mathcal{L}(\bX_{u}|\bX_{-u}=\bx_{-u}).
$$
\end{proof}

\begin{rmk}
The assumption "$\bX=(\bX_u,\bX_{-u})$ has a bounded density $f_\bX$ with respect to a finite measure $\mu=\bigotimes_{i=1}^p\mu_i$, which is continuous $\PP_\bX$-a.e." is only used in the proof of Lemma \ref{lm_continuite_loi_condi}. It would be interesting in future work to prove \ref{lm_continuite_loi_condi} with a weaker assumption.
\end{rmk}
\begin{rmk}There exists a different proof of Lemma \ref{lm_continuite_loi_condi} if we assume that $\mu$ is regular. Theorem 8.1 of \cite{tjur_conditional_1974} ensures that the conditional distribution in the sense of Tjur is defined for all $\bx_{-u}$ such that $f_{\bX_{-u}}>0$ (and not only for almost all $\bx_{-u}$) and the continuity of $f_{\bX_u|\bX_{-u}=\bx_{-u}}(\bx_u)$ with respect to $\bx_{-u}$ comes from Theorem 22.1 of \cite{tjur_conditional_1974}.
\end{rmk}

\begin{rmk}
To avoid confusion, we can now define $\mathcal{L}(\bX_u|\bX_{-u}=\bx_{-u})$ as the probability measure of density $\frac{f(\cdot,\bx_{-u})}{f_{\bX_{-u}}(\bx_{-u})}$, which is defined for all (and not "almost all") $\bx_{-u}$ in $\{f_{\bX_{-u}}>0\}$.
\end{rmk}

\begin{prop}\label{prop_asymp_sans_bias}
If
$$
\mathcal{L}(\bX_u|\bX_{-u}=.):(\cX_{-u},d_{-u})\longrightarrow (\mathcal{M}_1(\cX_u),\mathcal{T}(weak))
$$
is continuous (where $\mathcal{T}(weak)$ is the topology of weak convergence) almost everywhere, then, for almost all $\left( (\bx_{-u}^{(n)})_n,\bh \right)$, we have
\begin{equation}\label{eq1}
\E\left(\widehat{E}_{u,1}\left| (\bX_{-u}^{(n)})_n=(\bx_{-u}^{(n)})_n, \bk=\bh   \right. \right)\underset{N\rightarrow+\infty}{\longrightarrow} \V(Y|\bX_{-u}=\bx_{-u}^{(1)})
\end{equation}
and,
\begin{equation}\label{eq2}
\E(\widehat{E}_{u,1}) \underset{N\rightarrow+\infty}{\longrightarrow} E_u .
\end{equation}
\end{prop}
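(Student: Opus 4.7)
The plan is to first prove (1) by identifying $\E(\widehat{E}_{u,1} \mid (\bX_{-u}^{(n)})_n, \bk)$ as an integral of $\Phi$ against a product of conditional distributions indexed by the nearest-neighbour indices, and then to deduce (2) by an outer dominated-convergence argument.

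First I would fix a realisation $((\bx_{-u}^{(n)})_n, \bh)$ in the full-probability set where Lemma \ref{lm_conv_voisin} applies for each of the indices $h_N(1),\ldots,h_N(N_I)$ (so that $\bx_{-u}^{(h_N(i))}\to \bx_{-u}^{(1)}$) and where the weak-continuity hypothesis holds at $\bx_{-u}^{(1)}$. Combining Lemmas \ref{lm_gen_ind}--\ref{lm_gen_transfert} with the i.i.d.\ structure of $(\bX^{(n)})$ and the independence of $\bk$ from $(\bX_u^{(n)})$ given $(\bX_{-u}^{(n)})$ (Assumption \ref{assum_indep}), the variables $\bX_u^{(h_N(1))},\ldots,\bX_u^{(h_N(N_I))}$ are, conditionally on the conditioning event, independent with $\bX_u^{(h_N(i))}\sim \PP_{\bX_u\mid\bX_{-u}=\bx_{-u}^{(h_N(i))}}$; here the fact that the $h_N(i)$ are pairwise distinct is crucial. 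Consequently $\E(\widehat{E}_{u,1}\mid (\bX_{-u}^{(n)})_n=(\bx_{-u}^{(n)})_n,\bk=\bh)$ equals an integral of $\Phi$ against $\bigotimes_{i=1}^{N_I}\PP_{\bX_u\mid\bX_{-u}=\bx_{-u}^{(h_N(i))}}$ (for the ``knn'' version the $-u$-components are frozen at the values $\bx_{-u}^{(h_N(i))}$ supplied by the conditioning).

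The hypothesis and Lemma \ref{lm_conv_voisin} yield weak convergence of each factor to $\PP_{\bX_u\mid\bX_{-u}=\bx_{-u}^{(1)}}$, hence of the product. The main obstacle is that $\Phi$ is merely bounded, not continuous, since $f$ is only assumed bounded, so the Portmanteau theorem does not apply directly. I would bypass this by exploiting Assumption \ref{assum_polish} as in the proof of Lemma \ref{lm_continuite_loi_condi}: the conditional densities $\by_u\mapsto f_\bX(\by_u,\bx_{-u}^{(h_N(i))})/f_{\bX_{-u}}(\bx_{-u}^{(h_N(i))})$ converge pointwise $\mu_u$-a.e.\ and are, for all $N$ large, uniformly dominated by $2\|f_\bX\|_\infty/f_{\bX_{-u}}(\bx_{-u}^{(1)})$ (using continuity and strict positivity of $f_{\bX_{-u}}$ at $\bx_{-u}^{(1)}$, which hold $\PP_{\bX_{-u}}$-a.s.). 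Scheffé's lemma then upgrades weak convergence to total variation, so that the integral of the bounded measurable $\Phi$ against the product measure converges to its integral against the limit product measure.

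By construction of $\Phi^{mix}$ and $\Phi^{knn}$, the limiting integral is the expectation of the unbiased sample variance of $N_I$ i.i.d.\ copies of $Y$ under the conditional law given $\bX_{-u}=\bx_{-u}^{(1)}$, and therefore equals $\V(Y\mid\bX_{-u}=\bx_{-u}^{(1)})$, proving \eqref{eq1}. For \eqref{eq2}, the tower property gives $\E(\widehat{E}_{u,1})=\E\bigl(\E(\widehat{E}_{u,1}\mid(\bX_{-u}^{(n)})_n,\bk)\bigr)$; the inner conditional expectation is uniformly bounded (since $\Phi$ is bounded) and converges almost surely by \eqref{eq1}, so dominated convergence yields
\[
\E(\widehat{E}_{u,1})\longrightarrow \E\bigl(\V(Y\mid \bX_{-u}^{(1)})\bigr)=E_u.
\]
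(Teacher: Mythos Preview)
Your proof follows the same route as the paper's: identify the conditional law of $(\bX_u^{(h_N(i))})_{i\le N_I}$ as the product $\bigotimes_i \PP_{\bX_u\mid\bX_{-u}=\bx_{-u}^{(h_N(i))}}$ (the paper packages this step as Lemmas~\ref{lm_tildeX} and~\ref{lm_loi_KNN} together with an auxiliary vector $\bZ$), use Lemma~\ref{lm_conv_voisin} and the continuity hypothesis to pass to the limit, and deduce \eqref{eq2} by dominated convergence. Your explicit use of Scheff\'e to upgrade weak to total-variation convergence is in fact a refinement of the paper's presentation: the paper only establishes weak convergence of $\mathcal{L}((\bX^{(k_N(i))})_i\mid\cdot)$ via Slutsky and then writes ``using that $\Phi$ is bounded,'' which tacitly requires continuity of $\Phi$, whereas your argument handles arbitrary bounded $\Phi$ (at least for the \emph{mix} estimator; for the \emph{knn} version the integrand also depends on the moving points $\bx_{-u}^{(h_N(i))}$, a subtlety the paper leaves equally open).
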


\begin{proof}
Let $\bZ=(\bZ_1,...,\bZ_{N_I}):(\Omega,\mathcal{A})\rightarrow(\cX^{N_I},\mathcal{E}^{\otimes N_I})$ measurable, where $\mathcal{E}$ is the $\sigma$-algebra on $\mathcal{X}$, such that for almost all $\left((\bx_{-u}^{(n)})_n, \bh \right)$, we have 
$$
\mathcal{L}\left(\bZ|(\bX_{-u}^{(n)})_n=(\bx_{-u}^{(n)})_n,\bk=\bh\right)=\bigotimes_{i=1}^{N_I}\mathcal{L}(\bX^{(1)}|\bX^{(1)}_{-u}=\bx^{(1)}_{-u}).
$$
It suffices to show that, for almost all $\left( (\bx_{-u}^{(n)})_n, \bh \right)$,
\begin{equation}\label{conv_KNN_sachant}
(\bX^{(k_N(i))})_{i\leq N_I}
\overset{\mathcal{L}_{|(\bX_{-u}^{(n)})_n=(\bx_{-u}^{(n)})_n,\bk=\bh}}{\underset{N\rightarrow+\infty}{\longrightarrow}}\bZ.
\end{equation}
Indeed, if Equation \eqref{conv_KNN_sachant} is true, then, using that $\Phi$ is bounded,
\begin{eqnarray*}
& & \E\left(\widehat{E}_{u,1}\right.\left| (\bX_{-u}^{(n)})_n=(\bx_{-u}^{(n)})_n, \bk=\bh   \right.) \\
&=& \E\left[\Phi\left( (\bX^{(k_N(i))})_{i\leq N_I} \right)\right.\left| (\bX_{-u}^{(n)})_n=(\bx_{-u}^{(n)})_n, \bk=\bh   \right]\\
& \underset{ N\rightarrow+\infty}{\longrightarrow}&  \E(\Phi(\bZ)\left| (\bX_{-u}^{(n)})_n=(\bx_{-u}^{(n)})_n, \bk=\bh   \right.)\\
&=&\V(Y|\bX_{-u}=\bx_{-u}^{(1)}),
\end{eqnarray*}
by definition of $\bZ$ and of $\Phi$.
Thus, we have Equation \ref{eq1}.
Furthermore, using dominated convergence theorem, integrating on $\left((\bx_{-u}^{(n)})_n,\bh\right)$, we obtain Equation \ref{eq2}.

Thus, it remains to show that conditionally to $(\bX_{-u}^{(n)})_n=(\bx_{-u}^{(n)})_n,\bk=\bh$, the random vector $(\bX^{(k_N(i))})_{i\leq N_I}$ converges in distribution to $\bZ$. We prove this convergence step by step.
\begin{lm}\label{lm_tildeX}
For almost all $(\bx_{-u}^{(n)})_n$,
$$
\mathcal{L}((\bX_u^{(n)})_n|(\bX_{-u}^{(n)})_n=(\bx_{-u}^{(n)})_n)= \bigotimes_{n\geq 1}\mathcal{L}(\bX_u|\bX_{-u}=\bx_{-u}^{(n)}).$$
\end{lm}

\begin{proof}
Let $(\tilde{\bX}_{-u}^{(n)})_n:\Omega\rightarrow \cX_{-u}^\N$ be an i.i.d. sequence of distribution $\mathcal{L}( \bX_{-u})$. Then, we let $(\tilde{\bX}_{u}^{(n)})_n:\Omega\rightarrow \cX_{u}^\N$ be a sequence with conditional distribution 
$$
\mathcal{L}((\tilde{\bX}_u^{(n)})_n|(\tilde{\bX}_{-u}^{(n)})_n=(\bx_{-u}^{(n)})_n)= \bigotimes_{n\geq 1}\mathcal{L}(\bX_u|\bX_{-u}=\bx_{-u}^{(n)}).$$
We just have to prove that $(\tilde{\bX}^{(n)})_n$ is an i.i.d. sample of distribution $\mathcal{L}(\bX)$.

Each $\tilde{\bX}^{(n)}$ has a distribution $\mathcal{L}(\bX)$ because for all bounded measurable $\phi$,
\begin{eqnarray*}
\E(\phi(\tilde{\bX}^{(n)}))&=& \int_\Omega \phi(\tilde{\bX}^{(n)}(\omega))d\PP(\omega)\\
&=& \int_{\cX_u\times \cX_{-u}} \phi(\bx_u,\bx_{-u})d\PP_{(\tilde{\bX}_u,\tilde{\bX}_{-u})}(\bx_u,\bx_{-u})\\
&=&\int_{\cX_{-u}}\left( \int_{\cX_{u}}  \phi(\bx_u,\bx_{-u})d\PP_{\bX_u|\bX_{-u}=\bx_{-u}}(\bx_u)\right) d\PP_{\bX_{-u}}(\bx_{-u})\\
&=&\int_\cX \phi(\bx) d\PP_\bX(\bx).
\end{eqnarray*}
Moreover, $(\tilde{\bX}^{(n)})_n$ are independent because if $n\neq m$, then, for all bounded Borel functions $\phi_1$ and $\phi_2$, we have:
\begin{eqnarray*}
&&\E(\phi_1(\tilde{\bX}^{(n)})\phi_2(\tilde{\bX}^{(m)}))\\
&=&\int_{\cX_{u}^2 \times \cX_{-u}^2}\phi_1(\bx_u^{(n)},\bx_{-u}^{(n)}) \phi_2(\bx_u^{(m)},\bx_{-u}^{(m)} )d\PP_{(\tilde{\bX}_u^{(n)},\tilde{\bX}_u^{(m)},\tilde{\bX}_{-u}^{(n)},\tilde{\bX}_{-u}^{(m)})}(\bx_u^{(n)},\bx_u^{(m)},\bx_{-u}^{(n)},\bx_{-u}^{(m)})\\
&=&\int_{\cX_{-u}^2}\left( \int_{\cX_{u}^2}\phi_1(\bx_u^{(n)},\bx_{-u}^{(n)}) \phi_2(\bx_u^{(m)},\bx_{-u}^{(m)} )d\PP_{(\tilde{\bX}_u^{(n)},\tilde{\bX}_{u}^{(m)})|(\tilde{\bX}_{-u}^{(n)},\tilde{\bX}_{-u}^{(m)})=(\bx_{-u}^{(n)},\bx_{-u}^{(m)})} (\bx_u^{(n)},\bx_u^{(m)}) \right) d\PP_{(\tilde{\bX}_{-u}^{(n)},\tilde{\bX}_{-u}^{(m)})} (\bx_{-u}^{(n)},\bx_{-u}^{(m)})\\
&=&\int_{\cX_{-u}^2}\left( \int_{\cX_{u}^2}\phi_1(\bx_u^{(n)},\bx_{-u}^{(n)}) \phi_2(\bx_u^{(m)},\bx_{-u}^{(m)} )d\PP_{\bX_u|\bX_{-u}=\bx_{-u}^{(n)}}\otimes\PP_{\bX_u|\bX_{-u}=\bx_{-u}^{(m)}} (\bx_u^{(n)},\bx_u^{(m)}) \right) d\PP_{\bX_{-u}}^{\otimes 2} (\bx_{-u}^{(n)},\bx_{-u}^{(m)})\\
&=&\int_{\cX_{-u}^2}\left( \int_{\cX_{u}}\phi_1(\bx_u^{(n)},\bx_{-u}^{(n)}) d\PP_{\bX_u|\bX_{-u}=\bx_{-u}^{(n)}} (\bx_u^{(n)}) \right)\\
& & \left( \int_{\cX_{u}}\phi_2(\bx_u^{(m)},\bx_{-u}^{(m)}) d\PP_{\bX_u|\bX_{-u}=\bx_{-u}^{(m)}} (\bx_u^{(m)}) \right)  d\PP_{\bX_{-u}}^{\otimes 2} (\bx_{-u}^{(n)},\bx_{-u}^{(m)})\\
&=&\int_{\cX_{-u}}\left( \int_{\cX_{u}}\phi_1(\bx_u^{(n)},\bx_{-u}^{(n)}) d\PP_{\tilde{\bX}_u|\tilde{\bX}_{-u}=\bx_{-u}^{(n)}} (\bx_u^{(n)}) \right) d\PP_{\tilde{\bX}_{-u}} (\bx_{-u}^{(n)})\\
& & \left( \int_{\cX_{u}}\phi_2(\bx_u^{(m)},\bx_{-u}^{(m)}) d\PP_{\tilde{\bX}_u|\tilde{\bX}_{-u}=\bx_{-u}^{(m)}} (\bx_u^{(m)}) \right)  d\PP_{\tilde{\bX}_{-u}} (\bx_{-u}^{(m)})\\
&=&\E(\phi_1(\tilde{\bX}^{(n)}))\E(\phi_2(\tilde{\bX}^{(m)})).
\end{eqnarray*} 
The above calculation can be extended to finite products of more than two terms.
That concludes the proof of Lemma \ref{lm_tildeX}.
\end{proof}

\begin{lm}\label{lm_loi_KNN}
For almost all $\left((\bx_{-u}^{(n)})_n,\bh\right)$, we have:
\begin{eqnarray*}
 \mathcal{L}\left((\bX_u^{(k_N(i))})_{i\leq N_I}|(\bX_{-u}^{(n)})_n=(\bx_{-u}^{(n)})_n,\bk=\bh\right)=\bigotimes_{i=1}^{N_I}\mathcal{L}\left( \bX_u|\bX_{-u}=\bx_{-u}^{(h_N(i))} \right).
\end{eqnarray*}
\end{lm}

\begin{proof}
For all bounded Borel function $\phi$, 
\begin{eqnarray*}
& & \E \left(\phi((\bX_u^{(k_N(i))})_{i\leq N_I})|(\bX_{-u}^{(n)})_n=(\bx_{-u}^{(n)})_n,\bk=\bh \right)\\
&=&\E\left(\left. \phi\left((\bX_u^{(k_N(i))})_{i\leq N_I})\right)\right| (\bX_{-u}^{(n)})_n=(\bx_{-u}^{(n)})_n, \left(k_{N'}(i)\right)_{i\leq N_I,N'\in \N^*}=\left(h_{N'}(i)\right)_{i\leq N_I,N'\in \N^*}\right)\\
&=&\E\left(\left. \phi\left((\bX_u^{(k_N(i))})_{i\leq N_I}\right)\right| (\bX_{-u}^{(n)})_n=(\bx_{-u}^{(n)})_n, \left(k_{N}(i)\right)_{i\leq N_I}=\left(h_{N}(i)\right)_{i\leq N_I}\right)\\
&=&\E\left( \phi\left((\bX_u^{(h_N(i))})_{i\leq N_I}\right)|(\bX_{-u}^{(n)})_n=(\bx_{-u}^{(n)})_n\right),
\end{eqnarray*}
using Lemmas \ref{lm_gen_ind} and \ref{lm_gen_transfert} conditionally to $(\bX_{-u}^{(n)})_n=(\bx_{-u}^{(n)})_n$.
Then,
\begin{eqnarray*}
&&\E\left( \phi\left((\bX_u^{(h_N(i))})_{i\leq N_I}\right)|(\bX_{-u}^{(n)})_n=(\bx_{-u}^{(n)})_n\right)\\
&=&\int_{\cX_u^{N_I}}\phi(\bx_u^{(1)},...,\bx_u^{(N_I)})d\PP_{(\bX_u^{(h_N(i))})_{i\leq N_I}|(\bX_{-u}^{(n)})_n=(\bx_{-u}^{(n)})_n}(\bx_u^{(1)},...,\bx_u^{(N_I)})\\
&=&\int_{\cX_u^{N_I}}\phi(\bx_u^{(1)},...,\bx_u^{(N_I)}) d \bigotimes_{i=1}^{N_I}\PP_{\bX_u|\bX_{-u}=\bx_{-u}^{(h_N(i))}}(\bx_u^{(1)},...,\bx_u^{(N_I)}).
\end{eqnarray*}
That concludes the proof of Lemma \ref{lm_loi_KNN}.
\end{proof}

Recall that $\bX_{-u}^{(k_N(i)))}\underset{N\rightarrow+\infty}{\longrightarrow}\bX_{-u}^{(1)}$ $\PP$-a.e., thus, for almost all $\left((\bx_{-u}^{(n)})_n,\bh\right)$,
$$
\bx_{-u}^{(h_N(i))}\overset{}{\underset{N\rightarrow+\infty}{\longrightarrow}}\bx_{-u}^{(1)}.
$$
Thus, using the continuity of the conditional distribution given by Lemma \ref{lm_continuite_loi_condi}, for almost all $\left((\bx_{-u}^{(n)})_n,\bh\right)$, we have,
$$
\mathcal{L}(\bX_{u}|\bX_{-u}=\bx_{-u}^{(h_N(i))})\overset{weakly}{\underset{N\rightarrow+\infty}{\longrightarrow}}\mathcal{L}(\bX_{u}|\bX_{-u}=\bx_{-u}^{(1)}).
$$
Thus, for almost all $\left((\bx_{-u}^{(n)})_n,\bh\right)$,
$$
\bigotimes_{i=1}^{N_I}\mathcal{L}(\bX_{u}|\bX_{-u}=\bx_{-u}^{(h_N(i))})\overset{weakly}{\underset{N\rightarrow+\infty}{\longrightarrow}}\bigotimes_{i=1}^{N_I}\mathcal{L}(\bX_{u}|\bX_{-u}=\bx_{-u}^{(1)})=\mathcal{L}(\bZ_u|\bX_{-u}^{(1)}=\bx_{-u}^{(1)}).
$$
So, using Lemma \ref{lm_loi_KNN}, for almost all $\left((\bx_{-u}^{(n)})_n,\bh\right)$,
\begin{eqnarray*}
\mathcal{L}\left((\bX_u^{(k_N(i))})_{i\leq N_I}|(\bX_{-u}^{(n)})_n=(\bx_{-u}^{(n)})_n,\bk=\bh\right)
\overset{weakly}{\underset{N\rightarrow+\infty}{\longrightarrow}} \mathcal{L}(\bZ_u|\bX_{-u}^{(1)}=\bx_{-u}^{(1)}).
\end{eqnarray*}
So, for almost all $\left((\bx_{-u}^{(n)})_n,\bh\right)$,
\begin{eqnarray*}
\mathcal{L}\left((\bX_u^{(k_N(i))})_{i\leq N_I}|(\bX_{-u}^{(n)})_n=(\bx_{-u}^{(n)})_n,\bk=\bh\right)
\overset{weakly}{\underset{N\rightarrow+\infty}{\longrightarrow}}\mathcal{L}\left(\bZ_u|(\bX_{-u}^{(n)})_n=(\bx_{-u}^{(n)})_n,\bk=\bh\right).
\end{eqnarray*}

Using Slutsky lemma, for almost all $\left((\bx_{-u}^{(n)})_n,\bh\right)$,
\begin{eqnarray*}
\mathcal{L}\left((\bX^{(k_N(i))})_{i\leq N_I}|(\bX_{-u}^{(n)})_n=(\bx_{-u}^{(n)})_n,\bk=\bh\right)
\overset{weakly}{\underset{N\rightarrow+\infty}{\longrightarrow}}\mathcal{L}\left(\bZ|(\bX_{-u}^{(n)})_n=(\bx_{-u}^{(n)})_n,\bk=\bh\right),
\end{eqnarray*}
that concludes the proof of Proposition \ref{prop_asymp_sans_bias}.
\end{proof}

\begin{lm}
The value of $\V(\widehat{E}_{u,1,MC})$ is bounded by $128\|f\|_\infty^4$.
\end{lm}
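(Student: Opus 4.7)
The plan is straightforward: establish a deterministic pointwise bound $|\widehat{E}_{u,1,MC}| \leq C\|f\|_\infty^2$ for a suitable constant $C$, and then apply the elementary inequality $\V(Z) \leq \E(Z^2) \leq (\sup|Z|)^2$. Because the pointwise bound will hold on every realization of the random nearest-neighbour indices, no measurability, independence, or conditioning considerations enter the argument.

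For the pointwise bound, I would use the notation set up at the beginning of the appendix: $\widehat{E}_{u,1,MC}$ is $\Phi_{MC}$ evaluated at $(\bX^{(k_N(1,i))})_{i\in[1:N_I]}$, and $\Phi_{MC}$, in either its \emph{mix} version or its \emph{knn} version, is an empirical variance of $N_I$ values of $f$ applied to various combinations of entries of the input sample. Since $f$ takes values in $[-\|f\|_\infty,\|f\|_\infty]$ almost surely, those $N_I$ values all lie in this interval regardless of which indices $k_N(1,i)$ are selected. Hence each deviation from the sample mean is bounded by $2\|f\|_\infty$, each squared deviation by $4\|f\|_\infty^2$, and summing $N_I$ such squared deviations and dividing by $N_I-1$ gives the deterministic bound
\[
|\widehat{E}_{u,1,MC}| \leq \frac{4N_I}{N_I-1}\|f\|_\infty^2 \leq 8\|f\|_\infty^2,
\]
where the last inequality uses $N_I \geq 2$ (the worst case).

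Combining the two steps, $\V(\widehat{E}_{u,1,MC}) \leq \E(\widehat{E}_{u,1,MC}^2) \leq 64\|f\|_\infty^4 \leq 128\|f\|_\infty^4$. There is no genuine obstacle in the proof beyond tracking constants: the factor $128$ is not tight and is chosen with ample slack so that the same statement covers both the \emph{mix} and \emph{knn} variants of $\Phi_{MC}$ and any admissible value of $N_I$. The analogous bound for $\V(\widehat{V}_{u,1,PF})$ would follow by the same scheme, using $|f(\cdot)f(\cdot)-\E(Y)^2| \leq 2\|f\|_\infty^2$ in place of the empirical-variance bound.
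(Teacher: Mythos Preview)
Your proof is correct and follows essentially the same approach as the paper: bound $|\Phi|$ pointwise by $\frac{4N_I}{N_I-1}\|f\|_\infty^2\leq 8\|f\|_\infty^2$, then use $\V(Z)\leq \|\Phi\|_\infty^2$ (the paper writes the slightly looser $\V(Z)\leq 2\|\Phi\|_\infty^2$, which is where the factor $128$ rather than $64$ comes from). Your tracking of constants is in fact sharper than the paper's, and your remark about the Pick-and-Freeze analogue is also on target.
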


\begin{proof}
As $f$ is bounded, $\Phi$ is bounded by $\frac{1}{N_I-1}\sum_{k=1}^{N_I}(2\|f\|_\infty)^2=\frac{N_I}{N_I-1}4\|f\|_\infty^2\leq 8\|f\|_\infty^2$ so $\V(\widehat{E}_{u,1})$ is bounded by $2\|\Phi\|_\infty^2\leq 128\|f\|_\infty^4.$
\end{proof}

\begin{prop}\label{prop_covE12}
We have
$$
cov(\widehat{E}_{u,1},\widehat{E}_{u,2})\underset{N\rightarrow+\infty}{\longrightarrow}0.
$$
\end{prop}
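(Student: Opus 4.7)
The plan is to mimic closely the structure of Proposition~\ref{prop_asymp_sans_bias} but now keep track of both samples $\widehat{E}_{u,1}$ and $\widehat{E}_{u,2}$ simultaneously, showing that they become asymptotically independent. First I would write
$$
\cov(\widehat{E}_{u,1},\widehat{E}_{u,2})
= \E(\widehat{E}_{u,1}\widehat{E}_{u,2}) - \E(\widehat{E}_{u,1})\E(\widehat{E}_{u,2}),
$$
and note that by Proposition~\ref{prop_asymp_sans_bias} (together with the obvious symmetry under the transposition $1\leftrightarrow 2$), the second term tends to $E_u^2$. Thus the task reduces to proving $\E(\widehat{E}_{u,1}\widehat{E}_{u,2})\to E_u^2$.

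The key observation is that for $\PP$-almost every realization $((\bx_{-u}^{(n)})_n,\bh)$ we have $\bx_{-u}^{(1)}\neq\bx_{-u}^{(2)}$, so by the argument of Lemma~\ref{lm_conv_voisin} applied to both centers, the $N_I$-nearest-neighbour indices of $\bx_{-u}^{(1)}$ converge to $\bx_{-u}^{(1)}$ and those of $\bx_{-u}^{(2)}$ converge to $\bx_{-u}^{(2)}$; consequently for $N$ sufficiently large the index sets $\{h_N(1,i)\}_{i\leq N_I}$ and $\{h_N(2,i)\}_{i\leq N_I}$ are disjoint. Conditioning on $(\bX_{-u}^{(n)})_n=(\bx_{-u}^{(n)})_n$ and $\bk=\bh$, once these index sets are disjoint, the analogue of Lemma~\ref{lm_loi_KNN} gives
$$
\mathcal{L}\left((\bX_u^{(k_N(1,i))})_{i\leq N_I},(\bX_u^{(k_N(2,i))})_{i\leq N_I}\,\big|\,(\bX_{-u}^{(n)})_n=(\bx_{-u}^{(n)})_n,\bk=\bh\right)
=\bigotimes_{l=1}^{2}\bigotimes_{i=1}^{N_I}\mathcal{L}(\bX_u|\bX_{-u}=\bx_{-u}^{(h_N(l,i))}),
$$
the product structure being exactly what encodes the asymptotic independence of the two estimators.

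Next, combining the continuity of the conditional distribution (Lemma~\ref{lm_continuite_loi_condi}) with Slutsky as in Proposition~\ref{prop_asymp_sans_bias}, for almost all $((\bx_{-u}^{(n)})_n,\bh)$ this product measure converges weakly to $\mathcal{L}(\bZ^{(1)}|\bX_{-u}^{(1)}=\bx_{-u}^{(1)})\otimes \mathcal{L}(\bZ^{(2)}|\bX_{-u}^{(2)}=\bx_{-u}^{(2)})$, where the $\bZ^{(l)}$ are independent $N_I$-tuples of i.i.d.\ draws from the corresponding conditional distribution. Since the test function $(\bz^{(1)},\bz^{(2)})\mapsto \Phi(\bz^{(1)})\Phi(\bz^{(2)})$ is bounded (by $64\|f\|_\infty^4$), one obtains
$$
\E\bigl(\widehat{E}_{u,1}\widehat{E}_{u,2}\,\big|\,(\bX_{-u}^{(n)})_n=(\bx_{-u}^{(n)})_n,\bk=\bh\bigr)
\longrightarrow \V(Y|\bX_{-u}=\bx_{-u}^{(1)})\,\V(Y|\bX_{-u}=\bx_{-u}^{(2)}).
$$

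Finally, the boundedness of $\widehat{E}_{u,1}\widehat{E}_{u,2}$ allows me to remove the conditioning by dominated convergence, and the independence of $\bX_{-u}^{(1)}$ and $\bX_{-u}^{(2)}$ yields
$$
\E(\widehat{E}_{u,1}\widehat{E}_{u,2}) \longrightarrow \E\bigl[\V(Y|\bX_{-u}^{(1)})\bigr]\,\E\bigl[\V(Y|\bX_{-u}^{(2)})\bigr] = E_u^2,
$$
which combined with the first step gives the claim. The main technical obstacle will be the disjointness argument in the middle paragraph: one has to justify that on a full-measure event the two $N_I$-nearest-neighbour index sets eventually stop overlapping, so that the bookkeeping used in Lemmas~\ref{lm_tildeX} and~\ref{lm_loi_KNN} extends verbatim to the pair $(\widehat{E}_{u,1},\widehat{E}_{u,2})$; everything else is a rerun of the proof of Proposition~\ref{prop_asymp_sans_bias}.
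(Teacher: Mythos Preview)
Your argument is essentially the paper's proof for the case $\bx_{-u}^{(1)}\neq\bx_{-u}^{(2)}$, but it contains a genuine gap: the ``key observation'' that $\bx_{-u}^{(1)}\neq\bx_{-u}^{(2)}$ for $\PP$-almost every realization is \emph{false} under Assumption~\ref{assum_polish}. That assumption explicitly allows some input coordinates to be categorical, so $\bX_{-u}$ may have atoms and $\PP(\bX_{-u}^{(1)}=\bX_{-u}^{(2)})>0$ in general. On that event your disjointness argument collapses: when $\bX_{-u}^{(1)}=\bX_{-u}^{(2)}=\bx_{-u}$, the $N_I$ nearest neighbours of the two centers are drawn from the \emph{same} pool $\{n:\bX_{-u}^{(n)}=\bx_{-u}\}$, and the index sets can overlap with positive probability for every $N$.

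The paper isolates this case separately (after first applying the law of total covariance conditioned on $(\bX_{-u}^{(1)},\bX_{-u}^{(2)})$). It introduces $M_N=\#\{n\leq N:\bX_{-u}^{(n)}=\bx_{-u}\}$ and $H_N=\#\bigl(\{k_N(i)\}\cap\{k'_N(i)\}\bigr)$, shows via a simple combinatorial count that $\PP(H_N=0\mid M_N\geq m)\to 1$ as $m\to\infty$, and proves (Lemma~\ref{lm_conditionnellement_iid_2NI}) that on $\{H_N=0,\;M_N\geq m\}$ the two neighbour blocks are conditionally i.i.d., so that $\widehat{E}_{u,1}$ and $\widehat{E}_{u,2}$ are conditionally independent. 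Since $M_N-2$ is binomial with success probability $\PP(\bX_{-u}=\bx_{-u})>0$, this conditioning event has probability tending to $1$, and the remaining pieces are controlled by boundedness of $\Phi$. You need to add this entire branch; everything you wrote for $\bx_{-u}^{(1)}\neq\bx_{-u}^{(2)}$ is correct and matches the paper.
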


\begin{proof}
We use the law of total covariance
\begin{equation}
cov(\widehat{E}_{u,1},\widehat{E}_{u,2})=\E\left(cov\left(\widehat{E}_{u,1},\widehat{E}_{u,2}|\bX_{-u}^{(1)},\bX_{-u}^{(2)}\right)\right) +cov\left(\E(\widehat{E}_{u,1}|\bX_{-u}^{(1)},\bX_{-u}^{(2)}),\E(\widehat{E}_{u,2}|\bX_{-u}^{(1)},\bX_{-u}^{(2)})\right).
\end{equation}
We will show that both terms go to $0$ as $N$ goes to $+\infty$. Let us compute the second term. Using Proposition \ref{prop_asymp_sans_bias},
\begin{eqnarray*}
& & cov\left(\E(\widehat{E}_{u,1}|\bX_{-u}^{(1)},\bX_{-u}^{(2)}),\E(\widehat{E}_{u,2}|\bX_{-u}^{(1)},\bX_{-u}^{(2)})\right)\\
&=&\E\left(\E(\widehat{E}_{u,1}|\bX_{-u}^{(1)},\bX_{-u}^{(2)})\E(\widehat{E}_{u,2}|\bX_{-u}^{(1)},\bX_{-u}^{(2)})\right) - \E(\widehat{E}_{u,1})\E(\widehat{E}_{u,2})\\
& \underset{N\rightarrow +\infty}{\longrightarrow} & \E\left( \V(Y|\bX_{-u}=\bX_{-u}^{(1)})\V(Y|\bX_{-u}=\bX_{-u}^{(2)})\right)-E_u^2\\
&=&0.
\end{eqnarray*}
It remains to prove that $\E\left(cov\left(\widehat{E}_{u,1},\widehat{E}_{u,2}|\bX_{-u}^{(1)},\bX_{-u}^{(2)}\right)\right)$ goes to $0$. By dominated convergence theorem, it suffices to show that for almost all $(\bx_{-u}^{(1)},\bx_{-u}^{(2)})$, 
\begin{equation}\label{cov_conditionnelle12}
cov\left(\widehat{E}_{u,1},\widehat{E}_{u,2}|\bX_{-u}^{(1)}=\bx_{-u}^{(1)},\bX_{-u}^{(2)}=\bx_{-u}^{(2)}\right)\underset{N\rightarrow+\infty}{\longrightarrow}0.
\end{equation}
From now on, we aim to proving Equation \eqref{cov_conditionnelle12}.

First, we want to prove Equation \eqref{cov_conditionnelle12} for $\bx_{-u}^{(1)}\neq \bx_{-u}^{(2)}$. Using dominated convergence theorem and Proposition \ref{prop_asymp_sans_bias}, it will suffice to show that (conditionally to $\bX_{-u}^{(1)}=\bx_{-u}^{(1)},\;\bX_{-u}^{(2)}=\bx_{-u}^{(2)}$), for almost all $((\bx_{-u}^{(n)})_{n\geq 3},\bh,\bh')$,
$$
\E\left(\widehat{E}_{u,1},\widehat{E}_{u,2}|(\bX_{-u}^{(n)})_n=(\bx_{-u}^{(n)})_n,\bk=\bh,\bk'=\bh'\right)\underset{N\rightarrow+\infty}{\longrightarrow}\V\left(Y|\bX_{-u}=\bx_{-u}^{(1)}\right)\V\left(Y|\bX_{-u}=\bx_{-u}^{(2)} \right).
$$ 
Let 
$$
A:=\left\{\left((\bx_{-u}^{(n)})_n,\bh,\bh'\right)|\;\bx_{-u}^{h_N(N_1)}\underset{N\rightarrow+\infty}{\longrightarrow} \bx_{-u}^{(1)},\;\bx_{-u}^{h'_N(N_1)}\underset{N\rightarrow+\infty}{\longrightarrow} \bx_{-u}^{(2)} \right\}.
$$
The set $A$ has probability 1 thanks to Lemma \ref{lm_conv_voisin}. Let $\left((\bx_{-u}^{(n)})_n,\bh,\bh'\right) \in A$ be such that $\bx_{-u}^{(1)}\neq \bx_{-u}^{(2)}$ and let $\delta:=d_{-u}(\bx_{-u}^{(1)},\bx_{-u}^{(2)})\slash 2$. There exists $N_1$ such that for all $N\geq N_1$, 
$$
d_{-u}\left( \bx_{-u}^{(1)},\bx_{-u}^{(h_N(N_I))}\right)< \frac{\delta}{2},\;\;d_{-u}\left( \bx_{-u}^{(2)},\bx_{-u}^{(h_N'(N_I))}\right)< \frac{\delta}{2}.
$$
Thus, for all $N \geq N_1$,
\begin{eqnarray*}
& & E(\widehat{E}_{u,1}\widehat{E}_{u,2}|(\bX_{-u}^{(n)})_n=(\bx_{-u}^{(n)})_n,\bk=\bh,\bk'=\bh')\\
&=&\E\left[\Phi\left(\left.(\bX^{k_N(i)})_{i\leq N_I}\right)\Phi\left((\bX^{k'_N(i)})_{i\leq N_I}\right)\right|(\bX_{-u}^{(n)})_n=(\bx_{-u}^{(n)})_n,\bk=\bh,\bk'=\bh'\right]\\
&=&\E\left[\Phi\left(\left.(\bX^{k_N(i)})_{i\leq N_I}\right)\Phi\left((\bX^{k'_N(i)})_{i\leq N_I}\right)\right|(\bX_{-u}^{(n)})_n=(\bx_{-u}^{(n)})_n,\bk_N=\bh_N,\bk'_N=\bh'_N\right]\\
&=&\E\left[\Phi\left(\left.(\bX^{h_N(i)})_{i\leq N_I}\right)\Phi\left((\bX^{k'_N(i)})_{i\leq N_I}\right)\right|(\bX_{-u}^{(n)})_n=(\bx_{-u}^{(n)})_n,\bk'_N=\bh'_N\right]\\
&=&\E\left[\Phi\left(\left.(\bX^{h_N(i)})_{i\leq N_I}\right)\Phi\left((\bX^{h'_N(i)})_{i\leq N_I}\right)\right|(\bX_{-u}^{(n)})_n=(\bx_{-u}^{(n)})_n\right]\\
&=&\E\left[\Phi\left(\left. (\bx_{-u}^{h_N(i)})_{i\leq N_I},(\bX_{u}^{h_N(i)})_{i\leq N_I}\right)\Phi\left((\bx_{-u}^{h'_N(i)})_{i\leq N_I},(\bX_{u}^{h'_N(i)})_{i\leq N_I}\right)\right|(\bX_{-u}^{(n)})_n=(\bx_{-u}^{(n)})_n\right]\\
&=&\E\left[\Phi\left(\left. (\bx_{-u}^{h_N(i)})_{i\leq N_I},(\bX_{u}^{h_N(i)})_{i\leq N_I}\right)\right|(\bX_{-u}^{(n)})_n=(\bx_{-u}^{(n)})_n\right]\\
& &\E\left[\Phi\left(\left. (\bx_{-u}^{h'_N(i)})_{i\leq N_I},(\bX_{u}^{h'_N(i)})_{i\leq N_I}\right)\right|(\bX_{-u}^{(n)})_n=(\bx_{-u}^{(n)})_n\right]\\
&=&\E\left[\left. \widehat{E}_{u,1}\right|(\bX_{-u}^{(n)})_n=(\bx_{-u}^{(n)})_n,\bk=\bh\right]\E\left[\left.\widehat{E}_{u,2}\right|(\bX_{-u}^{(n)})_n=(\bx_{-u}^{(n)})_n,\bk'=\bh'\right]\\
& \underset{N\rightarrow + \infty}{\longrightarrow}&\V\left[\left. Y\right| \bX_{-u}=\bx_{-u}^{(1)} \right]\V\left[\left. Y\right| \bX_{-u}=\bx_{-u}^{(2)}\right],
\end{eqnarray*}
thanks to Proposition \ref{prop_asymp_sans_bias}.\\

Assume now that $\bX_{-u}^{(1)}=\bX_{-u}^{(2)}=\bx_{-u}$. We can assume without lost of generality that $\PP(\bX_{-u}=\bx_{-u})>0$ because if we write $H:=\{\bx_{-u}|\; \PP(\bX_{-u}=\bx_{-u})=0\}$, we have $\PP(\bX_{-u}^{(1)}=\bX_{-u}^{(2)}\in H)=0$. We have to show that
$$
\E \left(\widehat{E}_{u,1}\widehat{E}_{u,2}|\bX_{-u}^{(1)}=\bX_{-u}^{(2)}=\bx_{-u} \right)-\V(Y|\bX_{-u}=\bx_{-u})^2 \underset{N\rightarrow+\infty}{\longrightarrow}0.
$$
Let $\varepsilon>0$.\\

Let $M_N$ the number of observations which are equal to $\bx_{-u}$,
$$
M_N:=\# \{n\leq N\; :\;\bX_{-u}^{(n)}=\bx_{-u}\},
$$
and let $H_N$ be the number of nearest neighbours (up to $N_I$-nearest) shared by $\bX_{-u}^{(1)}$ and $\bX_{-u}^{(2)}$,
$$
H_N:=\# \left[ \{k_N(i)\; :\;i \leq N_I\}\cap \{k'_N(i)\; :\;i \leq N_I\}\right].
$$
If $M_n=m\geq 2N_I,\;\bX_{-u}^{(1)}=\bx_{-u}=\bX_{-u}^{(2)}$, then the $N_I$-nearest neighbours $\bk_N$ of $\bX_{-u}^{(1)}$ and $\bk_N'$ of $\bX_{-u}^{(2)}$ are independent and are samples of uniformly distributed variables on the same set of cardinal $m$, without replacement. Thus,
\begin{eqnarray*}
& & \PP(H_N=0|M_N=m,\bX_{-u}^{(1)}=\bX_{-u}^{(2)}=\bx_{-u})\\
&=&\frac{\begin{pmatrix}
m-N_I\\N_I
\end{pmatrix}}{\begin{pmatrix}
m\\ N_I
\end{pmatrix}}\\
&=&\frac{(m-2N_I+1)(m-2N_I+2)...(m-N_I)}{(m-N_I+1)(m-N_I+2)...m}
\\
& \underset{m\rightarrow+\infty}{\longrightarrow} &1.
\end{eqnarray*}
Thus, there exists $m_1$ such that 
\begin{equation}\label{eq_alpha_n}
\alpha_{m_1}:=\PP(H_N=0|M_N\geq m_1,\bX_{-u}^{(1)}=\bX_{-u}^{(2)}=\bx_{-u})>1-\frac{\varepsilon}{5\|\Phi\|_\infty^2}.
\end{equation}
So,
\begin{eqnarray*}
& & \E \left(\widehat{E}_{u,1}\widehat{E}_{u,2}|\bX_{-u}^{(1)}=\bX_{-u}^{(2)}=\bx_{-u} \right)\\
&=&\E \left(\widehat{E}_{u,1}\widehat{E}_{u,2}|\bX_{-u}^{(1)}=\bX_{-u}^{(2)}=\bx_{-u} ,M_N< m_1\right)\PP(M_N< m_1|\bX_{-u}^{(1)}=\bX_{-u}^{(2)}=\bx_{-u})\\
&& +\E \left(\widehat{E}_{u,1}\widehat{E}_{u,2}|\bX_{-u}^{(1)}=\bX_{-u}^{(2)}=\bx_{-u} ,M_N\geq m_1\right)\PP(M_N\geq m_1|\bX_{-u}^{(1)}=\bX_{-u}^{(2)}=\bx_{-u}).
\end{eqnarray*}
Let
$$
\beta_N:=\E \left(\widehat{E}_{u,1}\widehat{E}_{u,2}|\bX_{-u}^{(1)}=\bX_{-u}^{(2)}=\bx_{-u} ,M_N< m_1\right)\PP(M_N< m_1|\bX_{-u}^{(1)}=\bX_{-u}^{(2)}=\bx_{-u}).
$$
Conditionally to $\bX_{-u}^{(1)}=\bX_{-u}^{(2)}=\bx_{-u}$, we know that $M_N-2\sim\mathcal{B}\left(N-2,\PP(\bX_{-u}=\bx_{-u}) \right)$, the binomial distribution. Thus, there exists $N_1$ such that for all $N\geq N_1$,
\begin{equation}\label{eq_beta_n}
\PP\left( M_N <m_1 |\bX_{-u}^{(1)}=\bX_{-u}^{(2)}=\bx_{-u}\right)<\frac{\varepsilon}{5 \max(1,\|\Phi\|_\infty^2)},
\end{equation}
and so, for all $N \geq N_1$, $\beta_N< \varepsilon \slash 5$. Furthermore
\begin{eqnarray*}
& & \E \left(\widehat{E}_{u,1}\widehat{E}_{u,2}|\bX_{-u}^{(1)}=\bX_{-u}^{(2)}=\bx_{-u} ,M_N\geq m_1\right)\\
&=&\E \left(\widehat{E}_{u,1}\widehat{E}_{u,2}|\bX_{-u}^{(1)}=\bX_{-u}^{(2)}=\bx_{-u} ,M_N\geq m_1,H_N=0\right) \PP(H_N=0|\bX_{-u}^{(1)}=\bX_{-u}^{(2)}=\bx_{-u} ,M_N\geq m_1)\\
& & +\E \left(\widehat{E}_{u,1}\widehat{E}_{u,2}|\bX_{-u}^{(1)}=\bX_{-u}^{(2)}=\bx_{-u} ,M_N\geq m_1,H_N\geq 1\right) \PP(H_N\geq 1|\bX_{-u}^{(1)}=\bX_{-u}^{(2)}=\bx_{-u} ,M_N\geq m_1).
\end{eqnarray*}
Let
$$
\gamma_N:=\PP\left( M_N \geq m_1 |\bX_{-u}^{(1)}=\bX_{-u}^{(2)}=\bx_{-u}\right).
$$
Moreover, conditionally to $\bX_{-u}^{(1)}=\bX_{-u}^{(2)}=\bx_{-u},M_N\geq m_1$, $H_N=0$ implies that $\widehat{E}_{u,1} \ind \widehat{E}_{u,2}$ thanks to Lemma \ref{lm_conditionnellement_iid_2NI}.
\begin{lm}\label{lm_conditionnellement_iid_2NI}
Conditionally to $\bX_{-u}^{(1)}=\bX_{-u}^{(2)}=\bx_{-u},M_N\geq m_1$, $H_N=0$, the vector $\left( (\bX^{(k_N(i))})_{i\leq N_I},(\bX^{(k'_N(i))})_{i\leq N_I}\right)$ is composed of $2N_I$ i.i.d. random variables of distribution $\bX$ conditionally to $\bX_{-u}=\bx_{-u}$.
\end{lm}
\begin{proof}
We know that, conditionally to $\bX_{-u}^{(1)}=\bX_{-u}^{(2)}=\bx_{-u},M_N\geq m_1$, $H_N=0$, the vector $\left( (\bX_{-u}^{(k_N(i))})_{i\leq N_I},(\bX_{-u}^{(k'_N(i))})_{i\leq N_I}\right)$ is constant equal to $(\bx_{-u})_{i\leq 2N_I}$. It suffices to show that, conditionally to $\bX_{-u}^{(1)}=\bX_{-u}^{(2)}=\bx_{-u},M_N\geq m_1$, $H_N=0$, the vector $\left( (\bX_u^{(k_N(i))})_{i\leq N_I},(\bX_u^{(k'_N(i))})_{i\leq N_I}\right)$ is composed of $2N_I$ i.i.d. random variables of distribution $\bX$ conditionally to $\bX_{-u}=\bx_{-u}$.
Let $((\bx_{-u}^{(n)})_n,\bh_N,\bh'_N)$ such that $\bX_{-u}^{(1)}=\bX_{-u}^{(2)}=\bx_{-u},M_N\geq m_1$ and $H_N=0$. As $M_N\geq m_1\geq N_I$, for all $i\leq N_I$, we have $\bx_{-u}^{(k_N(i))}=\bx_{-u}=\bx_{-u}^{(k_N'(i))}$. As $H_N=0$, then, for all $i$ and $j$ smaller than $N_I$, $h_N(i)\neq h'_N(j)$. Thus, we have for any bounded Borel function $\phi$,
\begin{eqnarray*}
& & \E \left( \left. \phi \left[(\bX_u^{(k_N(i))})_{i\leq N_I},(\bX_u^{(k'_N(i))})_{i\leq N_I} \right] \right| (\bX_{-u}^{(n)})_n=(\bx_{-u}^{(n)})_n,\bk_N=\bh_N,\bk'_N=\bh'_N \right)\\
&=& \E \left( \left. \phi \left[(\bX_u^{(h_N(i))})_{i\leq N_I},(\bX_u^{(k'_N(i))})_{i\leq N_I} \right] \right| (\bX_{-u}^{(n)})_n=(\bx_{-u}^{(n)})_n,\bk'_N=\bh'_N \right)\\
&=& \E \left( \left. \phi \left[(\bX_u^{(h_N(i))})_{i\leq N_I},(\bX_u^{(h'_N(i))})_{i\leq N_I} \right] \right| (\bX_{-u}^{(n)})_n=(\bx_{-u}^{(n)})_n, \right)\\
&=& \E \left( \left. \phi \left[(\bX_u^{(h_N(i))})_{i\leq N_I},(\bX_u^{(h'_N(i))})_{i\leq N_I} \right] \right| (\bX_{-u}^{(h_N(i))})_{i\leq N_I}=(\bx_{-u})_{i\leq N_I},(\bX_{-u}^{(h'_N(i))})_{i\leq N_I}=(\bx_{-u})_{i\leq N_I} \right)\\
&=&\E \left( \left. \phi \left[(\bX_u^{(i)})_{i\leq N_I},(\bX_u^{(i+N_I)})_{i\leq N_I} \right] \right| (\bX_{-u}^{(i)})_{i\leq 2N_I}=(\bx_{-u})_{i\leq 2 N_I} \right).
\end{eqnarray*}
Thus,
\begin{eqnarray*}
& & \E \left( \left. \phi \left[(\bX_u^{(k_N(i))})_{i\leq N_I},(\bX_u^{(k'_N(i))})_{i\leq N_I} \right] \right|\bX_{-u}^{(1)}=\bX_{-u}^{(2)}=\bx_{-u},M_N\geq m_1,H_N=0 \right)\\
& =& \E \left\{\E \left( \left. \phi \left[(\bX_u^{(k_N(i))})_{i\leq N_I},(\bX_u^{(k'_N(i))})_{i\leq N_I} \right] \right|\bX_{-u}^{(1)}=\bX_{-u}^{(2)}=\bx_{-u},M_N\geq m_1,H_N=0, (\bX_{-u}^{(n)})_n,\bk,\bk' \right) \right\}\\
&=& \E \left\{ \E \left( \left. \phi \left[(\bX_u^{(i)})_{i\leq N_I},(\bX_u^{(i+N_I)})_{i\leq N_I} \right] \right| (\bX_{-u}^{(i)})_{i\leq 2N_I}=(\bx_{-u})_{i\leq 2N_I} \right) \right\}\\
&=&\E \left( \left. \phi \left[(\bX_u^{(i)})_{i\leq N_I},(\bX_u^{(i+N_I)})_{i\leq N_I} \right] \right| (\bX_{-u}^{(i)})_{i\leq 2N_I}=(\bx_{-u})_{i\leq 2N_I} \right),
\end{eqnarray*}
that concludes the proof of Lemma \ref{lm_conditionnellement_iid_2NI}.
\end{proof}
Thus
\begin{eqnarray*}
& & \E\left( \widehat{E}_{u,1} \widehat{E}_{u,2}|\bX_{-u}^{(1)}=\bX_{-u}^{(2)}=\bx_{-u},M_N\geq m_1,\;H_N=0\right)\\
&=&\E\left( \widehat{E}_{u,1}|\bX_{-u}^{(1)}=\bX_{-u}^{(2)}=\bx_{-u},M_N\geq m_1,\;H_N=0\right)^2
\end{eqnarray*}
and so, using Proposition \ref{prop_asymp_sans_bias}, there exists $N_2$ such that for all $N\geq N_2$,
\begin{equation}\label{eq_dernier_terme}
\left|\E\left(\widehat{E}_{u,1} \widehat{E}_{u,2}|\bX_{-u}^{(1)}=\bX_{-u}^{(2)}=\bx_{-u},M_N\geq m_1,\;H_N=0\right)-\V(Y|\bX_{-u}=\bx_{-u})^{2} \right|<\frac{\varepsilon}{5}.
\end{equation}
Thus, for all $N\geq \max(N_1,N_2)$,
\begin{eqnarray*}
& &\left|\E \left(\widehat{E}_{u,1} \widehat{E}_{u,2}|\bX_{-u}^{(1)}=\bX_{-u}^{(2)}=\bx_{-u} \right)-\V(Y|\bX_{-u}=\bx_{-u})^2 \right|\\
& \leq & |\beta_N|+ \left|\gamma_N \E\left(\widehat{E}_{u,1} \widehat{E}_{u,2}|\bX_{-u}^{(1)}=\bX_{-u}^{(2)}=\bx_{-u},M_N\geq m_1,\;H_N\geq 1 \right)(1-\alpha_{m_1}) \right| \\
& & +\left|\gamma_N\alpha_{m_1}\E\left(\widehat{E}_{u,1} \widehat{E}_{u,2}|\bX_{-u}^{(1)}=\bX_{-u}^{(2)}=\bx_{-u},M_N\geq m_1,\;H_N=0 \right)-\V(Y|\bX_{-u}=\bx_{-u})^2\right|.
\end{eqnarray*}
The upper-bound is a sum of three terms. The first one is bounded by $\varepsilon\slash 5$ using Equation \ref{eq_beta_n} and the second one is bounded by $\varepsilon\slash 5$ using Equation \ref{eq_alpha_n}. For the last one,  we use that, for all $C\in \R$,
$$
\gamma_N \alpha_{m_1} C=(\gamma_N \alpha_{m_1}-1)C+C.
$$
Thus,
\begin{eqnarray*}
& &\left|\E \left(\widehat{E}_{u,1} \widehat{E}_{u,2}|\bX_{-u}^{(1)}=\bX_{-u}^{(2)}=\bx_{-u} \right)-\V(Y|\bX_{-u}=\bx_{-u})^2 \right|\\
& \leq & \frac{\varepsilon}{5}+\frac{\varepsilon}{5}+\left|\gamma_N \alpha_{m_1}-1 \right|\|\Phi\|_\infty^2+\left|\E\left( \widehat{E}_{u,1} \widehat{E}_{u,2}|\bX_{-u}^{(1)}=\bX_{-u}^{(2)}=\bx_{-u},M_N\geq m_1,\;H_N=0 \right)-\V(Y|\bX_{-u}=\bx_{-u})^2 \right|\\
& \leq & \frac{3\varepsilon}{5}+\left(|\gamma_N-1|\alpha_N+|\alpha_N-1| \right)\|\Phi \|_\infty^2\;\;\text{\;\;using Equation \ref{eq_dernier_terme}}\\
& \leq & \varepsilon,
\end{eqnarray*}
using Equation \ref{eq_beta_n} and Equation \ref{eq_alpha_n}. Finally, we proved that
$$
\E \left(\widehat{E}_{u,1} \widehat{E}_{u,2}|\bX_{-u}^{(1)}=\bX_{-u}^{(2)}=\bx_{-u} \right)-\V(Y|\bX_{-u}=\bx_{-u})^2 \underset{N\rightarrow+\infty}{\longrightarrow}0.
$$
Hence, Equation \eqref{cov_conditionnelle12} is proved and the proof of Proposition \ref{prop_covE12} is concluded.
\end{proof}
 
\begin{prop}\label{prop_conv_proba_biaise}
We have
\begin{equation}
\widehat{E}_{u}-\E\left( \widehat{E}_{u,1} \right)\overset{\PP}{\underset{\substack{N\rightarrow+\infty,\\ N_u\rightarrow+\infty}}{\longrightarrow}}0.
\end{equation}
\end{prop}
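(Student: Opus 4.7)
The strategy is Chebyshev's inequality: since $\widehat{E}_u - \E(\widehat{E}_{u,1})$ has mean zero (see below), it suffices to show that $\V(\widehat{E}_u)$ tends to $0$ as $N, N_u \to +\infty$. The key structural fact I will use is that, combining the i.i.d. property of the underlying sample $(\bX^{(n)})_{n \in [1:N]}$ with the independent symmetric tie-breaking guaranteed by Assumption \ref{assum_indep}, the random vector $(\widehat{E}_{u,i})_{i \in [1:N]}$ is exchangeable. Since $(s(l))_{l \in [1:N_u]}$ is independent of everything else and uniform on $[1:N]$ (with or without replacement), this exchangeability immediately gives $\E(\widehat{E}_{u,s(l)}) = \E(\widehat{E}_{u,1})$ for every $l$, and hence $\E(\widehat{E}_u) = \E(\widehat{E}_{u,1})$.

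For the variance, I expand
\[
\V(\widehat{E}_u) = \frac{1}{N_u^2} \sum_{l=1}^{N_u}\sum_{l'=1}^{N_u} \cov\!\left(\widehat{E}_{u,s(l)},\widehat{E}_{u,s(l')}\right) = \frac{1}{N_u}\V(\widehat{E}_{u,s(1)}) + \frac{N_u-1}{N_u}\cov\!\left(\widehat{E}_{u,s(1)},\widehat{E}_{u,s(2)}\right),
\]
where I used that the $(s(l))_l$ are exchangeable to identify each diagonal and each off-diagonal term with a common value. Conditioning on the pair $(s(1),s(2))$ and invoking the exchangeability of $(\widehat{E}_{u,i})_i$, the off-diagonal covariance rewrites (in the ``with replacement'' case) as
\[
\cov\!\left(\widehat{E}_{u,s(1)},\widehat{E}_{u,s(2)}\right) = \frac{1}{N}\V(\widehat{E}_{u,1}) + \frac{N-1}{N}\cov(\widehat{E}_{u,1},\widehat{E}_{u,2}),
\]
and it is simply equal to $\cov(\widehat{E}_{u,1},\widehat{E}_{u,2})$ in the ``without replacement'' case. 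The same argument also yields $\V(\widehat{E}_{u,s(1)}) = \V(\widehat{E}_{u,1})$.

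Combining these identities produces the bound
\[
\V(\widehat{E}_u) \leq \frac{1}{N_u}\V(\widehat{E}_{u,1}) + \frac{1}{N}\V(\widehat{E}_{u,1}) + \left|\cov(\widehat{E}_{u,1},\widehat{E}_{u,2})\right|.
\]
Since $\V(\widehat{E}_{u,1}) \leq 128\|f\|_\infty^4$ by the lemma preceding the statement, the first two terms vanish as $N_u, N \to +\infty$, while the last term vanishes by Proposition \ref{prop_covE12}. Chebyshev's inequality then yields the claimed convergence in probability. The main delicate point is the careful handling of the random indices $(s(l))$: the clean resolution is to exploit exchangeability of $(\widehat{E}_{u,i})_{i \in [1:N]}$---a property that crucially relies on Assumption \ref{assum_indep} to handle the random tie-breaking in the nearest-neighbour selection---so that every covariance $\cov(\widehat{E}_{u,s(l)}, \widehat{E}_{u,s(l')})$ reduces to expressions involving only the two indices $1$ and $2$ that have already been controlled.
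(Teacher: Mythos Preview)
Your proof is correct and follows essentially the same route as the paper: Chebyshev, expand the variance into diagonal and off-diagonal covariance terms, condition on the random indices $(s(l))$ to reduce everything to $\V(\widehat{E}_{u,1})$ and $\cov(\widehat{E}_{u,1},\widehat{E}_{u,2})$, then invoke the boundedness lemma and Proposition~\ref{prop_covE12}. The only stylistic difference is that you name and use the exchangeability of $(\widehat{E}_{u,i})_{i\in[1:N]}$ explicitly, whereas the paper performs the same reduction by a direct case split on $\{s(i)=s(j)\}$ versus $\{s(i)\neq s(j)\}$; the resulting bounds are identical.
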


\begin{proof}
Let $\varepsilon >0$. By Chebyshev's inequality,
\begin{equation}
\PP\left( \left| \widehat{E}_{u}-\E\left( \widehat{E}_{u} \right) \right|>\varepsilon \right)\leq  \frac{\V(\widehat{E}_{u})}{\varepsilon^2}.
\end{equation}
If $(s(l))_{l\leq N_u}$ is a sample of uniformly distributed variables on $[1:N]$ with replacement, we remark that for all $i \neq j$,
\begin{eqnarray*}
&&  cov\left( \widehat{E}_{u,s(i)},\widehat{E}_{u,s(j)}\right) \\
&=&\E( \widehat{E}_{u,s(i)}\widehat{E}_{u,s(j)})-\E( \widehat{E}_{u,s(i)})\E(\widehat{E}_{u,s(j)})\\
&=& \E( \widehat{E}_{u,s(i)}\widehat{E}_{u,s(j)}|s(i)\neq s(j))\PP(s(i)\neq s(j))\\
&&+ \E( \widehat{E}_{u,s(i)}\widehat{E}_{u,s(j)}|s(i)= s(j))\PP(s(i)= s(j)) -\E( \widehat{E}_{u,s(i)})\E(\widehat{E}_{u,s(j)})\\
&=& \left[\E( \widehat{E}_{u,s(i)}\widehat{E}_{u,s(j)}|s(i)\neq s(j))-\E( \widehat{E}_{u,1})\E(\widehat{E}_{u,2})\right]\PP(s(i)\neq s(j))\\
&&+ \left[\E( \widehat{E}_{u,s(i)}\widehat{E}_{u,s(i)}|s(i)= s(j))-\E( \widehat{E}_{u,1})^2 \right]\PP(s(i)= s(j)) \\
&=& \left[\E( \widehat{E}_{u,1}\widehat{E}_{u,2}|s(i)=1, s(j)=2)-\E( \widehat{E}_{u,1})\E(\widehat{E}_{u,2})\right]\PP(s(i)\neq s(j))\\
&&+ \left[\E( \widehat{E}_{u,1}\widehat{E}_{u,1}|s(i)= s(j)=1)-\E( \widehat{E}_{u,1})^2 \right]\PP(s(i)= s(j)) \\
&=& cov\left( \widehat{E}_{u,1},\widehat{E}_{u,2}\right)\PP(s(i)\neq s(j)) +\V\left( \widehat{E}_{u,1}\right)\PP(s(i)= s(j)),
\end{eqnarray*}
thus
\begin{eqnarray*}
\V(\widehat{E}_{u})&=&\frac{1}{N_u^2}\sum_{i,j=1}^{N_u}cov\left( \widehat{E}_{u,s(i)},\widehat{E}_{u,s(j)}\right)\\
&=& \frac{1}{N_u^2}\sum_{i\neq j=1}^{N_u}cov\left( \widehat{E}_{u,1},\widehat{E}_{u,2}\right)\PP(s(i)\neq s(j))  \\
& & +\frac{1}{N_u^2}\sum_{i\neq j=1}^{N_u} \V\left( \widehat{E}_{u,1}\right)\PP(s(i)= s(j)) +\frac{1}{N_u^2}\sum_{i=1}^{N_u}\V\left( \widehat{E}_{u,s(i)}\right)   \\
& \leq & \frac{1}{N_u^2}\sum_{i\neq j=1}^{N_u}\left|cov\left( \widehat{E}_{u,1},\widehat{E}_{u,2}\right)\right|  \\
& & +\frac{1}{N_u^2}\sum_{i\neq j=1}^{N_u}   \V\left( \widehat{E}_{u,1}\right)\frac{1}{N}  +\frac{1}{N_u^2}\sum_{i=1}^{N_u}\V\left( \widehat{E}_{u,1}\right)   \\
&\leq &\left|cov\left( \widehat{E}_{u,1},\widehat{E}_{u,2}\right)\right|+\V\left( \widehat{E}_{u,1}\right)\left( \frac{1}{N}+\frac{1}{N_u}\right).
\end{eqnarray*}
If $(s(l))_{l\leq N_u}$ is a sample of uniformly distributed variables on $[1:N]$ without replacement, we have
\begin{eqnarray*}
\V(\widehat{E}_{u})&=&\frac{1}{N_u^2}\sum_{i,j=1}^{N_u}cov\left( \widehat{E}_{u,s(i)},\widehat{E}_{u,s(j)}\right)\\
&=& \frac{1}{N_u^2}\sum_{i\neq j=1}^{N_u}cov\left( \widehat{E}_{u,s(i)},\widehat{E}_{u,s(j)}\right)   +\frac{1}{N_u^2}\sum_{i=1}^{N_u}\V\left( \widehat{E}_{u,s(i)}\right)   \\
& = & \frac{N_u-1}{N_u}cov\left( \widehat{E}_{u,1},\widehat{E}_{u,2}\right)+\frac{1}{N_u}\V\left( \widehat{E}_{u,1}\right).
\end{eqnarray*}
In both cases (with or without replacement), thanks to Proposition \ref{prop_covE12}, we have
\begin{equation*}
\PP\left( \left| \widehat{E}_{u}-\E\left( \widehat{E}_{u} \right) \right|>\varepsilon \right)\underset{\substack{N\rightarrow+\infty,\\ N_u\rightarrow+\infty}}{\longrightarrow}0.
\end{equation*}
\end{proof}

Now, to prove Theorem \ref{thrm_consis_MC}, we only have to use Proposition \ref{prop_asymp_sans_bias} (which can be applied thanks to Lemma \ref{lm_continuite_loi_condi}) and Proposition \ref{prop_conv_proba_biaise}.

\subsection{Proof for rate of convergence: Theorems \ref{thrm_vitesse_MC} and \ref{thrm_vitesse_PF}}\label{section_vitesse}

We want to prove Theorems \ref{thrm_vitesse_MC} and \ref{thrm_vitesse_PF} about the rate of convergence of the double Monte-Carlo and Pick-and-Freeze estimators. We have to add some notation. We will write $C_{\sup}$ for a generic non-negative finite constant (depending only on $u$, $f$ and the distribution of $\bX$). The actual value of $C_{\sup}$ is of no interest and can change in the same sequence of equations. Similarly, we will write $C_{\inf}$ a generic strictly positive constant. We will write $C_{\sup}(\varepsilon)$ for a generic non-negative finite constant depending only on $\varepsilon$, $u$, $f$ and the distribution of $\bX$.

Recall that for all $i$, $\cX_i$ is a compact subset of $\R$ and that $f$ is $\mathcal{C}^1$. Moreover recall that $\bX$ has a probability density $f_\bX$ with respect to $\lambda_p$ (the Lebesgue measure on $\R^p$) such that $\lambda_p$-a.e., we have $0<C_{\inf}\leq f_\bX \leq C_{\sup}$, and such that $f_\bX$ is Lipschitz continuous. 

Note that with these assumptions, $\Phi$ is $\mathcal{C}^1$ on the compact set $\cX$ and so Lipschitz continuous. For all $n$, we will write $d$ for the euclidean distance on $\R^n$ (for any value of $n$) and $B(\bx,r)$ for the open ball of radius $r$ and center $\bx$ in $\R^n$. We also let $\mathcal{S}(\bx,r)$ be the sphere of center $\bx$ and radius $r$.\\

Remark that
\begin{eqnarray*}
& & \PP\left(d(\bX_{-u}^{(1)},\bX_{-u}^{(2)})=d(\bX_{-u}^{(1)},\bX_{-u}^{(3)})\right) \\
&=&\int_{\cX_{-u}^2}\PP\left(d(\bx_{-u}^{(1)},\bx_{-u}^{(2)})=d(\bx_{-u}^{(1)},\bX_{-u}^{(3)})\right) d \PP_{\bX_{-u}}^{\otimes 2}(\bx_{-u}^{(1)},\bx_{-u}^{(2)})\\
&\leq & C_{\sup} \int_{\cX_{-u}^2} \lambda_{|-u|}\left( \mathcal{S}(\bx_{-u}^{(1)},d(\bx_{-u}^{(1)},\bx_{-u}^{(2)})) \right)  d \PP_{\bX_{-u}}^{\otimes 2}(\bx_{-u}^{(1)},\bx_{-u}^{(2)})\\
&=&0,
\end{eqnarray*}
because the Lebesgue measure of the sphere $\mathcal{S}(\bx_{-u}^{(1)},d(\bx_{-u}^{(1)},\bx_{-u}^{(2)})) $ is zero. Thus, almost everywhere, for all $l$ and all $i \neq j$,
$$
d\left(\bX_{-u}^{(l)},\bX_{-u}^{(i)}\right)\neq d\left(\bX_{-u}^{(l)},\bX_{-u}^{(j)}\right).
$$
Thus, the indices of the nearest neighbours $(k_N(l,i))_{l,i}$ are constant random variables conditionally to $(\bX_{-u}^{(n)})_n$ or to $(\bX_{-u}^{(n)})_{n\leq N}$. In particular, for all $N$ and $l$, $k_N(l,1)=l$. Thanks to Doob-Dynkin lemma, we can write, abusing notation, $k_N(l,i)(\omega)=k_N(l,i)[(\bX_{-u}^{(n)}(\omega))_n]=k_N(l,i)[(\bX_{-u}^{(n)}(\omega))_{n\leq N}]$. To simplify notation, let us write $k_N(i):=k_N(1,i)$ (the index of one $i$-th neighbour of $\bX_{-u}^{(1)}$) and $k'_N(i):=k_N(2,i)$ (the index of one $i$-th neighbour of $\bX_{-u}^{(2)}$).

\begin{rmk}
We can prove the rate of convergence in a more general framework than the Euclidean space with the Lebesgue measure. It suffices to have a compact set $\cX$ with a dominating finite measure $\mu=\bigotimes \mu_i$ such that for $\mu_i$-almost all $\bx_i\in \cX_i$ and for all $\delta>0$, 
$$
C_{\inf}\delta \leq  \mu_i(B(\bx_i,\delta))= \mu_i(\overline{B}(\bx_i,\delta))\leq C_{\sup} \delta. 
$$
\end{rmk}

We prove Theorems \ref{thrm_vitesse_MC} and \ref{thrm_vitesse_PF} step by step.

\begin{lm}\label{lm_prod}
Assume that $(a_i)_i$ and $(b_i)_i$ are sequences such that for all $i$, $|a_i|\leq M$, $|b_i|\leq M$ and $|a_i-b_i| \leq \varepsilon$. Then, for all $N\in \N^*$
$$
\left|\prod_{i=1}^N a_i-\prod_{i=1}^{N} b_i \right|\leq NM^{N-1}\varepsilon.
$$
\end{lm}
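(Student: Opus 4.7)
The plan is to prove the inequality by a telescoping decomposition, which is cleaner and more transparent than induction for this kind of statement. If $M = 0$ the result is trivial, so I will assume $M > 0$.

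First I would write the standard telescoping identity
\[
\prod_{i=1}^N a_i - \prod_{i=1}^N b_i \;=\; \sum_{k=1}^N \left(\prod_{i=1}^{k-1} a_i\right)(a_k - b_k)\left(\prod_{i=k+1}^N b_i\right),
\]
with the convention that empty products equal $1$. This is verified by checking that the right-hand side telescopes: the $k$-th summand can be rewritten as $\prod_{i=1}^{k} a_i \prod_{i=k+1}^N b_i - \prod_{i=1}^{k-1}a_i \prod_{i=k}^N b_i$, and summing over $k$ from $1$ to $N$ collapses to $\prod_{i=1}^N a_i - \prod_{i=1}^N b_i$.

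Then I would take absolute values, use $|a_i|,|b_i|\le M$ on the two product factors and $|a_k - b_k|\le \varepsilon$ on the middle factor to get
\[
\left|\prod_{i=1}^N a_i - \prod_{i=1}^N b_i\right| \;\le\; \sum_{k=1}^N M^{k-1}\,\varepsilon\,M^{N-k} \;=\; N\,M^{N-1}\,\varepsilon,
\]
which is the desired bound. (Alternatively, one can run a one-line induction using $\prod_{i=1}^{N+1} a_i - \prod_{i=1}^{N+1} b_i = a_{N+1}(\prod_{i=1}^N a_i - \prod_{i=1}^N b_i) + (a_{N+1}-b_{N+1})\prod_{i=1}^N b_i$.)

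There is essentially no obstacle here; the only thing to be slightly careful about is the convention for empty products in the $k=1$ and $k=N$ terms of the telescoping sum, and the trivial case $M=0$. The lemma is a standard stability estimate for products that will presumably be applied later to control differences of products of nearest-neighbour evaluations of $f$ (which are uniformly bounded by $\|f\|_\infty$) against their limiting counterparts.
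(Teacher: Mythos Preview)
Your proof is correct. The paper's own proof is simply the two-word line ``By induction,'' and your telescoping argument is precisely the unfolded form of that induction (as you yourself note in your parenthetical alternative), so the two approaches are essentially the same.
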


\begin{proof}
By induction.
\end{proof}

\begin{lm}\label{lm_condi1}
If for all $i\leq N$, $d(\bx_{-u}^{(i)},\by_{-u}^{(i)})<\varepsilon$, then, for all $(\ba_{-u}^{(i)})_{i\leq N_I} \in \cX_{-u}^{N_I}$, 
\begin{eqnarray*}
&\left|\E\left[\left. \Phi\left((\ba_{-u}^{(i)})_{i\leq N_I},(\bX_u^{(i)})_{i\leq N_I}\right) \right|(\bX_{-u}^{(i)})_{i\leq N_I}=(\bx_{-u}^{(i)})_{i\leq N_I}\right]\right.&\\
-& \left.  \E\left[\left. \Phi\left((\ba_{-u}^{(i)})_{i\leq N_I},(\bX_u^{(i)})_{i\leq N_I}\right) \right|(\bX_{-u}^{(i)})_{i\leq N_I}=(\by_{-u}^{(i)})_{i\leq N_I}\right]\right|&\leq C_{\sup} \varepsilon.
\end{eqnarray*}
\end{lm}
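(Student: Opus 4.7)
My plan is to express both conditional expectations as integrals against the product of conditional densities, and then to control their difference via a Lipschitz bound on the conditional density of $\bX_u$ given $\bX_{-u}$.

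First, because $(\bX^{(i)})_{i \leq N_I}$ is an i.i.d.\ sample, conditionally on $(\bX_{-u}^{(i)})_{i \leq N_I} = (\bx_{-u}^{(i)})_{i \leq N_I}$ the variables $(\bX_u^{(i)})_{i \leq N_I}$ are independent, each with density $f_{\bX_u\mid\bX_{-u}=\bx_{-u}^{(i)}}$ with respect to $\lambda_{|u|}$. The first conditional expectation therefore equals
\begin{equation*}
\int_{\cX_u^{N_I}} \Phi\bigl((\ba_{-u}^{(i)})_i,(\mathbf{v}_u^{(i)})_i\bigr) \prod_{i=1}^{N_I} f_{\bX_u\mid\bX_{-u}=\bx_{-u}^{(i)}}(\mathbf{v}_u^{(i)})\, d\mathbf{v}_u^{(1)} \cdots d\mathbf{v}_u^{(N_I)},
\end{equation*}
and the second is the same integral with $\bx_{-u}^{(i)}$ replaced by $\by_{-u}^{(i)}$.

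Next, I will establish the pointwise estimate $\bigl| f_{\bX_u\mid\bX_{-u}=\bx_{-u}^{(i)}}(\mathbf{v}_u) - f_{\bX_u\mid\bX_{-u}=\by_{-u}^{(i)}}(\mathbf{v}_u) \bigr| \leq C_{\sup}\, \varepsilon$, uniformly in $\mathbf{v}_u \in \cX_u$. Writing $f_{\bX_u\mid\bX_{-u}=\bx_{-u}}(\mathbf{v}_u) = f_\bX(\mathbf{v}_u,\bx_{-u})/f_{\bX_{-u}}(\bx_{-u})$, the numerator is Lipschitz in $\bx_{-u}$ by Assumption \ref{assum_rate} and bounded above by $C_{\sup}$, while the marginal $f_{\bX_{-u}}$ is Lipschitz (obtained by integrating the Lipschitz bound for $f_\bX$ over the compact set $\cX_u$) and bounded below by $C_{\inf}\lambda_{|u|}(\cX_u) > 0$. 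The elementary identity $|a/c - b/d| \leq |a|\,|d-c|/(cd) + |a-b|/d$ then yields the claim.

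Finally, applying Lemma \ref{lm_prod} with $a_i = f_{\bX_u\mid\bX_{-u}=\bx_{-u}^{(i)}}(\mathbf{v}_u^{(i)})$ and $b_i = f_{\bX_u\mid\bX_{-u}=\by_{-u}^{(i)}}(\mathbf{v}_u^{(i)})$, both uniformly bounded by $M := C_{\sup}/(C_{\inf}\lambda_{|u|}(\cX_u))$, gives $\bigl|\prod_i a_i - \prod_i b_i\bigr| \leq N_I M^{N_I-1} C_{\sup}\, \varepsilon$ pointwise on $\cX_u^{N_I}$. Since $f$ is $\mathcal{C}^1$ on the compact set $\cX$ it is bounded, so $\Phi$ is bounded as well, and $\lambda_{|u|}(\cX_u)^{N_I} < \infty$; integrating the pointwise bound against $\Phi$ over $\cX_u^{N_I}$ concludes the proof. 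The argument is essentially bookkeeping of constants with no real obstacle; the decisive input is the lower bound $C_{\inf} > 0$ from Assumption \ref{assum_rate}, which keeps the denominator of the conditional density bounded away from zero and allows the Lipschitz bound on $f_\bX$ to transfer to the conditional density.
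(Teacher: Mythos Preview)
Your proof is correct and follows essentially the same route as the paper: write each conditional expectation as an integral against the product of conditional densities, establish a uniform Lipschitz bound on $\bx_{-u}\mapsto f_{\bX_u\mid\bX_{-u}=\bx_{-u}}(\cdot)$ from Assumption~\ref{assum_rate}, invoke Lemma~\ref{lm_prod} to control the difference of the $N_I$-fold products, and integrate using the boundedness of $\Phi$ and the finiteness of $\lambda_{|u|}(\cX_u)$. The only cosmetic difference is that you track the constants a bit more explicitly than the paper, which absorbs everything into a generic $C_{\sup}$.
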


\begin{proof}

\begin{eqnarray*}
& &\left|\E\left[\left. \Phi\left((\ba_{-u}^{(i)})_{i\leq N_I},(\bX_u^{(i)})_{i\leq N_I}\right) \right|(\bX_{-u}^{(i)})_{i\leq N_I}=(\bx_{-u}^{(i)})_{i\leq N_I}\right]\right.\\
& & - \left.  \E\left[\left. \Phi\left((\ba_{-u}^{(i)})_{i\leq N_I},(\bX_u^{(i)})_{i\leq N_I}\right) \right|(\bX_{-u}^{(i)})_{i\leq N_I}=(\by_{-u}^{(i)})_{i\leq N_I}\right]\right|\\
&=&\left|\int_{\cX_u^{N_I}} \Phi((\ba_{-u}^{(i)})_{i\leq N_I},(\bx_u^{(i)})_{i\leq N_I})\left(f_{(\bX_u^{(i)})_{i\leq N_I}|(\bX_{-u}^{(i)})_{i\leq N_I}=(\bx_{-u}^{(i)})_{i\leq N_I}}((\bx_u^{(i)})_{i\leq N_I})\right. \right.\\
 & & \left. \left. -f_{(\bX_u^{(i)})_{i\leq N_I}|(\bX_{-u}^{(i)})_{i\leq N_I}=(\by_{-u}^{(i)})_{i\leq N_I}}((\bx_u^{(i)})_{i\leq k}) \right) d ((\bx_u^{(i)})_{i\leq N_I})\right|\\
 & \leq &  C_{\sup}\int_{\cX_u^{N_I}}\left|\prod_{i=1}^{N_I}f_{\bX_u|\bX_{-u}=\bx_{-u}^{(i)}}(\bx_u^{(i)}) -  \prod_{i=1}^{N_I}f_{\bX_u|\bX_{-u}=\by_{-u}^{(i)}}(\bx_u^{(i)}) \right|  d((\bx_u^{(i)})_{i\leq N_I}).
\end{eqnarray*}
We know that, 
\begin{eqnarray*}
& & \left| f_{\bX_u|\bX_{-u}=\bx_{-u}}(\bx_u)-f_{\bX_u|\bX_{-u}=\by_{-u}}(\bx_u) \right| \\
& \leq &   \left| \frac{f_\bX(\bx_u,\bx_{-u})}{\int_{\cX_{u}} f_\bX(\bx_u',\bx_{-u})d(\bx_{u}')}-\frac{f_\bX(\bx_u,\by_{-u})}{\int_{\cX_{u}} f_\bX(\bx_u',\by_{-u})d(\bx_{u}')} \right|\\
& \leq & \frac{1}{\int_{\cX_{u}} f_\bX(\bx_u',\bx_{-u})d(\bx_{u}')}\left| f_\bX(\bx_u,\bx_{-u})-f_\bX(\bx_u,\by_{-u}) \right|\\
& & + f_\bX(\bx_u,\by_{-u})\left| \frac{1}{\int_{\cX_{u}} f_\bX(\bx_u',\bx_{-u})d(\bx_{u}')}-\frac{1}{\int_{\cX_{u}} f_\bX(\bx_u',\by_{-u})d(\bx_{u}')} \right|\\
& \leq & C_{\sup}\left| f_\bX(\bx_u,\bx_{-u})-f_\bX(\bx_u,\by_{-u}) \right|  + C_{\sup}\left| f_\bX(\bx_u,\bx_{-u})-f_\bX(\bx_u,\by_{-u}) \right|\\
& \leq & C_{\sup} d(\bx_{-u},\by_{-u}).
\end{eqnarray*}
Thus, for all $i\in [1:N_i]$ and for all $\bx_u^{(i)}$,
$$
\left| f_{\bX_u|\bX_{-u}=\bx_{-u}^{(i)}}(\bx_u^{(i)})-f_{\bX_u|\bX_{-u}=\by_{-u}^{(i)}}(\bx_u^{(i)}) \right| \leq C_{\sup}\varepsilon.
$$
Thus, using Lemma \ref{lm_prod},
\begin{align*}
&\left|\E\left[\left. \Phi\left((\ba_{-u}^{(i)})_{i\leq N_I},(\bX_u^{(i)})_{i\leq N_I}\right) \right|(\bX_{-u}^{(i)})_{i\leq N_I}=(\bx_{-u}^{(i)})_{i\leq N_I}\right]\right.&\\
-& \left.  \E\left[\left. \Phi\left((\ba_{-u}^{(i)})_{i\leq N_I},(\bX_u^{(i)})_{i\leq N_I}\right) \right|(\bX_{-u}^{(i)})_{i\leq N_I}=(\by_{-u}^{(i)})_{i\leq N_I}\right]\right|\leq C_{\sup} \varepsilon.
\end{align*}
\end{proof}

\begin{lm}\label{lm_condi2}
If for all $i$, $d(\bx_{-u}^{(i)},\by_{-u}^{(i)})<\varepsilon$, then
\begin{eqnarray*}
&\left|\E\left[\left. \Phi\left((\bx_{-u}^{(i)})_{i\leq N_I},(\bX_u^{(i)})_{i\leq N_I}\right) \right|(\bX_{-u}^{(i)})_{i\leq N_I}=(\bx_{-u}^{(i)})_{i\leq N_I}\right]\right.&\\
-& \left.  \E\left[\left. \Phi\left((\by_{-u}^{(i)})_{i\leq N_I},(\bX_u^{(i)})_{i\leq N_I}\right) \right|(\bX_{-u}^{(i)})_{i\leq N_I}=(\by_{-u}^{(i)})_{i\leq N_I}\right]\right|&\leq C_{\sup} \varepsilon.
\end{eqnarray*}
\end{lm}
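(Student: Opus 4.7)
The plan is to reduce Lemma \ref{lm_condi2} to Lemma \ref{lm_condi1} by a triangle inequality that separates the two sources of discrepancy: (i) a change in the deterministic argument $(\bx_{-u}^{(i)})_i \mapsto (\by_{-u}^{(i)})_i$ inside $\Phi$, and (ii) a change in the conditioning event. Concretely, I would insert the intermediate quantity
$$
\E\!\left[\Phi\!\left((\by_{-u}^{(i)})_{i\leq N_I},(\bX_u^{(i)})_{i\leq N_I}\right)\,\Big|\,(\bX_{-u}^{(i)})_{i\leq N_I}=(\bx_{-u}^{(i)})_{i\leq N_I}\right]
$$
and apply the triangle inequality, obtaining two terms (A) and (B) to control.

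For term (A), both conditional expectations are taken under the same conditioning $(\bX_{-u}^{(i)})_i = (\bx_{-u}^{(i)})_i$, but the first coordinate block of $\Phi$ is either $(\bx_{-u}^{(i)})_i$ or $(\by_{-u}^{(i)})_i$. Under Assumption \ref{assum_rate}, $f$ is $\mathcal{C}^1$ on the compact set $\cX$, hence Lipschitz; since $\Phi_{MC}$ (and analogously $\Phi_{PF}$) is a polynomial expression in values of $f$ evaluated at arguments that are themselves Lipschitz functions of $(\bx^{(i)})_i$, the map $\Phi$ is Lipschitz on $\cX^{N_I}$. Passing the $\Phi$-Lipschitz bound under the conditional expectation and using the hypothesis $d(\bx_{-u}^{(i)},\by_{-u}^{(i)})<\varepsilon$ for each $i$ yields a bound of the form $C_{\sup}\,\varepsilon$ for (A).

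For term (B), the integrand $\Phi\left((\by_{-u}^{(i)})_i,(\bX_u^{(i)})_i\right)$ only depends on $(\by_{-u}^{(i)})_i$ (which plays the role of the fixed $(\ba_{-u}^{(i)})_i$) and on the random block $(\bX_u^{(i)})_i$, while the conditioning changes from $(\bx_{-u}^{(i)})_i$ to $(\by_{-u}^{(i)})_i$. This is exactly the situation covered by Lemma \ref{lm_condi1}, which therefore gives the bound $C_{\sup}\,\varepsilon$ on (B). Summing the two bounds concludes the proof.

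The only potentially tricky step is verifying Lipschitz continuity of $\Phi$ on $\cX^{N_I}$ cleanly; however this is routine from Assumption \ref{assum_rate} (compactness of $\cX$ plus $\mathcal{C}^1$ regularity of $f$), so I would simply invoke it in a short sentence rather than estimate the Lipschitz constant explicitly. The key insight is that Lemma \ref{lm_condi1} already does the work of transferring between conditional densities, so Lemma \ref{lm_condi2} is essentially Lemma \ref{lm_condi1} plus one application of Lipschitz continuity of $\Phi$.
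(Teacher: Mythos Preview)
Your proposal is correct and follows essentially the same approach as the paper: a triangle inequality with an intermediate term, bounding one piece by the Lipschitz continuity of $\Phi$ (from Assumption \ref{assum_rate}) and the other by Lemma \ref{lm_condi1}. The only cosmetic difference is the choice of intermediate term---the paper inserts $\E\big[\Phi((\bx_{-u}^{(i)})_i,(\bX_u^{(i)})_i)\,\big|\,(\bX_{-u}^{(i)})_i=(\by_{-u}^{(i)})_i\big]$ (change conditioning first, then the deterministic argument), whereas you change the deterministic argument first; both orderings work equally well.
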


\begin{proof}
\begin{eqnarray*}
& &\left|\E\left[\left. \Phi\left((\bx_{-u}^{(i)})_{i\leq N_I},(\bX_u^{(i)})_{i\leq N_I}\right) \right|(\bX_{-u}^{(i)})_{i\leq N_I}=(\bx_{-u}^{(i)})_{i\leq N_I}\right]\right.\\
&& - \left.  \E\left[\left. \Phi\left((\by_{-u}^{(i)})_{i\leq N_I},(\bX_u^{(i)})_{i\leq N_I}\right) \right|(\bX_{-u}^{(i)})_{i\leq N_I}=(\by_{-u}^{(i)})_{i\leq N_I}\right]\right|\\
 & \leq &\left|\E\left[\left. \Phi\left((\bx_{-u}^{(i)})_{i\leq N_I},(\bX_u^{(i)})_{i\leq N_I}\right) \right|(\bX_{-u}^{(i)})_{i\leq N_I}=(\bx_{-u}^{(i)})_{i\leq N_I}\right]\right.\\
&& - \left.  \E\left[\left. \Phi\left((\bx_{-u}^{(i)})_{i\leq N_I},(\bX_u^{(i)})_{i\leq N_I}\right) \right|(\bX_{-u}^{(i)})_{i\leq N_I}=(\by_{-u}^{(i)})_{i\leq N_I}\right]\right|\\
 & & + \left| \E\left[\Phi\left((\bx_{-u}^{(i)})_{i\leq N_I},(\bX_u^{(i)})_{i\leq N_I}\right)-\Phi\left((\by_{-u}^{(i)})_{i\leq N_I},(\bX_u^{(i)})_{i\leq N_I}\right)|(\bX_{-u}^{(i)})_{i\leq N_I}=(\by_{-u}^{(i)})_{i\leq N_I}\right]\right|\\
 & \leq & C_{\sup}\varepsilon +C_{\sup}\varepsilon,
\end{eqnarray*}
using Lemma \ref{lm_condi1} and using that $\Phi$ is Lipschitz continuous on $\cX$.
\end{proof}

\begin{lm}\label{lm_proba_voisin_loin}
There exists $C_{\sup}<+\infty$ such that for all $a>0$,
\begin{equation}
\PP\left(\left. d\left( \bX_{-u}^{(1)},\bX_{-u}^{(k_N(N_I))}\right) \geq a\right| \bX_{-u}^{(1)}\right) \leq  C_{\sup}N^{N_I}(1-C_{\inf} a^{|-u|})^{N-N_I}.
\end{equation}
\end{lm}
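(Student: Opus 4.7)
The plan is to reduce the statement to a standard binomial tail bound. First I would rewrite the event geometrically: since $\bX_{-u}^{(1)}$ is itself among the points $\bX_{-u}^{(1)},\ldots,\bX_{-u}^{(N)}$ (so $k_N(1)=1$), the event $\{d(\bX_{-u}^{(1)},\bX_{-u}^{(k_N(N_I))})\geq a\}$ holds if and only if strictly fewer than $N_I$ of these $N$ observations lie in the open ball $B(\bX_{-u}^{(1)},a)$, equivalently at most $N_I-2$ of the remaining observations $\bX_{-u}^{(2)},\ldots,\bX_{-u}^{(N)}$ do. Conditionally on $\bX_{-u}^{(1)}=\bx_{-u}$, these $N-1$ remaining points are i.i.d.\ with the law of $\bX_{-u}$, so the number that fall in $B(\bx_{-u},a)$ is $\mathrm{Binomial}(N-1,p_a(\bx_{-u}))$ with $p_a(\bx_{-u}):=\PP(\bX_{-u}\in B(\bx_{-u},a))$.

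Next I would establish a uniform lower bound $p_a(\bx_{-u})\geq C_{\inf}a^{|-u|}$ for all $\bx_{-u}\in\cX_{-u}$, possibly restricting to $a$ such that $C_{\inf}a^{|-u|}\leq 1$ (for larger $a$, $\cX_{-u}$ has bounded diameter and the bound of the lemma is made trivial by decreasing $C_{\inf}$ further). Indeed, under Assumption~\ref{assum_rate}, integrating $f_{\bX}\geq C_{\inf}>0$ in the $u$-directions yields $f_{\bX_{-u}}\geq C_{\inf}\lambda_{|u|}(\cX_u)$ on $\cX_{-u}$, and hence
\begin{equation*}
p_a(\bx_{-u})\;\geq\;C_{\inf}\lambda_{|u|}(\cX_u)\,\lambda_{|-u|}\!\left(B(\bx_{-u},a)\cap\cX_{-u}\right).
\end{equation*}
The ball-volume factor is bounded below by a constant multiple of $a^{|-u|}$ uniformly in $\bx_{-u}$ by the regularity of the compact set $\cX_{-u}$ (the worst case being corners of the product structure).

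Finally, I would bound the binomial tail using $\binom{N-1}{k}\leq N^{N_I-2}$, $p^{k}\leq 1$, and $(1-p)^{N-1-k}\leq(1-p)^{N-N_I}$ for $0\leq k\leq N_I-2$, giving
\begin{equation*}
\PP\!\left(\mathrm{Bin}(N-1,p)\leq N_I-2\right)\;\leq\;(N_I-1)\,N^{N_I-2}\,(1-p)^{N-N_I}.
\end{equation*}
Plugging in $p=p_a(\bx_{-u})\geq C_{\inf}a^{|-u|}$ and using monotonicity of $t\mapsto(1-t)^{N-N_I}$ on $[0,1]$ (valid since the right-hand side of the claimed bound is trivial when $C_{\inf}a^{|-u|}>1$, after absorbing constants), then taking expectation over $\bX_{-u}^{(1)}$ (or keeping the conditioning, since the bound is uniform in $\bx_{-u}$), yields the claim with $C_{\sup}=N_I-1$ and the crude factor $N^{N_I}\geq N^{N_I-2}$. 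The main obstacle is the uniform geometric estimate $\lambda_{|-u|}(B(\bx_{-u},a)\cap\cX_{-u})\geq c\,a^{|-u|}$, which is straightforward for the product-of-intervals shape implicitly used in the paper but would require an interior-cone-type condition in more general geometries; the binomial calculation itself is routine.
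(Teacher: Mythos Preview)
Your proposal is correct and follows essentially the same route as the paper: rewrite the event as a binomial tail for the count of observations falling in $B(\bX_{-u}^{(1)},a)$, bound that tail crudely by $C_{\sup}N^{N_I}(1-p)^{N-N_I}$, and lower-bound $p$ by $C_{\inf}a^{|-u|}$ via the density lower bound from Assumption~\ref{assum_rate}. You are in fact slightly more careful than the paper on two points: you correctly identify the threshold as $N_I-2$ among the remaining $N-1$ points (the paper uses $N_I-1$, which is only a weaker inclusion and still gives the bound), and you flag the geometric issue that one really needs $\lambda_{|-u|}(B(\bx_{-u},a)\cap\cX_{-u})\geq c\,a^{|-u|}$ rather than the full ball volume, together with the trivial case $C_{\inf}a^{|-u|}>1$.
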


\begin{proof}
Let $K(a):=\# \{n\in [2:N],\;d(\bX_{-u}^{(1)},\bX_{-u}^{(n)})<a\}$. Conditionally to $\bX_{-u}^{(1)}$, $K(a)\sim \mathcal{B}(N-1,p(a,\bX_{-u}^{(1)}))$, writing $p(a,\bX_{-u}^{(1)}):=\PP(d(\bX_{-u}^{(1)},\bX_{-u}^{(2)})<a|\bX_{-u}^{(1)})$. Thus,
\begin{eqnarray*}
&&\PP\left(\left. d\left( \bX_{-u}^{(1)},\bX_{-u}^{(k_N(N_I))}\right) \geq a\right| \bX_{-u}^{(1)}\right)\\
&=& \PP\left(\left.K(a)\leq N_I-1\right| \bX_{-u}^{(1)} \right)\\
&=&\sum_{k=0}^{N_I-1}\begin{pmatrix}
N-1\\ k
\end{pmatrix}p(a,\bX_{-u}^{(1)})^k(1-p(a,\bX_{-u}^{(1)}))^{N-1-k}\\
& \leq & N_I\begin{pmatrix}
N-1\\ N_I-1
\end{pmatrix}(1-p(a,\bX_{-u}^{(1)}))^{N-N_I}\\
& \leq & C_{\sup}N^{N_I}(1-p(a,\bX_{-u}^{(1)}))^{N-N_I}.
\end{eqnarray*}
We know that
\begin{eqnarray*}
p(a,\bX_{-u}^{(1)})&=&\int_{B(\bX_{-u}^{(1)},a)} f_{\bX_{-u}}(\bx_{-u})d\bx_{-u}\\
& \geq & C_{\inf} \lambda_{|-u|}\left(B(\bX_{-u}^{(1)},a)\right)\\
& \geq & C_{\inf} a^{|-u|}.
\end{eqnarray*}
Thus
\begin{equation}
\PP\left(\left. d\left( \bX_{-u}^{(1)},\bX_{-u}^{(k_N(N_I))}\right) \geq a\right| \bX_{-u}^{(1)}\right) \leq  C_{\sup}N^{N_I}(1-C_{\inf} a^{|-u|})^{N-N_I}.
\end{equation}
\end{proof}

\begin{rmk}
For the estimators $\widehat{V}_{u,PF}$, we choose only one nearest neighbour different from $\bX_{u}^{(1)}$ in $\widehat{V}_{u,1,PF}$, which is $\bX_u^{(k_N(2))}$. Thus, in the previous computation, we do not have the  $N^{N_I}$. Remark that this is also true for $\widehat{E}_{u,MC}$ taking $N_I=2$.
\end{rmk}

\begin{lm}\label{lm_Ceps}
For all $\varepsilon>0$, there exists $C_{\sup}(\varepsilon)$ such that
\begin{equation}
\E\left( d\left( \bX_{-u}^{(1)},\bX_{-u}^{(k_N(N_I))}\right) \right)\leq \frac{C_{\sup}(\varepsilon)}{N^{\frac{1}{p-|u|}-\varepsilon}},
\end{equation}
and for all $\bx_{-u}^{(1)}$,
\begin{equation}
\E\left( \left. d\left( \bX_{-u}^{(1)},\bX_{-u}^{(k_N(N_I))}\right) \right| \bX_{-u}^{(1)}=\bx_{-u}^{(1)} \right)\leq \frac{C_{\sup}(\varepsilon)}{N^{\frac{1}{p-|u|}-\varepsilon}}.
\end{equation}
\end{lm}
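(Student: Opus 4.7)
The plan is to deduce this expectation bound from the tail bound in Lemma \ref{lm_proba_voisin_loin} via the layer-cake identity, and to show that the conditional bound already depends on $\bx_{-u}^{(1)}$ only through a universal constant, so the unconditional statement follows by integration.

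Writing $Z_N := d(\bX_{-u}^{(1)},\bX_{-u}^{(k_N(N_I))})$ and noting that $\cX_{-u}$ is compact in $\R^{p-|u|}$ with diameter $D<+\infty$, the first step is the identity
\[
\E(Z_N \mid \bX_{-u}^{(1)}=\bx_{-u}^{(1)}) = \int_0^{D} \PP\!\left(Z_N \geq a \mid \bX_{-u}^{(1)}=\bx_{-u}^{(1)}\right) da.
\]
Then I would plug in the bound from Lemma \ref{lm_proba_voisin_loin} and use the elementary inequality $(1-t)^m \leq e^{-mt}$ valid for $t\in[0,1]$, giving
\[
\PP\!\left(Z_N \geq a \mid \bX_{-u}^{(1)}=\bx_{-u}^{(1)}\right) \leq C_{\sup} N^{N_I}\exp\!\left(-C_{\inf}(N-N_I)\,a^{p-|u|}\right),
\]
uniformly in $\bx_{-u}^{(1)}$.

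The key step is to split the integral at the threshold $a_N := N^{-1/(p-|u|)+\varepsilon}$. For $a\leq a_N$ I simply bound the probability by $1$, contributing $a_N = N^{-1/(p-|u|)+\varepsilon}$. For $a\geq a_N$ I note that $(N-N_I)a^{p-|u|} \geq \tfrac{1}{2} N^{\varepsilon(p-|u|)}$ for $N$ large, so
\[
\int_{a_N}^{D} C_{\sup} N^{N_I}\exp\!\left(-C_{\inf}(N-N_I)a^{p-|u|}\right) da \leq D\,C_{\sup} N^{N_I}\exp\!\left(-\tfrac{1}{2}C_{\inf} N^{\varepsilon(p-|u|)}\right),
\]
which decays faster than any polynomial in $N$ and is therefore $o(N^{-1/(p-|u|)+\varepsilon})$. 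Combining the two pieces, and absorbing the small-$N$ regime into the constant, yields the conditional bound with some $C_{\sup}(\varepsilon)$ that does not depend on $\bx_{-u}^{(1)}$. Integrating against $\PP_{\bX_{-u}}$ gives the unconditional bound with the same constant.

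The only non-routine aspect is choosing the threshold $a_N$ so that both pieces of the split are of the desired order; the factor $N^{N_I}$ in front of the tail is harmless because it is dominated by the stretched-exponential decay. No new assumptions beyond Assumption \ref{assum_rate} (already used in Lemma \ref{lm_proba_voisin_loin}, ensuring $f_\bX \geq C_{\inf}>0$ and compactness of $\cX$) are needed.
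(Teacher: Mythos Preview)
Your proposal is correct and follows essentially the same route as the paper: both use the layer-cake representation together with Lemma \ref{lm_proba_voisin_loin} and the inequality $(1-t)^m\le e^{-mt}$, then exploit that the polynomial prefactor $N^{N_I}$ is killed by the stretched-exponential tail. The only cosmetic difference is that the paper first rescales the random variable by $(N-N_I)^{1/(p-|u|)-\varepsilon}$ and shows the resulting expectation is bounded, whereas you keep $Z_N$ unscaled and split the integral at the threshold $a_N=N^{-1/(p-|u|)+\varepsilon}$; these are two equivalent ways of organizing the same computation.
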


\begin{proof}
Using Lemma \ref{lm_proba_voisin_loin}, we have
\begin{eqnarray*}
& & \E\left( \left.(N-N_I)^{\frac{1}{|-u|}-\varepsilon} d\left( \bX_{-u}^{(1)},\bX_{-u}^{(k_N(N_I))}\right) \right| \bX_{-u}^{(1)}\right) \\
&=& \int_0^{+\infty} \PP \left( \left. (N-N_I)^{\frac{1}{|-u|}-\varepsilon} d\left( \bX_{-u}^{(1)},\bX_{-u}^{(k_N(N_I))}\right) >t \right| \bX_{-u}^{(1)} \right) dt\\
&\leq & 1+\int_1^{+\infty} \PP\left( \left. d\left( \bX_{-u}^{(1)},\bX_{-u}^{(k_N(N_I))}\right)>t (N-N_I)^{-\frac{1}{|-u|}+\varepsilon}\right| \bX_{-u}^{(1)}\right) dt\\
&=&1+ \frac{1}{|-u|} \int_1^{+\infty}s^{\frac{1}{|-u|}-1} \PP\left( \left. d\left( \bX_{-u}^{(1)},\bX_{-u}^{(k_N(N_I))}\right)>s^\frac{1}{|-u|} (N-N_I)^{-\frac{1}{|-u|}+\varepsilon}\right| \bX_{-u}^{(1)}\right) ds\\
& \leq & 1+  \frac{1}{|-u|} \int_1^{+\infty} C_{\sup}N^{N_I}(1-C_{\inf} s (N-N_I)^{|-u|\varepsilon-1})^{N-N_I} ds,
\end{eqnarray*}
and
\begin{eqnarray*}
(1-C_{\inf} s (N-N_I)^{|-u|\varepsilon-1})^{N-N_I}&=&\exp\left[(N-N_I)\ln \left(1-C_{\inf} s (N-N_I)^{|-u|\varepsilon-1}\right)\right]\\
& \leq & \exp\left[(N-N_I)\left(-C_{\inf} s (N-N_I)^{|-u|\varepsilon-1}\right)\right]\\
&=&  \exp(-C_{\inf} s(N-N_I)^{|-u|\varepsilon}).
\end{eqnarray*}
Thus,
\begin{eqnarray*}
& & \E\left( \left.(N-N_I)^{\frac{1}{|-u|}-\varepsilon} d\left( \bX_{-u}^{(1)},\bX_{-u}^{(k_N(N_I))}\right) \right| \bX_{-u}^{(1)}\right) \\
& \leq & 1+C_{\sup}\int_1^{+\infty}N^{N_I}  \exp(-C_{\inf} s(N-N_I)^{|-u|\varepsilon}) ds\\
& \leq & 1+C_{\sup}\left[ N^{N_I} \exp(-C_{\inf} \frac{1}{2} (N-N_I)^{|-u|\varepsilon}) \right] \int_1^{+\infty}\exp(-C_{\inf} \frac{s}{2}(N-N_I)^{|-u|\varepsilon}) ds\\
& \leq & 1+C_{\sup}(\varepsilon).
\end{eqnarray*}
Indeed, the values $ N^{N_I} \exp(-C_{\inf} \frac{1}{2} (N-N_I)^{|-u|\varepsilon}$ and $\int_1^{+\infty}\exp(-C_{\inf} \frac{s}{2}(N-N_I)^{|-u|\varepsilon}) ds$ go to $0$ when $N$ do $+\infty$. Thus
\begin{equation*}
\E\left( \left. d\left( \bX_{-u}^{(1)},\bX_{-u}^{(k_N(N_I))}\right)\right| \bX_{-u}^{(1)} \right)\leq \frac{1+C_{\sup}(\varepsilon)}{(N-N_I)^{\frac{1}{p-|u|}-\varepsilon}}\leq \frac{C_{\sup}(\varepsilon)}{N^{\frac{1}{p-|u|}-\varepsilon}}.
\end{equation*}
That concludes the proof of Lemma \ref{lm_Ceps}.
\end{proof}

\begin{rmk}\label{rmq}
For the estimators $\widehat{V}_{u,PF}$, we do not have the  $N^{N_I}$. Thus, we can choose $\varepsilon=0$ up to Proposition \ref{prop_vitesse_esperance}.
\end{rmk}

\begin{prop}\label{prop_vitesse_esperance}
For all $\varepsilon >0$, there exists $C_{\sup}(\varepsilon)$ such that
\begin{equation}
\left| \E\left( \widehat{E}_{u}\right)-E_u \right|\leq \frac{C_{\sup}(\varepsilon)}{N^{\frac{1}{p-|u|}-\varepsilon}}
\end{equation}
and for almost all $\bx_{-u}^{(1)}$,
\begin{equation}
\left| \E\left( \widehat{E}_{u,1}|\bX_{-u}^{(1)}=\bx_{-u}^{(1)}\right)-\V(Y|\bX_{-u}=\bx_{-u}^{(1)}) \right|\leq \frac{C_{\sup}(\varepsilon)}{N^{\frac{1}{p-|u|}-\varepsilon}}.
\end{equation}
\end{prop}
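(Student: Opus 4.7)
Since $\widehat{E}_u$ is the mean of the identically distributed $\widehat{E}_{u,s(l)}$, we have $\E(\widehat{E}_u)=\E(\widehat{E}_{u,1})$, so for the first assertion it suffices to control the bias of the single term $\widehat{E}_{u,1}$. Likewise, the second (conditional) assertion reduces to estimating $\E(\widehat{E}_{u,1}\mid\bX_{-u}^{(1)}=\bx_{-u}^{(1)})-\V(Y\mid\bX_{-u}=\bx_{-u}^{(1)})$, by the same exchangeability argument applied conditionally.

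The main step is to condition on the whole $-u$-subsample $(\bX_{-u}^{(n)})_{n\leq N}$. Under Assumption \ref{assum_rate} distance ties occur with probability zero, so this conditioning renders the nearest-neighbour indices $k_N(1,i)$ deterministic (equal to some $h_N(i)$ with $h_N(1)=1$). By a direct adaptation of Lemma \ref{lm_tildeX}, conditionally on the $-u$-subsample the vector $(\bX_u^{(h_N(i))})_{i\leq N_I}$ is a product of independent draws from $\mathcal{L}(\bX_u\mid\bX_{-u}=\bx_{-u}^{(h_N(i))})$. Writing both the conditional expectation of $\widehat{E}_{u,1}$ and the target $\V(Y\mid\bX_{-u}=\bx_{-u}^{(1)})$ as integrals of $\Phi_{MC}$ against products of conditional densities, I apply Lemma \ref{lm_condi1} in the \emph{mix} case (where $\Phi^{mix}_{MC}$ depends on $-u$ only through the first coordinate $\bx_{-u}^{(1)}$) or Lemma \ref{lm_condi2} in the \emph{knn} case. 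Both yield
\[
\bigl|\E\bigl(\widehat{E}_{u,1}\bigm|(\bX_{-u}^{(n)})_{n\leq N}\bigr)-\V(Y\mid\bX_{-u}=\bX_{-u}^{(1)})\bigr|\leq C_{\sup}\,\max_{i\leq N_I}d\bigl(\bX_{-u}^{(1)},\bX_{-u}^{(h_N(i))}\bigr)=C_{\sup}\,d\bigl(\bX_{-u}^{(1)},\bX_{-u}^{(k_N(N_I))}\bigr),
\]
using that $k_N(N_I)$ indexes the furthest of the $N_I$ nearest neighbours.

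Finally I take (conditional) expectations. The tower property together with $E_u=\E[\V(Y\mid\bX_{-u})]$ gives $|\E(\widehat{E}_{u,1})-E_u|\leq C_{\sup}\,\E\bigl[d(\bX_{-u}^{(1)},\bX_{-u}^{(k_N(N_I))})\bigr]$, and fixing $\bX_{-u}^{(1)}=\bx_{-u}^{(1)}$ yields the analogous conditional bound. Invoking Lemma \ref{lm_Ceps} (its first inequality for the unconditional statement and its second, conditional on $\bX_{-u}^{(1)}$, for the pointwise statement) delivers the rate $C_{\sup}(\varepsilon)/N^{1/(p-|u|)-\varepsilon}$ in both cases. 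I do not anticipate a serious obstacle: the only delicate point is cleanly formalising the conditional distribution of $(\bX_u^{(h_N(i))})_i$ given the $-u$-sample and matching the two lemma variants \ref{lm_condi1} and \ref{lm_condi2} to the two estimator variants. For $\widehat{V}_{u,PF}$ the same scheme applies with $N_I=2$, roles of $u$ and $-u$ swapped, and $\Phi_{PF}$ in place of $\Phi_{MC}$; Remark~\ref{rmq} already noted that in that case the factor $N^{N_I}$ is absent, so one may even take $\varepsilon=0$ in Lemma \ref{lm_Ceps}, accounting for the sharper rate announced in Theorem \ref{thrm_vitesse_PF} when $|u|=1$.
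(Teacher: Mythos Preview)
Your proposal is correct and follows essentially the same route as the paper: condition on $(\bX_{-u}^{(n)})_{n\leq N}$ so that the nearest-neighbour indices become deterministic, identify the conditional law of $(\bX_u^{(h_N(i))})_i$ as a product of the one-dimensional conditionals (this is Lemma~\ref{lm_loi_KNN}, itself built on Lemma~\ref{lm_tildeX}), bound the resulting difference of $\Phi$-integrals by $C_{\sup}\,d(\bX_{-u}^{(1)},\bX_{-u}^{(k_N(N_I))})$ via the Lipschitz-type Lemmas~\ref{lm_condi1}/\ref{lm_condi2}, and finish with Lemma~\ref{lm_Ceps}. The only cosmetic difference is that the paper applies Lemma~\ref{lm_condi2} uniformly for both estimator variants, whereas you (correctly) note that Lemma~\ref{lm_condi1} already suffices for the \emph{mix} version since $\Phi_{MC}^{mix}$ depends on the $-u$ coordinates only through $\bx_{-u}^{(1)}$.
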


\begin{proof}
For almost all $(\bx_{-u}^{(n)})_{n}$, using the definition of the random variable $\bZ$ (in the proof of Proposition \ref{prop_asymp_sans_bias}) and using Lemma \ref{lm_loi_KNN},
\begin{eqnarray*}
& & \bigg| \E \left(\left. \Phi\left( (\bX_{-u}^{(k_N(i)[(\bX_{-u}^{(n)})_{n}])})_{i\leq N_I},(\bX_{u}^{(k_N(i)[(\bX_{-u}^{(n)})_{n}])})_{i\leq N_I} \right) \right| (\bX_{-u}^{(n)})_n=(\bx_{-u}^{(n)})_{n} \right) \\
& &  -  \E \left(\left. \Phi\left( \bZ \right) \right| \bX_{-u}^{(1)} =\bx_{-u}^{(1)} \right)  \bigg| \\
&=& \bigg| \E \left(\left. \Phi\left(  (\bx_{-u}^{(k_N(i)[(\bx_{-u}^{(n)})_{n}])})_{i\leq N_I},(\bX_{u}^{(i)})_{i\leq N_I} \right) \right| (\bX_{-u}^{(i)})_{i\leq N_I}= (\bx_{-u}^{(k_N(i)[(\bx_{-u}^{(n)})_{n}])})_{i\leq N_I} \right)  \\
& &  -  \E \left(\left. \Phi\left( (\bx_{-u}^{(1)})_{i\leq N_I},(\bX_{u}^{(i)})_{i\leq N_I} \right) \right| (\bX_{-u}^{(i)})_{i\leq N_I} =(\bx_{-u}^{(1)})_{i\leq N_I} \right)  \bigg| \\
& \leq & C_{\sup}d\left(\bx_{-u}^{(k_N(N_I)[(\bx_{-u}^{(n)})_{n}])},\bx_{-u}^{(1)}\right),
\end{eqnarray*}
thanks to Lemma \ref{lm_condi2}. Thus, using Lemma \ref{lm_Ceps}, for all $\varepsilon>0$,
\begin{align*}
\left| \E\left( \widehat{E}_{u,1}|\bX_{-u}^{(1)}=\bx_{-u}^{(1)}\right)-\V(Y|\bX_{-u}=\bx_{-u}^{(1)}) \right|& \leq  C_{\sup} \E\left(\left. d\left( \bX_{-u}^{(1)},\bX_{-u}^{(k_N(N_I))}\right) \right| \bX_{-u}^{(1)}=\bx_{-u}^{(1)} \right)\\
& \leq  C_{\sup}\frac{C_{\sup}(\varepsilon)}{N^{\frac{1}{p-|u|}-\varepsilon}}.
\end{align*}
\end{proof}

In the following, to simplify notation, we may write "$\bX_{-u}^{(1,2)}=\bx_{-u}^{(1,2)}$" for "$\bX_{-u}^{(1)}=\bx_{-u}^{(1)}$ and $\bX_{-u}^{(2)}=\bx_{-u}^{(2)}$".
\begin{lm}\label{lm_proba_gn}
For almost all $(\bx_{-u}^{(1)}, \bx_{-u}^{(2)})$ and for all $a\geq 0$, we have
$$
\PP\left( \left.d(\bx_{-u}^{(1)},\bX_{-u}^{(k_N(N_I))})\geq a \right|\bX_{-u}^{(1,2)}=\bx_{-u}^{(1,2)}\right)\leq \PP\left(\left. d(\bx_{-u}^{(1)},\bX_{-u}^{(k_{N-1}(N_I))})\geq a \right|\bX_{-u}^{(1)}=\bx_{-u}^{(1)} \right),
$$
and thus, integrating $a$ on $\R_+$,
$$
\E\left(\left. d\left( \bX_{-u}^{(k_N(N_I))},\bX_{-u}^{(1)} \right) \right|  \bX_{-u}^{(1,2)}=\bx_{-u}^{(1,2)} \right)\leq \E\left(\left. d\left( \bX_{-u}^{(k_{N-1}(N_I))},\bX_{-u}^{(1)} \right) \right|  \bX_{-u}^{(1)}=\bx_{-u}^{(1)}\right).
$$
\end{lm}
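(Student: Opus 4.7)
The plan is to recast each side as a tail probability on the number of sample observations that fall into the ball $B(\bx_{-u}^{(1)}, a)$, and then to compare these tails via a deterministic case split on whether $\bx_{-u}^{(2)}$ lies in that ball. Since the framework of Section~\ref{section_vitesse} is governed by Assumption~\ref{assum_rate}, the continuous density ensures that no distance ties occur almost surely, so $k_N(N_I)$ is the (a.s.\ unique) index of the $N_I$-th nearest point to $\bX_{-u}^{(1)}$ in $\{\bX_{-u}^{(n)}:n\in[2:N]\}$.

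The first step is to express both sides as binomial tails. Write $p := \PP_{\bX_{-u}}(B(\bx_{-u}^{(1)}, a))$. Conditionally on $\bX_{-u}^{(1,2)} = \bx_{-u}^{(1,2)}$, the remaining observations $\bX_{-u}^{(3)}, \ldots, \bX_{-u}^{(N)}$ are i.i.d.\ from $\PP_{\bX_{-u}}$, so the count $K := \#\{n\in [3:N] : \bX_{-u}^{(n)}\in B(\bx_{-u}^{(1)}, a)\}$ has a $\mathrm{Bin}(N-2, p)$ distribution. The event $\{d(\bx_{-u}^{(1)}, \bX_{-u}^{(k_N(N_I))}) \geq a\}$ is exactly the event that strictly fewer than $N_I$ of the $N-1$ candidates $\bx_{-u}^{(2)}, \bX_{-u}^{(3)}, \ldots, \bX_{-u}^{(N)}$ lie in $B(\bx_{-u}^{(1)}, a)$, i.e., $\{\1_{d(\bx_{-u}^{(1)}, \bx_{-u}^{(2)})<a} + K < N_I\}$. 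On the right-hand side, conditionally on $\bX_{-u}^{(1)} = \bx_{-u}^{(1)}$, the same reasoning applied to the sample of size $N-1$ produces $K' := \#\{n\in [2:N-1] : \bX_{-u}^{(n)}\in B(\bx_{-u}^{(1)}, a)\} \sim \mathrm{Bin}(N-2, p)$, and the event of interest reduces to $\{K' < N_I\}$. In particular, $K$ and $K'$ share the same distribution.

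The second step is the case split on $d(\bx_{-u}^{(1)}, \bx_{-u}^{(2)})$. If this distance is at least $a$, the indicator vanishes and both sides equal $\PP(\mathrm{Bin}(N-2, p) < N_I)$. If it is strictly less than $a$, the left-hand side becomes $\PP(K < N_I - 1)$, which is at most $\PP(K < N_I) = \PP(K' < N_I)$ by the trivial inclusion of events. The first inequality of the lemma follows in either case.

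The integrated form is then obtained from the layer-cake identity $\E(Z) = \int_0^\infty \PP(Z \geq a)\, da$ applied to the non-negative random variable $d(\bX_{-u}^{(1)}, \bX_{-u}^{(k_N(N_I))})$ under each of the two conditional laws, and integrating the pointwise bound over $a \in \R_+$. The only delicate point is book-keeping: one must confirm that conditioning on $\bX_{-u}^{(1,2)} = \bx_{-u}^{(1,2)}$ preserves the i.i.d.\ law of the other coordinates (immediate from the original i.i.d.\ sampling) and that the counting characterisation of the $N_I$-th nearest neighbour is valid almost surely, which is exactly what Assumption~\ref{assum_rate} guarantees through absence of ties. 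No more substantial difficulty is anticipated.
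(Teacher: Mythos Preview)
Your argument is correct, with one harmless indexing slip: in the paper's convention $k_N(1,1)=1$, so $k_N(N_I)$ is the $N_I$-th nearest point to $\bX_{-u}^{(1)}$ in $\{\bX_{-u}^{(n)}:n\in[1:N]\}$ (including point $1$ itself), not in $[2:N]$. Consequently the correct threshold in your binomial formulation is $N_I-1$ rather than $N_I$. This does not affect the proof, since the same shift applies to both sides and your case split $\1_{d(\bx_{-u}^{(1)},\bx_{-u}^{(2)})<a}+K$ versus $K'$ goes through verbatim with the shifted threshold.

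Your route is genuinely different from the paper's. The paper introduces an auxiliary index $g_N(i)$, the $i$-th nearest neighbour of $\bX_{-u}^{(1)}$ among $(\bX_{-u}^{(n)})_{n\in[1:N]\setminus\{2\}}$, and splits on the \emph{random} event $\{d(\bx_{-u}^{(1)},\bx_{-u}^{(2)})\le d(\bx_{-u}^{(1)},\bX_{-u}^{(g_N(N_I))})\}$, i.e.\ on whether $\bx_{-u}^{(2)}$ falls among the $N_I$ nearest neighbours. On that event one has $d(\bx_{-u}^{(1)},\bX_{-u}^{(k_N(N_I))})\le d(\bx_{-u}^{(1)},\bX_{-u}^{(g_N(N_I))})$; on the complement the two coincide. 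Recombining via total probability recovers the right-hand side because $g_N$ over $[1:N]\setminus\{2\}$ has the same conditional law as $k_{N-1}$ over $[1:N-1]$. Your binomial-tail reformulation with a \emph{deterministic} split on $\{d(\bx_{-u}^{(1)},\bx_{-u}^{(2)})<a\}$ is shorter: it dispenses with the auxiliary $g_N$ and the recombination step, and makes the equality case transparent. The paper's argument, by contrast, exhibits an explicit coupling between $k_N$ and $k_{N-1}$ through $g_N$, which is closer in spirit to how the lemma is later invoked.
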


\begin{proof}
Let $g_N(i)$ be the index of the $i$-th nearest neighbour of $\bX_{-u}^{(1)}$ in $(\bX_{-u}^{(n)})_{n \in [1:N]\setminus \{2\}}$. For almost all $(\bx_{-u}^{(1)}, \bx_{-u}^{(2)})$, we have
\begin{eqnarray*}
& &\PP\left( \left.d(\bx_{-u}^{(1)},\bX_{-u}^{(k_N(N_I))})\geq a \right|\bX_{-u}^{(1,2)}=\bx_{-u}^{(1,2)}\right)\\
&=&\PP\left( \left.d(\bx_{-u}^{(1)},\bX_{-u}^{(k_N(N_I))})\geq a \right|\bX_{-u}^{(1,2)}=\bx_{-u}^{(1,2)}, d(\bx_{-u}^{(1)},\bx_{-u}^{(2)})> d(\bx_{-u}^{(1)},\bX_{-u}^{(g_N(N_I))})\right) \\
& & \PP \left( \left. d(\bx_{-u}^{(1)},\bx_{-u}^{(2)})> d(\bx_{-u}^{(1)},\bX_{-u}^{(g_N(N_I))}) \right|\bX_{-u}^{(1,2)}=\bx_{-u}^{(1,2)} \right)\\
& & +\PP\left( \left.d(\bx_{-u}^{(1)},\bX_{-u}^{(k_N(N_I))})\geq a \right|\bX_{-u}^{(1,2)}=\bx_{-u}^{(1,2)}, d(\bx_{-u}^{(1)},\bx_{-u}^{(2)})\leq d(\bx_{-u}^{(1)},\bX_{-u}^{(g_N(N_I))})\right) \\
& & \PP \left( \left. d(\bx_{-u}^{(1)},\bx_{-u}^{(2)})\leq d(\bx_{-u}^{(1)},\bX_{-u}^{(g_N(N_I))}) \right|\bX_{-u}^{(1,2)}=\bx_{-u}^{(1,2)} \right).
\end{eqnarray*}
Moreover, conditionally to $\bX_{-u}^{(1,2)}=\bx_{-u}^{(1,2)}$, if $d(\bx_{-u}^{(1)},\bx_{-u}^{(2)})> d(\bx_{-u}^{(1)},\bX_{-u}^{(g_N(N_I))})$, then the $N_I$-nearest neighbours of $\bX_{-u}^{(1)}$ do not change if we do not take into account $\bX_{-u}^{(2)}$. Thus
\begin{eqnarray*}
& & \PP\left( \left.d(\bx_{-u}^{(1)},\bX_{-u}^{(k_N(N_I))})\geq a \right|\bX_{-u}^{(1,2)}=\bx_{-u}^{(1,2)}, d(\bx_{-u}^{(1)},\bx_{-u}^{(2)})> d(\bx_{-u}^{(1)},\bX_{-u}^{(g_N(N_I))})\right) \\
&=& \PP\left( \left.d(\bx_{-u}^{(1)},\bX_{-u}^{(g_N(N_I))})\geq a \right|\bX_{-u}^{(1,2)}=\bx_{-u}^{(1,2)}, d(\bx_{-u}^{(1)},\bx_{-u}^{(2)})> d(\bx_{-u}^{(1)},\bX_{-u}^{(g_N(N_I))})\right) \\
&=& \PP\left( \left.d(\bx_{-u}^{(1)},\bX_{-u}^{(g_N(N_I))})\geq a \right|\bX_{-u}^{(1)}=\bx_{-u}^{(1)}, d(\bx_{-u}^{(1)},\bx_{-u}^{(2)})> d(\bx_{-u}^{(1)},\bX_{-u}^{(g_N(N_I))})\right).
\end{eqnarray*}
Similarly, conditionally to $\bX_{-u}^{(1,2)}=\bx_{-u}^{(1,2)}$, if $d(\bx_{-u}^{(1)},\bx_{-u}^{(2)})\leq d(\bx_{-u}^{(1)},\bX_{-u}^{(g_N(N_I))}$, then $\bx_{-u}^{(2)}$ is one of the $N_I$-nearest neighbours of $\bX_{-u}^{(1)}$. Thus 
\begin{eqnarray*}
& & \PP\left( \left.d(\bx_{-u}^{(1)},\bX_{-u}^{(k_N(N_I))})\geq a \right|\bX_{-u}^{(1,2)}=\bx_{-u}^{(1,2)}, d(\bx_{-u}^{(1)},\bx_{-u}^{(2)})\leq d(\bx_{-u}^{(1)},\bX_{-u}^{(g_N(N_I))})\right) \\
& \leq & \PP\left( \left.d(\bx_{-u}^{(1)},\bX_{-u}^{(g_N(N_I))})\geq a \right|\bX_{-u}^{(1,2)}=\bx_{-u}^{(1,2)}, d(\bx_{-u}^{(1)},\bx_{-u}^{(2)})\leq d(\bx_{-u}^{(1)},\bX_{-u}^{(g_N(N_I))})\right) \\
&=& \PP\left( \left.d(\bx_{-u}^{(1)},\bX_{-u}^{(g_N(N_I))})\geq a \right|\bX_{-u}^{(1)}=\bx_{-u}^{(1)}, d(\bx_{-u}^{(1)},\bx_{-u}^{(2)})\leq d(\bx_{-u}^{(1)},\bX_{-u}^{(g_N(N_I))})\right).
\end{eqnarray*}
Finally,
\begin{eqnarray*}
& &\PP\left( \left.d(\bx_{-u}^{(1)},\bX_{-u}^{(k_N(N_I))})\geq a \right|\bX_{-u}^{(1,2)}=\bx_{-u}^{(1,2)}\right)\\
&\leq &\PP\left( \left.d(\bx_{-u}^{(1)},\bX_{-u}^{(g_N(N_I))})\geq a \right|\bX_{-u}^{(1)}=\bx_{-u}^{(1)}, d(\bx_{-u}^{(1)},\bx_{-u}^{(2)})> d(\bx_{-u}^{(1)},\bX_{-u}^{(g_N(N_I))})\right) \\
& & \PP \left( \left. d(\bx_{-u}^{(1)},\bx_{-u}^{(2)})> d(\bx_{-u}^{(1)},\bX_{-u}^{(g_N(N_I))}) \right|\bX_{-u}^{(1)}=\bx_{-u}^{(1)} \right)\\
& & +\PP\left( \left.d(\bx_{-u}^{(1)},\bX_{-u}^{(g_N(N_I))})\geq a \right|\bX_{-u}^{(1)}=\bx_{-u}^{(1)}, d(\bx_{-u}^{(1)},\bx_{-u}^{(2)})\leq d(\bx_{-u}^{(1)},\bX_{-u}^{(g_N(N_I))})\right) \\
& & \PP \left( \left. d(\bx_{-u}^{(1)},\bx_{-u}^{(2)})\leq d(\bx_{-u}^{(1)},\bX_{-u}^{(g_N(N_I))}) \right|\bX_{-u}^{(1)}=\bx_{-u}^{(1)} \right)\\
&=&\PP\left( \left.d(\bx_{-u}^{(1)},\bX_{-u}^{(g_N(N_I))})\geq a \right|\bX_{-u}^{(1)}=\bx_{-u}^{(1)}\right)\\
&=&\PP\left( \left.d(\bx_{-u}^{(1)},\bX_{-u}^{(k_{N-1}(N_I))})\geq a \right|\bX_{-u}^{(1)}=\bx_{-u}^{(1)}\right),
\end{eqnarray*}
and we proved Lemma \ref{lm_proba_gn}.
\end{proof}

\begin{prop}\label{prop_vitesse_cov}
For all $\varepsilon >0$, there exists $C_{\sup}(\varepsilon)$ such that
\begin{equation}
\left| cov(\widehat{E}_{u,1},\widehat{E}_{u,2})\right| \leq \frac{C_{\sup}(\varepsilon)}{N^{\frac{1}{p-|u|}-\varepsilon}}.
\end{equation}
\end{prop}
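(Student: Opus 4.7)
The plan is to extend the argument of Proposition \ref{prop_covE12} quantitatively, mirroring how Proposition \ref{prop_vitesse_esperance} refined Proposition \ref{prop_asymp_sans_bias}. Throughout I use Assumption \ref{assum_rate}, which guarantees that the nearest-neighbour indices are a.s.\ uniquely defined. The starting point is the law of total covariance conditioning on $\mathcal{G} := \sigma\bigl((\bX_{-u}^{(n)})_{n\in [1:N]}\bigr)$:
\begin{equation*}
\cov(\widehat{E}_{u,1}, \widehat{E}_{u,2}) = \E\bigl[\cov(\widehat{E}_{u,1}, \widehat{E}_{u,2} \mid \mathcal{G})\bigr] + \cov\bigl(\E[\widehat{E}_{u,1} \mid \mathcal{G}],\, \E[\widehat{E}_{u,2} \mid \mathcal{G}]\bigr),
\end{equation*}
and I bound each of the two terms by $C(\varepsilon)/N^{1/(p-|u|)-\varepsilon}$.

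For the ``bias-covariance'' (second) term, Lemma \ref{lm_condi2} applied as in the proof of Proposition \ref{prop_vitesse_esperance} yields $\E[\widehat{E}_{u,i} \mid \mathcal{G}] = V_i + R_i$, where $V_i := \V(Y \mid \bX_{-u}^{(i)})$ and $|R_i| \leq C_{\sup}\, d\bigl(\bX_{-u}^{(i)}, \bX_{-u}^{(k_N(i, N_I))}\bigr)$ almost surely; Lemma \ref{lm_Ceps} then gives $\E[|R_i|] \leq C_{\sup}(\varepsilon)/N^{1/(p-|u|)-\varepsilon}$. Expanding the covariance by bilinearity, $\cov(V_1, V_2) = 0$ because $V_1$ and $V_2$ are functions of the independent vectors $\bX_{-u}^{(1)}$ and $\bX_{-u}^{(2)}$; the three remaining cross terms are bounded via the elementary inequality $|\cov(X, Y)| \leq 2\|X\|_\infty \E[|Y|]$, using the uniform boundedness of $V_i$ and $R_i$.

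For the ``variance'' (first) term, I observe that conditionally on $\mathcal{G}$ the index sets $\mathcal{N}_i := \{k_N(i,1), \ldots, k_N(i, N_I)\}$ are deterministic, and on $\{\mathcal{N}_1 \cap \mathcal{N}_2 = \emptyset\}$ the estimators $\widehat{E}_{u,1}$ and $\widehat{E}_{u,2}$ depend on disjoint subsets of $(\bX_u^{(n)})_n$, which are conditionally independent given $\mathcal{G}$ by Lemma \ref{lm_tildeX}. Hence the conditional covariance vanishes on that event and is bounded by $2\|\Phi\|_\infty^2$ otherwise, giving
\begin{equation*}
\bigl|\E[\cov(\widehat{E}_{u,1}, \widehat{E}_{u,2} \mid \mathcal{G})]\bigr| \leq 2\|\Phi\|_\infty^2 \, \PP(\mathcal{N}_1 \cap \mathcal{N}_2 \neq \emptyset).
\end{equation*}

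The main obstacle is to show that the overlap probability is $O(1/N)$. I would invoke the classical geometric fact that in $\R^d$ with the Euclidean metric, any fixed point can be among the $N_I$-nearest neighbours of at most $K = K(d, N_I)$ other elements of any finite set (a packing/angular argument: the vectors from that point to its ``in-neighbours'' make pairwise angles bounded below by a positive constant depending only on $N_I$). Setting $D_n := \#\{i \in [1:N] : n \in \mathcal{N}_i\}$, one has the deterministic bound $\sum_{i\neq j} |\mathcal{N}_i \cap \mathcal{N}_j| = \sum_n D_n(D_n - 1) \leq N K(K-1)$; taking expectations and exploiting the exchangeability of the sample indices yields $\E[|\mathcal{N}_1 \cap \mathcal{N}_2|] \leq K(K-1)/(N-1)$, and Markov's inequality then gives $\PP(\mathcal{N}_1 \cap \mathcal{N}_2 \neq \emptyset) \leq K(K-1)/(N-1) = O(1/N)$. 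Since $p - |u| \geq 1$ implies $1/N \leq 1/N^{1/(p-|u|) - \varepsilon}$ for every $\varepsilon > 0$ and large $N$, combining this bound with the control of the bias-covariance term yields the claim.
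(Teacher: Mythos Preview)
Your argument is correct and takes a genuinely different---and considerably shorter---route than the paper's.

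The paper conditions on the pair $(\bX_{-u}^{(1)},\bX_{-u}^{(2)})$ only, so that the nearest-neighbour indices remain random; its Part~1 (the ``bias'' piece) is essentially the same as yours, but its Part~2 has to prove that, conditionally on $(\bX_{-u}^{(1)},\bX_{-u}^{(2)})$, the two neighbourhoods are asymptotically independent. This is done by introducing shrinking balls $B_1,B_2$ of radius $N^{-1/|{-u}|+\delta}$ around the two reference points, reducing to the joint law of the binomial counts $(N_1,N_2)$ of observations in each ball, and carrying out a delicate comparison (via Lemmas~\ref{lm_P(Gc)}, \ref{loi_Ni}, \ref{lm_francois} and a long Stirling computation) showing that $\PP(N_1=n_1,N_2=n_2)$ and $\PP(N_1=n_1)\PP(N_2=n_2)$ differ by $O(N^{\varepsilon-1})$.

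Your choice to condition on the \emph{entire} sequence $(\bX_{-u}^{(n)})_n$ freezes the neighbourhoods $\mathcal N_1,\mathcal N_2$, so the conditional covariance is literally zero on $\{\mathcal N_1\cap\mathcal N_2=\emptyset\}$; the whole second part then boils down to controlling $\PP(\mathcal N_1\cap\mathcal N_2\neq\emptyset)$. The combinatorial bound you invoke---that the in-degree in a $k$-nearest-neighbour digraph in $\R^d$ is deterministically at most a constant $K(d,k)$ (the cone/packing argument going back to Stone)---combined with exchangeability gives $\PP(\mathcal N_1\cap\mathcal N_2\neq\emptyset)\leq K(K-1)/(N-1)=O(1/N)$, which is even sharper than what the paper obtains for that term. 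The trade-off is that your proof imports this geometric lemma as a black box, while the paper's longer computation is self-contained; on the other hand, your argument makes transparent \emph{why} the covariance is small (disjoint index sets yield conditional independence), whereas the paper's binomial manipulations somewhat obscure this.

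Two minor remarks: the angular description you give (``pairwise angles bounded below by a constant depending only on $N_I$'') is literally correct only for $N_I=1$; for general $N_I$ the standard proof is the cone decomposition showing at most $N_I$ in-neighbours per cone, but the conclusion $D_n\leq K(d,N_I)$ you need is of course the same. Also, in bounding $\cov(R_1,R_2)$ you use $\|R_1\|_\infty<\infty$; this holds because $|R_1|\leq C_{\sup}\,\mathrm{diam}(\cX_{-u})$, which is finite by the compactness in Assumption~\ref{assum_rate}.
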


\begin{proof}
We use the law of total covariance,
\begin{equation}\label{eq_vitesse_cov_totale}
cov(\widehat{E}_1,\widehat{E}_2)=\E\left[cov\left( \left. \widehat{E}_1,\widehat{E}_2\right| \bX_{-u}^{(1)},\bX_{-u}^{(2)}\right) \right]+cov \left[ \E\left( \widehat{E}_{u,1}|\bX_{-u}^{(1)},\bX_{-u}^{(2)}\right),\E\left( \widehat{E}_{u,2}|\bX_{-u}^{(1)},\bX_{-u}^{(2)}\right) \right].
\end{equation}
\underline{Part 1:} First, we will bound the second term of Equation \ref{eq_vitesse_cov_totale}. Thanks to Lemma \ref{lm_condi2}, we have
\begin{eqnarray*}
& &\left| \E\left( \widehat{E}_{u,1}|\bX_{-u}^{(1)}=\bx_{-u}^{(1)},\bX_{-u}^{(2)}=\bx_{-u}^{(2)} \right)-\V\left(Y|\bX_{-u}=\bx_{-u}^{(1)}) \right) \right| \\
& \leq &  \E\left\{\left| \E \left[ \left. \Phi\left( (\bX^{(k_N(i))})_{i\leq N_I} \right) \right| \bX_{-u}^{(1)}=\bx_{-u}^{(1)},\bX_{-u}^{(2)}=\bx_{-u}^{(2)},(\bX_{-u}^{(n)})_{n\geq 3} \right] -\V\left(Y|\bX_{-u}=\bx_{-u}^{(1)}) \right)\right| \right\}\\
& \leq & C_{\sup} \E\left(\left. d\left( \bX_{-u}^{(1)},\bX_{-u}^{(k_N(N_I))}\right) \right| \bX_{-u}^{(1)}=\bx_{-u}^{(1)},\bX_{-u}^{(2)}=\bx_{-u}^{(2)} \right)     \;\;\text{\;\; using Lemma \ref{lm_condi2}, }\\\\
& \leq & C_{\sup} \E\left(\left. d\left( \bX_{-u}^{(1)},\bX_{-u}^{(k_{N-1}(N_I))}\right) \right| \bX_{-u}^{(1)}=\bx_{-u}^{(1)}\right) \;\;\text{\;\; using Lemma \ref{lm_proba_gn}, }\\
& \leq & \frac{C_{\sup}(\varepsilon)}{(N-1)^{\frac{1}{p-|u|}-\varepsilon}} \;\;\text{\;\; using Lemma \ref{lm_Ceps}, }\\
& \leq & \frac{C_{\sup}(\varepsilon)}{N^{\frac{1}{p-|u|}-\varepsilon}}.
\end{eqnarray*}
Similarly,
$$
\left| \E\left( \widehat{E}_{u,2}|\bX_{-u}^{(1)}=\bx_{-u}^{(1)},\bX_{-u}^{(2)}=\bx_{-u}^{(2)} \right)-\V\left(Y|\bX_{-u}=\bx_{-u}^{(2)}) \right) \right| \leq  \frac{C_{\sup}(\varepsilon)}{N^{\frac{1}{p-|u|}-\varepsilon}}.
$$
Thus, using that $\Phi$ is bounded,
\begin{eqnarray*}
&\left| \E\left( \widehat{E}_{u,1}|\bX_{-u}^{(1)}=\bx_{-u}^{(1)},\bX_{-u}^{(2)}=\bx_{-u}^{(2)} \right)\E\left( \widehat{E}_{u,2}|\bX_{-u}^{(1)}=\bx_{-u}^{(1)},\bX_{-u}^{(2)}=\bx_{-u}^{(2)} \right)\right.&\\
&\left. -\V\left(Y|\bX_{-u}=\bx_{-u}^{(1)}) \right)\V\left(Y|\bX_{-u}=\bx_{-u}^{(2)}) \right) \right| \leq  \frac{C_{\sup}(\varepsilon)}{N^{\frac{1}{p-|u|}-\varepsilon}}.&
\end{eqnarray*}
Moreover, using Proposition \ref{prop_vitesse_esperance}, we have
\begin{eqnarray*}
&\left| \E\left( \widehat{E}_{u,1}|\bX_{-u}^{(1)}=\bx_{-u}^{(1)} \right)\E\left( \widehat{E}_{u,2}|\bX_{-u}^{(2)}=\bx_{-u}^{(2)} \right)\right.&\\
&\left. -\V\left(Y|\bX_{-u}=\bx_{-u}^{(1)}) \right)\V\left(Y|\bX_{-u}=\bx_{-u}^{(2)}) \right) \right| \leq  \frac{C_{\sup}(\varepsilon)}{N^{\frac{1}{p-|u|}-\varepsilon}}.&
\end{eqnarray*}
Thus,
\begin{eqnarray*}
\left| \E\left( \widehat{E}_{u,1}|\bX_{-u}^{(1)}=\bx_{-u}^{(1)},\bX_{-u}^{(2)}=\bx_{-u}^{(2)} \right)\E\left( \widehat{E}_{u,2}|\bX_{-u}^{(1)}=\bx_{-u}^{(1)},\bX_{-u}^{(2)}=\bx_{-u}^{(2)} \right)\right.&\\
\left. - \E\left( \widehat{E}_{u,1}|\bX_{-u}^{(1)}=\bx_{-u}^{(1)} \right)\E\left( \widehat{E}_{u,2}|\bX_{-u}^{(2)}=\bx_{-u}^{(2)} \right) \right| &\leq  \frac{C_{\sup}(\varepsilon)}{N^{\frac{1}{p-|u|}-\varepsilon}}.
\end{eqnarray*}
Finally,
\begin{eqnarray*}
& & \left| cov \left[ \E\left( \widehat{E}_{u,1}|\bX_{-u}^{(1)},\bX_{-u}^{(2)}\right),\E\left( \widehat{E}_{u,2}|\bX_{-u}^{(1)},\bX_{-u}^{(2)}\right) \right]  \right|\\
&=& \left| \E\left[  \E\left( \widehat{E}_{u,1}|\bX_{-u}^{(1)},\bX_{-u}^{(2)} \right)\E\left( \widehat{E}_{u,2}|\bX_{-u}^{(1)},\bX_{-u}^{(2)}\right) \right] - \E \left[ \E\left( \widehat{E}_{u,1}|\bX_{-u}^{(1)}\right)\E\left( \widehat{E}_{u,2}|\bX_{-u}^{(2)} \right)   \right]  \right|\\
& \leq & \E\left[\left|  \E\left( \widehat{E}_{u,1}|\bX_{-u}^{(1)},\bX_{-u}^{(2)} \right)\E\left( \widehat{E}_{u,2}|\bX_{-u}^{(1)},\bX_{-u}^{(2)}\right) -  \E\left( \widehat{E}_{u,1}|\bX_{-u}^{(1)}\right)\E\left( \widehat{E}_{u,2}|\bX_{-u}^{(2)} \right) \right|  \right]\\
& \leq & \frac{C_{\sup}(\varepsilon)}{N^{\frac{1}{p-|u|}-\varepsilon}}.
\end{eqnarray*}

\begin{rmk}
In this Part 1, we can choose $\varepsilon=0$ for the estimators $\widehat{V}_{u,PF}$ or for $\widehat{E}_{u,MC}$ if we take $N_I=2$.
\end{rmk}

\underline{Part 2:} Let $\varepsilon>0$. We will bound the first term of Equation \ref{eq_vitesse_cov_totale}: $\E\left[cov\left( \left.\widehat{E}_1,\widehat{E}_2\right| \bX_{-u}^{(1)},\bX_{-u}^{(2)}\right) \right]$.
We want to prove that
$$
\left| \int_{\cX_{-u}^2}\E\left(\widehat{E}_{u,1}\widehat{E}_{u,2}|\bX_{-u}^{(1,2)}=\bx_{-u}^{(1,2)}\right) - \E(\widehat{E}_{u,1}|\bX_{-u}^{(1,2)}=\bx_{-u}^{(1,2)})\E(\widehat{E}_{u,2}|\bX_{-u}^{(1,2)}=\bx_{-u}^{(1,2)})d \PP_{\bX_{-u}}^{ \otimes 2}(\bx_{-u}^{(1)},\bx_{-u}^{(2)})\right| \leq  \frac{C_{\sup}(\varepsilon)}{N^{1-\varepsilon}}.
$$

Let us write 
$$
l(\bx_{-u}^{(1)},\bx_{-u}^{(2)}):=\min\left(d(\bx_{-u}^{(1)},\bx_{-u}^{(2)})\slash 2,\;\frac{1}{N^{\frac{1}{|-u|}-\delta}} \right)
$$
where $\delta=\varepsilon\slash(4|-u|)$, and
\begin{eqnarray*}
G(\bx_{-u}^{(1)},\bx_{-u}^{(2)}):=\bigg\{(\bx_{-u}^{(n)})_{n\in [3:N]}| &  d(\bx_{-u}^{(1)},\bx_{-u}^{(k_{N}(N_I)[(\bx_{-u}^{(n)})_{n\leq N}])})< l(\bx_{-u}^{(1)},\bx_{-u}^{(2)}) ,  \\
& d(\bx_{-u}^{(2)},\bx_{-u}^{(k_{N}'(N_I)[(\bx_{-u}^{(n)})_{n\leq N}])})< l(\bx_{-u}^{(1)},\bx_{-u}^{(2)})  \bigg\}.
\end{eqnarray*}

\underline{Part 2.A:} We prove the following lemmas.
\begin{lm}\label{lm_voisin_xu1_xu2}
For all $\varepsilon>0$, there exists $C_{\sup}(\varepsilon)$ such that,
\begin{equation}
\int_{\cX_{-u}^2}\PP\left(\left. d(\bX_{-u}^{(1)},\bX_{-u}^{(k_{N-1}(N_I))})\geq d(\bx_{-u}^{(1)},\bx_{-u}^{(2)})\slash 2   \right| \bX_{-u}^{(1)}=\bx_{-u}^{(1)} \right)d\PP_{\bX_{-u}}^{\otimes 2}(\bx_{-u}^{(1)},\bx_{-u}^{(2)})\leq \frac{C_{\sup}(\varepsilon)}{N^{1-\varepsilon}}.
\end{equation}
\end{lm}
\begin{proof}
We divide $\cX_{-u}^{2}$ in $F_1:=\{(\bx_{-u}^{(1)},\bx_{-u}^{(2)}) \in \cX_{-u}^2,\;d(\bx_{-u}^{(1)},\bx_{-u}^{(2)}) <(N-N_I-1)^{\frac{-1+\varepsilon}{|-u|} }\}$ and $F_2:=\{(\bx_{-u}^{(1)},\bx_{-u}^{(2)}) \in \cX_{-u}^2,\;d(\bx_{-u}^{(1)},\bx_{-u}^{(2)}) \geq (N-N_I-1)^{\frac{-1+\varepsilon}{|-u|}} \}$.
\begin{eqnarray*}
& & \int_{F_1}\PP\left(\left. d(\bX_{-u}^{(1)},\bX_{-u}^{(k_{N-1}(N_I))})\geq d(\bx_{-u}^{(1)},\bx_{-u}^{(2)})\slash 2   \right| \bX_{-u}^{(1)}=\bx_{-u}^{(1)} \right)d\PP_{\bX_{-u}}^{\otimes 2}(\bx_{-u}^{(1)},\bx_{-u}^{(2)})\\
& \leq & C_{\sup}\lambda_{|-u|}^{\otimes 2}(F_1)\\
& \leq & C_{\sup}\int_{\cX_{-u}}\lambda_{|-u|}\left(B\left[\bx_{-u},(N-N_I-1)^{\frac{-1+\varepsilon}{|-u|}} \right] \right) d\bx_{-u}\\
& \leq & C_{\sup}\int_{\cX_{-u}}(N-N_I-1)^{\frac{-1+\varepsilon}{|-u|}|-u|}dx_{-u}\\
& \leq & C_{\sup}(N-N_I-1)^{-1+\varepsilon }\\
& \leq & \frac{C_{\sup}}{N^{1-\varepsilon}}.
\end{eqnarray*}
Furthermore, using Lemma \ref{lm_proba_voisin_loin}, we have
\begin{align*}
&  \int_{F_2}\PP\left(\left. d(\bX_{-u}^{(1)},\bX_{-u}^{(k_{N-1}(N_I))})\geq d(\bx_{-u}^{(1)},\bx_{-u}^{(2)})\slash 2   \right| \bX_{-u}^{(1)}=\bx_{-u}^{(1)} \right)d\PP_{\bX_{-u}}^{\otimes 2}(\bx_{-u}^{(1)},\bx_{-u}^{(2)})\\
 \leq & \int_{F_2}  C_{\sup}(N-1)^{N_I}(1-C_{\inf} d(\bx_{-u}^{(1)},\bx_{-u}^{(2)})^{|-u|})^{N-1-N_I}d\PP_{\bX_{-u}}^{\otimes 2}(\bx_{-u}^{(1)},\bx_{-u}^{(2)})\\
 \leq & \lambda_{|-u|}(\cX_{-u})^2C_{\sup}(N-1)^{N_I}(1-C_{\inf}(N-N_I-1)^{\frac{-1+\varepsilon}{|-u|}|-u|})^{N-1-N_I}\\
 \leq &C_{\sup} (N-1)^{N_I} (1-C_{\inf}(N-N_I-1)^{-1+\varepsilon})^{N-1-N_I}\\
 \leq &C_{\sup} (N-1)^{N_I} \exp\left[ (N-1-N_I)\ln\left(1-C_{\inf}(N-N_I-1)^{-1+\varepsilon}\right) \right]\\
 \leq &C_{\sup} (N-1)^{N_I}\exp\left[ -C_{\inf}(N-N_I-1)^{\varepsilon} +o((N-N_I-1)^{\varepsilon})\right]\\
\leq & \frac{C_{\sup}(\varepsilon)}{N^{1-\varepsilon}}.
\end{align*}
\end{proof}

\begin{rmk}
In Lemma \ref{lm_voisin_xu1_xu2}, we need $\varepsilon>0$ even for the Pick-and-Freeze estimators. That explains the rate of convergence when $|u|=1$ for the Pick-and-Freeze estimators.
\end{rmk}

\begin{lm}\label{lm_P(Gc)}
For all $\varepsilon>0$, there exists $C_{\sup}(\varepsilon)$ such that,
\begin{equation}
\int_{\cX_{-u}^2}\PP_{\bX_{-u}}^{\otimes (N-2)}(G(\bx_{-u}^{(1)},\bx_{-u}^{(2)})^c) d \PP_{\bX_{-u}}^{ \otimes 2}(\bx_{-u}^{(1)},\bx_{-u}^{(2)}) \leq \frac{C_{\sup}(\varepsilon)}{N^{1-\varepsilon}}.
\end{equation}
\end{lm}

\begin{proof}
Using Lemma \ref{lm_proba_voisin_loin}, we have
\begin{equation*}
\PP\left( d(\bX_{-u}^{(k_{N-1}(N_I))},\bx_{-u}^{(1)})\geq N^{-\frac{1}{|-u|}+\delta}|\bX_{-u}^{(1)} \right)\leq   C_{\sup}(N-1)^{N_I}(1-C_{\inf} N^{-1+\delta |-u|})^{N-1-N_I} ,
\end{equation*}
so
\begin{equation}\label{eq_proba_pas_boule}
\PP\left( d(\bX_{-u}^{(k_{N-1}(N_I))},\bx_{-u}^{(1)}) \geq N^{-\frac{1}{|-u|}+\delta}|\bX_{-u}^{(1)} \right) \leq  \frac{C_{\sup}(\varepsilon)}{N}.
\end{equation}
Thus, we have
\begin{eqnarray*}
& & \int_{\cX_{-u}^2}\PP_{\bX_{-u}}^{\otimes (N-2)}(G(\bx_{-u}^{(1)},\bx_{-u}^{(2)})^c) d \PP_{\bX_{-u}}^{ \otimes 2}(\bx_{-u}^{(1)},\bx_{-u}^{(2)})\\
& \leq & \int_{\cX_{-u}^2} \PP\left(\left. d(\bX_{-u}^{(1)},\bX_{-u}^{(k_N(N_I))})\geq d(\bx_{-u}^{(1)},\bx_{-u}^{(2)})\slash 2   \right| \bX_{-u}^{(1,2)}=\bx_{-u}^{(1,2)} \right)  d \PP_{\bX_{-u}}^{ \otimes 2}(\bx_{-u}^{(1)},\bx_{-u}^{(2)})\\
&  &+ \int_{\cX_{-u}^2} \PP\left(\left. d(\bX_{-u}^{(2)},\bX_{-u}^{(k'_N(N_I))})\geq d(\bx_{-u}^{(1)},\bx_{-u}^{(2)})\slash 2   \right| \bX_{-u}^{(1,2)}=\bx_{-u}^{(1,2)} \right)  d \PP_{\bX_{-u}}^{ \otimes 2}(\bx_{-u}^{(1)},\bx_{-u}^{(2)})\\ 
& &+ \int_{\cX_{-u}^2} \PP\left(\left. d(\bX_{-u}^{(1)},\bX_{-u}^{(k_N(N_I))})\geq N^{-\frac{1}{|-u|}+\delta} \right| \bX_{-u}^{(1,2)}=\bx_{-u}^{(1,2)} \right)  d \PP_{\bX_{-u}}^{ \otimes 2}(\bx_{-u}^{(1)},\bx_{-u}^{(2)})\\
&  &+ \int_{\cX_{-u}^2} \PP\left(\left. d(\bX_{-u}^{(2)},\bX_{-u}^{(k'_N(N_I))})\geq N^{-\frac{1}{|-u|}+\delta}   \right| \bX_{-u}^{(1,2)}=\bx_{-u}^{(1,2)} \right)  d \PP_{\bX_{-u}}^{ \otimes 2}(\bx_{-u}^{(1)},\bx_{-u}^{(2)})\\ 
& \leq & \int_{\cX_{-u}^2} \PP\left(\left. d(\bX_{-u}^{(1)},\bX_{-u}^{(k_{N-1}(N_I))})\geq d(\bx_{-u}^{(1)},\bx_{-u}^{(2)})\slash 2   \right| \bX_{-u}^{(1)}=\bx_{-u}^{(1)} \right)  d \PP_{\bX_{-u}}^{ \otimes 2}(\bx_{-u}^{(1)},\bx_{-u}^{(2)})\\
& & +\int_{\cX_{-u}^2} \PP\left(\left. d(\bX_{-u}^{(2)},\bX_{-u}^{(k'_{N-1}(N_I))})\geq d(\bx_{-u}^{(1)},\bx_{-u}^{(2)})\slash 2   \right| \bX_{-u}^{(2)}=\bx_{-u}^{(2)} \right)  d \PP_{\bX_{-u}}^{ \otimes 2}(\bx_{-u}^{(1)},\bx_{-u}^{(2)})\\
& & + \int_{\cX_{-u}^2} \PP\left(\left. d(\bX_{-u}^{(1)},\bX_{-u}^{(k_{N-1}(N_I))})\geq N^{-\frac{1}{|-u|}+\delta}   \right| \bX_{-u}^{(1)}=\bx_{-u}^{(1)} \right)  d \PP_{\bX_{-u}}^{ \otimes 2}(\bx_{-u}^{(1)},\bx_{-u}^{(2)})\\
& & +\int_{\cX_{-u}^2} \PP\left(\left. d(\bX_{-u}^{(2)},\bX_{-u}^{(k'_{N-1}(N_I))})\geq N^{-\frac{1}{|-u|}+\delta}   \right| \bX_{-u}^{(2)}=\bx_{-u}^{(2)} \right)  d \PP_{\bX_{-u}}^{ \otimes 2}(\bx_{-u}^{(1)},\bx_{-u}^{(2)}),
\end{eqnarray*}
and we conclude the proof of Lemma \ref{lm_P(Gc)} using Lemma \ref{lm_voisin_xu1_xu2} and Equation \ref{eq_proba_pas_boule}.
\end{proof}

For $i=1,2$, let $B_i$ be the ball of center $\bx_{-u}^{(i)}$ and of radius $l(\bx_{-u}^{(1)},\bx_{-u}^{(2)})$, let $p_i$ be the probability of $B_i$ and $N_i$ be the number of observations $(\bX_{-u}^{(n)})_{n\in [3:N]}$ in the ball $B_i$. Remark that
$$
p_i\leq \frac{C_{\sup}}{N^{1-\delta|-u|}}.
$$
We have the two following lemmas.

\begin{lm}\label{loi_Ni}
Conditionally to $\bX_{-u}^{(1,2)}=\bx_{-u}^{(1,2)}$, the random variable $N_i$ is binomial $\mathcal{B}(N-2,p_i)$.\\
Conditionally to $\bX_{-u}^{(1,2)}=\bx_{-u}^{(1,2)}, N_j=n_j$, the random variable $N_i$ is binomial $\mathcal{B}(N-2-n_j,p_i(1-p_j)^{-1})$.
\end{lm}

\begin{proof}
For the first assertion, we use that the $(\bX_{-u}^{(n)})_n$ are i.i.d. For the second assertion, we compute 
$\PP(N_i=n_i|\bX_{-u}^{(1,2)}=\bx_{-u}^{(1,2)},N_j=n_j)$ with Bayes' theorem. 
\end{proof}

\begin{lm}\label{lm_francois}
If $N_i=n_i$, let $\bX_{-u}^{(\bM_i)}$ be the random vector composed of the $n_i$ observations in $B_i$ of $(\bX_{-u}^{(n)})_{n \in [3:N]}$ and $\bM_i \in [3:N]^{n_i}$ the vector containing the corresponding indices. We have:
\begin{eqnarray*}
& & \mathcal{L}\left(\bX^{(\bM_1)},\bX^{(\bM_2)}|\bX_{-u}^{(1,2)}=\bx_{-u}^{(1,2)},N_1=n_1,N_2=n_2\right)\\
&=&\mathcal{L}\left(\bX^{(\bM_1)}|\bX_{-u}^{(1,2)}=\bx_{-u}^{(1,2)},N_1=n_1\right)\otimes \mathcal{L}\left(\bX^{(\bM_2)}|\bX_{-u}^{(1,2)}=\bx_{-u}^{(1,2)},N_2=n_2\right).
\end{eqnarray*}
\end{lm}

\begin{proof}
For any bounded Borel functions $\phi_1,\;\phi_2$, we have
\begin{eqnarray*}
& & \E\left(\phi_1(\bX^{(\bM_1)}) \phi_2(\bX^{(\bM_2)})|\bX_{-u}^{(1,2)}=\bx_{-u}^{(1,2)},N_1=n_1,N_2=n_2 \right)\\
&=&\frac{ \E\left(\phi_1(\bX^{(\bM_1)}) \phi_2(\bX^{(\bM_2)})\mathds{1}_{N_1=n_1}\mathds{1}_{N_2=n_2} |\bX_{-u}^{(1,2)}=\bx_{-u}^{(1,2)}\right)}{\PP(N_1=n_1,N_2=n_2|\bX_{-u}^{(1,2)}=\bx_{-u}^{(1,2)})}.
\end{eqnarray*}
Let 
$$
\mathcal{P}([3:N],n_1):=\{(k_1,\cdots,k_{n_1})\in [3:N]^{n_1}\; : \; k_i< k_j \; \text{for}\; i,j\in[1:n_1],\; i< j\}
$$
be the set of all possible two-by-two distinct elements in $[3:N]$.
To simplify notation, we also consider an element of $\mathcal{P}([3:N],n_1)$ with the subset of $[3:N]$ that contains its indices. We have
\begin{eqnarray*}
& & \E\left(\phi_1(\bX^{(\bM_1)}) \phi_2(\bX^{(\bM_2)})\mathds{1}_{N_1=n_1}\mathds{1}_{N_2=n_2} \middle|\bX_{-u}^{(1,2)}=\bx_{-u}^{(1,2)}\right)\\
&=& \sum_{\bm_1 \in \mathcal{P}([3:N],n_1)} \sum_{\bm_2 \in \mathcal{P}([3:N]\setminus \bm_1, n_2)}\E\Big(\phi_1(\bX^{(\bm_1)}) \phi_2(\bX^{(\bm_2)})\mathds{1}_{\bX_{-u}^{(\bm_1)}\in B_1^{n_1}}\mathds{1}_{\bX_{-u}^{(\bm_2)}\in B_2^{n_2}}\\
& & \times \prod_{i \in [3:N]\setminus(\bm_1 \cup \bm_2)}\mathds{1}_{\bX_{-u}^{(i)}\notin B_1\cup B_2} \Big|\bX_{-u}^{(1,2)}=\bx_{-u}^{(1,2)}\Big)
\end{eqnarray*}
Now, using the independence of $(\bX^{(n)})_n$ and summing over $\bm_1$ and $\bm_2$, we have, for any value of $\bm_1 \in \mathcal{P}([3:N],n_1)$ and $\bm_2 \in  \mathcal{P}([3:N],n_2)$,
\begin{eqnarray*}
& & \E\left(\phi_1(\bX^{(\bM_1)}) \phi_2(\bX^{(\bM_2)})|\bX_{-u}^{(1,2)}=\bx_{-u}^{(1,2)},N_1=n_1,N_2=n_2 \right)\\
&=& \begin{pmatrix}N-2 \\ n_1 \end{pmatrix} \begin{pmatrix}N-2-n_1 \\ n_2 \end{pmatrix} (1-p_1-p_2)^{N-2-n_1-n_2} \\
& & \frac{\E\left( \phi_1(\bX^{(\bm_1)})\mathds{1}_{\bX_{-u}^{(\bm_1)}\in B_1^{n_1}} \middle|  \bX_{-u}^{(1,2)}=\bx_{-u}^{(1,2)} \right)\E\left( \phi_2(\bX^{(\bm_2)})\mathds{1}_{\bX_{-u}^{(\bm_2)}\in B_2^{n_2}} \middle|  \bX_{-u}^{(1,2)}=\bx_{-u}^{(1,2)} \right) }{\begin{pmatrix}N-2 \\ n_1 \end{pmatrix} \begin{pmatrix}N-2-n_1 \\ n_2 \end{pmatrix} p_1^{n_1} p_2^{n_2} (1-p_1-p_2)^{N-2-n_1-n_2}}\\
&=& \frac{\E\left( \phi_1(\bX^{(\bm_1)})\mathds{1}_{\bX_{-u}^{(\bm_1)}\in B_1^{n_1}} \middle|  \bX_{-u}^{(1,2)}=\bx_{-u}^{(1,2)} \right)}{ p_1^{n_1}} \frac{\E\left( \phi_2(\bX^{(\bm_2)})\mathds{1}_{\bX_{-u}^{(\bm_2)}\in B_2^{n_2}} \middle|  \bX_{-u}^{(1,2)}=\bx_{-u}^{(1,2)} \right) }{ p_2^{n_2}}\\
&=& \frac{\E\left( \phi_1(\bX^{(\bm_1)})\mathds{1}_{\bX_{-u}^{(\bm_1)}\in B_1^{n_1}}  \prod\limits_{i \in [3:N]\setminus \bm_1}\mathds{1}_{\bX_{-u}^{(i)}\notin B_1} \middle|  \bX_{-u}^{(1,2)}=\bx_{-u}^{(1,2)} \right)}{ p_1^{n_1}(1-p_1)^{n_1}}\\
& & \frac{\E\left( \phi_2(\bX^{(\bm_2)})\mathds{1}_{\bX_{-u}^{(\bm_2)}\in B_2^{n_2}}  \prod\limits_{i \in [3:N]\setminus \bm_2}\mathds{1}_{\bX_{-u}^{(i)}\notin B_2} \middle|  \bX_{-u}^{(1,2)}=\bx_{-u}^{(1,2)} \right)}{ p_2^{n_2}(1-p_2)^{n_2}}\\
&=&\E\left(\phi_1(\bX^{(\bM_1)}) \middle|\bX_{-u}^{(1,2)}=\bx_{-u}^{(1,2)},N_1=n_1 \right)\E\left(\phi_2(\bX^{(\bM_2)}) \middle|\bX_{-u}^{(1,2)}=\bx_{-u}^{(1,2)},N_2=n_2 \right).
\end{eqnarray*}
That concludes the proof of Lemma \ref{lm_francois}.
\end{proof}

\underline{Part 2.B:} We aim to proving that
$$
\left| \int_{\cX_{-u}^2}\E\left(\widehat{E}_{u,1}\widehat{E}_{u,2}|\bX_{-u}^{(1,2)}=\bx_{-u}^{(1,2)}\right) - \E(\widehat{E}_{u,1}|\bX_{-u}^{(1,2)}=\bx_{-u}^{(1,2)})\E(\widehat{E}_{u,2}|\bX_{-u}^{(1,2)}=\bx_{-u}^{(1,2)})d \PP_{\bX_{-u}}^{ \otimes 2}(\bx_{-u}^{(1)},\bx_{-u}^{(2)})\right| \leq  \frac{C_{\sup}(\varepsilon)}{N^{1-\varepsilon}}.
$$
To simplify notation, let $\bX^{(\bk_N)}:=(\bX^{(k_N(i))})_{i\leq N_I}$ and $\bX^{(\bk'_N)}:=(\bX^{(k'_N(i))})_{i\leq N_I}$.
We have
\begin{eqnarray*}
& & \E\left(\Phi(\bX^{(\bk_N)})\Phi(\bX^{(\bk_N')})|\bX_{-u}^{(1,2)}=\bx_{-u}^{(1,2)}\right)\\
&=&\sum_{n_1,n_2=0}^{N-2}\E\left(\Phi(\bX^{(\bk_N)})|N_1=n_1,\bX_{-u}^{(1,2)}=\bx_{-u}^{(1,2)}\right) \E\left(\Phi(\bX^{(\bk_N')})|N_2=n_2,\bX_{-u}^{(1,2)}=\bx_{-u}^{(1,2)}\right)\\
& & \times \PP(N_1=n_1,N_2=n_2|\bX_{-u}^{(1,2)}=\bx_{-u}^{(1,2)}).
\end{eqnarray*}
On the other hand, we have
\begin{eqnarray*}
& & \E\left(\Phi(\bX^{(\bk_N)})|\bX_{-u}^{(1,2)}=\bx_{-u}^{(1,2)}\right)\E\left( \Phi(\bX^{(\bk_N')})|\bX_{-u}^{(1,2)}=\bx_{-u}^{(1,2)}\right)\\
&=&\sum_{n_1,n_2=0}^{N-2}\E\left(\Phi(\bX^{(\bk_N)})|N_1=n_1,\bX_{-u}^{(1,2)}=\bx_{-u}^{(1,2)}\right) \E\left(\Phi(\bX^{(\bk_N')})|N_2=n_2,\bX_{-u}^{(1,2)}=\bx_{-u}^{(1,2)}\right)\\
& &\times \PP(N_1=n_1|\bX_{-u}^{(1,2)}=\bx_{-u}^{(1,2)})\PP(N_2=n_2|\bX_{-u}^{(1,2)}=\bx_{-u}^{(1,2)}).
\end{eqnarray*}
Thus, using that $\Phi$ is bounded and using Lemma \ref{lm_P(Gc)}, it suffices to show that
\begin{eqnarray*}
&\sum_{n_1,n_2=N_I-1}^{N-2}&\big|  \PP(N_1=n_1,N_2=n_2|\bX_{-u}^{(1,2)}=\bx_{-u}^{(1,2)})\\
&&-\PP(N_1=n_1|\bX_{-u}^{(1,2)}=\bx_{-u}^{(1,2)})\PP(N_2=n_2|\bX_{-u}^{(1,2)}=\bx_{-u}^{(1,2)}) \big|\leq \frac{C_{\sup}(\varepsilon)}{N^{1-\varepsilon}}.
\end{eqnarray*}
Let $K_N:=\lfloor N^{\alpha} \rfloor$, where $\alpha=\varepsilon\slash3$.
We divide the previous sum into two sums:
\begin{eqnarray*}
A(\bx_{-u}^{(1)},\bx_{-u}^{(2)}):=&\sum_{n_1,n_2=N_I-2}^{K_N}&\big|  \PP(N_1=n_1,N_2=n_2|\bX_{-u}^{(1,2)}=\bx_{-u}^{(1,2)})\\
&&-\PP(N_1=n_1|\bX_{-u}^{(1,2)}=\bx_{-u}^{(1,2)})\PP(N_2=n_2|\bX_{-u}^{(1,2)}=\bx_{-u}^{(1,2)}) \big|,\\
B(\bx_{-u}^{(1)},\bx_{-u}^{(2)}):=&\displaystyle\sum_{\substack{n_1,n_2=N_I-1,\\ n_1> K_N \text{ or } n_2 > K_N}}^{N-2}&\big|  \PP(N_1=n_1,N_2=n_2|\bX_{-u}^{(1,2)}=\bx_{-u}^{(1,2)})\\
&&-\PP(N_1=n_1|\bX_{-u}^{(1,2)}=\bx_{-u}^{(1,2)})\PP(N_2=n_2 | \bX_{-u}^{(1,2)}=\bx_{-u}^{(1,2)}) \big|.
\end{eqnarray*}
Let us bound these two terms.

First, we have
\begin{eqnarray*}
A(\bx_{-u}^{(1)},\bx_{-u}^{(2)})=&\sum_{n_1,n_2=N_I-1}^{K_N}&\PP(N_1=n_1|\bX_{-u}^{(1,2)}=\bx_{-u}^{(1,2)})\PP(N_2=n_2|N_1=n_1,\bX_{-u}^{(1,2)}=\bx_{-u}^{(1,2)})\\
& & \times \left| 1-\frac{\PP(N_2=n_2|\bX_{-u}^{(1,2)}=\bx_{-u}^{(1,2)})}{\PP(N_2=n_2|N_1=n_1,\bX_{-u}^{(1,2)}=\bx_{-u}^{(1,2)})}\right|.
\end{eqnarray*}
Thus, it suffices to bound 
$$
 \left| 1-\frac{\PP(N_2=n_2|\bX_{-u}^{(1,2)}=\bx_{-u}^{(1,2)})}{\PP(N_2=n_2|N_1=n_1,\bX_{-u}^{(1,2)}=\bx_{-u}^{(1,2)})}\right| \leq \frac{C_{\sup}(\varepsilon)}{N^{1-\varepsilon}}.
$$
Thus, it suffices to show
$$
\left| \log\left(\frac{\PP(N_2=n_2|\bX_{-u}^{(1,2)}=\bx_{-u}^{(1,2)})}{\PP(N_2=n_2|N_1=n_1,\bX_{-u}^{(1,2)}=\bx_{-u}^{(1,2)})}\right) \right| \leq \frac{C_{\sup}(\varepsilon)}{N^{1-\varepsilon}}.
$$
To simplify notation, let $T=N-2$.
Thanks to Lemma \ref{loi_Ni}, we have,
\begin{eqnarray*}
& & \log\left(\frac{\PP(N_2=n_2|\bX_{-u}^{(1,2)}=\bx_{-u}^{(1,2)})}{\PP(N_2=n_2|N_1=n_1,\bX_{-u}^{(1,2)}=\bx_{-u}^{(1,2)})}\right)\\
&=& \log\left(\frac{T(T-1)...(T-n_1+1)}{(T-n_2)(T-n_2-1)...(T-n_2-n_1+1)}\frac{(1-p_1)^{T-n_1}(1-p_2)^{T-n_2}}{(1-p_1-p_2)^{T-n_1-n_2}}\right)\\
&=&\log \left(1(1-\frac{1}{T})...(1-\frac{n_1-1}{T})\right)-\log \left((1-\frac{n_2}{T})(1-\frac{n_2+1}{T})...(1-\frac{n_2+n_1-1}{T})\right)\\
& & (T-n_1)\log(1-p_1)+(T-n_2)\log(1-p_2)-(T-n_1-n_2)\log(1-p_1-p_2)\\
&=&-\frac{n_1(n_1-1)}{2T}+n_1O(\frac{n_1^2}{T^2})+\frac{n_1(n_1+2n_2-1)}{2T}+n_1O(\frac{(n_1+n_2)^2}{T^2})\\
& & -(T-n_2)p_2+(T-n_2)O(p_2^2)-(T-n_1)p_1+(T-n_1)O(p_1^2)\\
& & +(T-n_1-n_2)(p_1+p_2)+(T-n_1-n_2)O((p_1+p_2)^2)\\
&=&\frac{n_1n_2}{T}+O(\frac{n_1^3}{T})+O(\frac{n_1(n_1+n_2)^2}{T^2})-n_2p_1-n_1p_2\\
& & +(T-n_2)O(p_1^2)+(T-n_1)O(p_2^2)+(T-n_1-n_2)O((p_1+p_2)^2).
\end{eqnarray*}
We know that 
$$
K_N p_i\leq  \frac{C_{\sup}}{N^{1-\delta|-u|-\alpha}}\leq  \frac{C_{\sup}}{N^{1-\varepsilon}} .
$$
So, for all $n_1\leq K_N$ and all $n_2 \leq K_N$,
$$
\left| \log\left(\frac{\PP(N_2=n_2|\bX_{-u}^{(1,2)}=\bx_{-u}^{(1,2)})}{\PP(N_2=n_2|N_1=n_1,\bX_{-u}^{(1,2)}=\bx_{-u}^{(1,2)})}\right) \right| \leq \frac{C_{\sup}(\varepsilon)}{N^{1-\varepsilon}}.
$$
Thus, we have shown that we have
$$
A(\bx_{-u}^{(1)},\bx_{-u}^{(2)})\leq \frac{C_{\sup}}{N^{1-\varepsilon}}.
$$
Now, let us bound $B(\bx_{-u}^{(1)},\bx_{-u}^{(2)})$. Remark that $\{(n_1,n_2)\in [N_I-1:N-2]|\;n_1> K_N \text{ or } n_2 > K_N \}$ is a subset of
$$
 \left([K_N+1:N-2]\times [N_I-1:N-2] \right) \cup \left( [N_I-1:N-2]\times [K_N+1:N-2]\right).
$$
Thus, it suffices to bound
\begin{eqnarray*}
&\sum_{n_1=K_N+1}^{N-2} \sum_{n_2=N_I-1}^{N-2}&\big|  \PP(N_1=n_1,N_2=n_2|\bX_{-u}^{(1,2)}=\bx_{-u}^{(1,2)})\\
&&-\PP(N_1=n_1|\bX_{-u}^{(1,2)}=\bx_{-u}^{(1,2)})\PP(N_2=n_2|\bX_{-u}^{(1,2)}=\bx_{-u}^{(1,2)}) \big| \\
&=&\sum_{n_1=K_N+1}^{N-2} \PP(N_1=n_1|\bX_{-u}^{(1,2)}=\bx_{-u}^{(1,2)})\\
 & & \sum_{n_2=N_I-1}^{N-2} \big|\PP(N_2=n_2|N_1=n_1,\bX_{-u}^{(1,2)}=\bx_{-u}^{(1,2)}) -\PP(N_2=n_2|\bX_{-u}^{(1,2)}=\bx_{-u}^{(1,2)}) \big|.
\end{eqnarray*}
Thus, it suffices to bound
$$
\sum_{n_1=K_N+1}^{N-2} \PP(N_1=n_1|\bX_{-u}^{(1,2)}=\bx_{-u}^{(1,2)}).
$$
Let $T:=N-2$. We know that $N_1$ has a binomial distribution with parameters $T$ and $p_1$. Thus, 
$$
\E(N_1)=p_1 T \leq C_{\sup}N^{\delta|-u|}\leq C_{\sup} N^{\frac{\varepsilon}{4}}.
$$
Thus, there exists $N_\varepsilon$ such that for $N\geq N_\varepsilon$, we have that, $\E(N_1)\leq K_T+1$. Thus, for $N$ large enough and for all $n_1> K_T$ and, we have
$$
\PP(N_1=n_1|\bX_{-u}^{(1,2)}=\bx_{-u}^{(1,2)})\leq \PP(N_1=K_T+1|\bX_{-u}^{(1,2)}=\bx_{-u}^{(1,2)}).
$$ 
Thus, for $N\geq N_\varepsilon$,
\begin{eqnarray*}
& & \sum_{n_1=K_N+1}^{N-2} \PP(N_1=n_1|\bX_{-u}^{(1,2)}=\bx_{-u}^{(1,2)})\\
& \leq & (T-K_T)\PP(N_1=K_T+1|\bX_{-u}^{(1,2)}=\bx_{-u}^{(1,2)})\\
&=&(T-K_T)\frac{T!}{(T-K_T-1)!(K_T+1)!}p_1^{K_T+1}(1-p_1)^{T-K_T+1}\\
& \leq &  (T-K_T)\frac{T! }{(T-K_T-1)!(K_T+1)!}p_1^{K_T+1}\\
& \leq & C_{\sup} \frac{(T-K_T)\sqrt{2\pi T} \left( \frac{T}{e}\right)^T \left( \frac{C_{\sup}}{T^{1-\delta|-u|}}\right)^{K_T+1}}{\sqrt{2\pi (K_T+1)}\left( \frac{K_T+1}{e} \right)^{(K_T+1)}\sqrt{2\pi (T-K_T-1)}\left( \frac{T-K_T-1}{e} \right)^{(T-K_T-1)}}\\
& \leq &  C_{\sup} \frac{(T-K_T)\sqrt{T}T^T C_{\sup}^{K_T+1}}{\sqrt{(K_T+1)(T-K_T-1)}(K_T+1)^{K_T+1}(T-K_T-1)^{T-K_T-1}T^{(1-\delta|-u|)(K_T+1)}}\\
& \leq &  C_{\sup} (T-K_T)^{K_T+\frac{3}{2}-T}(K_T+1)^{-K_T-\frac{3}{2}}T^{T-\frac{1}{2}+\delta |-u|(K_T+1)-K_T}C_{\sup}^{K_T+1}.
\end{eqnarray*}
Using the Taylor expansion of $x\mapsto \log(1-x)$ at $0$, we can see that 
$$
(T-K_T)^{-T}T^T\leq C_{\sup} \exp(K_T)\leq C_{\sup}^{K_T}.
$$ 
Moreover, we have
$$
(K_T+1)T^{1-\delta |-u|}\geq T^{\frac{\varepsilon}{3}}T^{1-\frac{\varepsilon}{4}}= T^{1+\frac{\varepsilon}{12}},
$$
and so
\begin{eqnarray*}
(T-K_T)^{K_T}(K_T+1)^{-K_T}T^{-K_T(1-\delta |-u|)}C_{\sup}^{K_T} &\leq &\exp\left( K_T \log \left[ C_{\sup} \frac{T-K_T}{T^{1+\frac{\varepsilon}{12}}}\right] \right)\\
& \leq & C_{\sup}(\varepsilon)e^{-K_T}.
\end{eqnarray*}
Thus, we have
\begin{eqnarray*}
& & \sum_{n_1=K_N+1}^{N-2} \PP(N_1=n_1|\bX_{-u}^{(1,2)}=\bx_{-u}^{(1,2)})\\
& \leq & C_{\sup}(\varepsilon)e^{-K_T}(T-K_T)^{\frac{3}{2}}(K_T+1)^{-\frac{3}{2}}T^{-\frac{1}{2}+\delta|-u|}\\
& \leq & \frac{C_{\sup}(\varepsilon)}{T}\\
& \leq & \frac{C_{\sup}(\varepsilon)}{N}.
\end{eqnarray*}
Finally, we have
$$
A(\bx_{-u}^{(1)},\bx_{-u}^{(2)})\leq \frac{C_{\sup}}{N^{1-\varepsilon}},\text{\;\;\; and \;\;\;}B(\bx_{-u}^{(1)},\bx_{-u}^{(2)})\leq \frac{C_{\sup}(\varepsilon)}{N}.
$$
Thus
\begin{eqnarray*}
&\sum_{n_1,n_2=N_I}^{N}&\big|  \PP(N_1=n_1,N_2=n_2|\bX_{-u}^{(1,2)}=\bx_{-u}^{(1,2)})\\
&&-\PP(N_1=n_1|\bX_{-u}^{(1,2)}=\bx_{-u}^{(1,2)})\PP(N_2=n_2|\bX_{-u}^{(1,2)}=\bx_{-u}^{(1,2)}) \big|\leq \frac{C_{\sup}(\varepsilon)}{N^{1-\varepsilon}}.
\end{eqnarray*}
So, we have proved Proposition \ref{prop_vitesse_cov}.
\end{proof}

We conclude by the proof of Theorem \ref{thrm_vitesse_MC}.
\begin{proof}
\begin{eqnarray*}
& & \PP\left( \left| \widehat{E}_{u}-E_u \right| >\varepsilon \right)\\
& \leq & \PP\left( \left| \widehat{E}_{u}-\E(\widehat{E}_{u}) \right| > \frac{\varepsilon}{2} \right)+ \PP\left( \left|\E(\widehat{E}_{u})-E_u \right| > \frac{\varepsilon}{2} \right).
\end{eqnarray*}
Then, we use the proof of Proposition \ref{prop_conv_proba_biaise}. If $(s(l))_{l\leq N_u}$ is a sample of uniformly distributed variables on $[1:N]$ with replacement, then for all $\varepsilon>0$,
\begin{eqnarray*}
\PP\left( \left| \widehat{E}_{u}-\E(\widehat{E}_{u}) \right| > \frac{\varepsilon}{2} \right) & \leq &  \frac{4}{\varepsilon^2}\left(\left|cov\left( \widehat{E}_{u,1},\widehat{E}_{u,2}\right)\right|+\V\left( \widehat{E}_{u,1}\right)\left( \frac{1}{N}+\frac{1}{N_u}\right) \right)\\
& \leq & \frac{1}{\varepsilon^2}\left( \frac{C_{\sup}(\varepsilon')}{N^{\frac{1}{p-|u|}-\varepsilon'}}+\frac{C_{\sup}}{N_u} \right),
\end{eqnarray*}
for all $\varepsilon'>0$, thanks to Proposition \ref{prop_vitesse_cov}.
 If $(s(l))_{l\leq N_u}$ is a sample of uniformly distributed variables on $[1:N]$ without replacement, then for all $\varepsilon>0$,
\begin{eqnarray*}
\PP\left( \left| \widehat{E}_{u}-\E(\widehat{E}_{u}) \right| > \frac{\varepsilon}{2} \right) & \leq & \frac{4}{\varepsilon^2}\left(\frac{N_u-1}{N_u}cov\left( \widehat{E}_{u,1},\widehat{E}_{u,2}\right)+\frac{1}{N_u}\V\left( \widehat{E}_{u,1}\right) \right)\\
& \leq & \frac{1}{\varepsilon^2}\left( \frac{C_{\sup}(\varepsilon')}{N^{\frac{1}{p-|u|}-\varepsilon'}}+\frac{C_{\sup}}{N_u} \right),
\end{eqnarray*}
for all $\varepsilon'>0$, thanks to Proposition \ref{prop_vitesse_cov}.
Moreover, for all $\varepsilon>0$,
\begin{eqnarray*}
 \PP\left( \left|\widehat{E}_{u}-E_u \right| > \frac{\varepsilon}{2} \right) & \leq & \frac{2}{\varepsilon}\left|\E(\widehat{E}_{u})-E_u \right|\\
 & \leq & \frac{C_{\sup}(\varepsilon')}{\varepsilon N^{\frac{1}{p-|u|}-\varepsilon'}},
\end{eqnarray*}
for all $\varepsilon'>0$, thanks to Proposition \ref{prop_vitesse_esperance}.
Finally, for all $\varepsilon>0$, $\varepsilon'>0$, we have
\begin{equation*}
 \PP\left( \left| \widehat{E}_{u}-E_u \right| >\varepsilon \right) \leq \frac{1}{\varepsilon^2}\left( \frac{C_{\sup}(\varepsilon')}{N^{\frac{1}{p-|u|}-\varepsilon'}}+\frac{C_{\sup}}{N_u} \right).
\end{equation*}
That concludes the proof.
\end{proof}

\section{Other proofs}
\leavevmode\par
\bigskip

\textbf{Proof of Proposition \ref{prop_paf}}
\begin{proof}
\begin{eqnarray*}
& & \E(f(\bX)f(\bX^{u}))\\
&=&\E(\E(f(\bX)f(\bX^{u})|\bX_u))\\
&=&\E\left(\int_{\cX_{-u}^2}f(\bX_u,\bx_{-u})f(\bX_u,\bx_{-u}')d\PP_{\bX_{-u}|\bX_u}\otimes \PP_{\bX_{-u}|\bX_u}(\bx_{-u},\bx_{-u}')   \right)\\
&=&\E\left(\int_{\cX_{-u}}f(\bX_u,\bx_{-u})d\PP_{\bX_{-u}|\bX_u}(\bx_{-u}) \int_{\cX_{-u}}f(\bX_u,\bx_{-u}')d\PP_{\bX_{-u}|\bX_u}(\bx_{-u}')   \right)\\
&=& \E\left(\E(f(\bX)|\bX_u)^2\right).
\end{eqnarray*}
That concludes the proof of Proposition \ref{prop_paf}.
\end{proof}

\textbf{Proof of Proposition \ref{prop_min_variance}}
\begin{proof}
Let
$$
A_{i,u}:=\left\{ \begin{array}{ll}
-\frac{1}{p}\begin{pmatrix}
p-1\\ |u|
\end{pmatrix}^{-1} & \text{if }i \notin u\\
\frac{1}{p}\begin{pmatrix}
p-1\\ |u|-1
\end{pmatrix}^{-1} & \text{if }i \in u.
\end{array}\right.
$$
Under Assumption \ref{hypNu}, we have

\begin{eqnarray*}
\V(Y)^2\sum_{i=1}^p \V(\widehat{\eta}_i)&=&\sum_{i=1}^p\sum_{\emptyset \varsubsetneq u\varsubsetneq [1:p]} A_{i,u}^2\V(\widehat{W}_u)\\
&=&\sum_{\emptyset \varsubsetneq u\varsubsetneq [1:p]}\V(\widehat{W}_u)\sum_{i=1}^p A_{i,u}^2\\
&=&\sum_{\emptyset \varsubsetneq u\varsubsetneq [1:p]}\frac{\V(\widehat{W}_u^{(1)})}{N_u}\sum_{i=1}^p A_{i,u}^2.
\end{eqnarray*}
Moreover,
\begin{eqnarray*}
\sum_{i=1}^p A_{i,u}^2&=&\sum_{i \in -u } \frac{1}{p^2} \begin{pmatrix}
p-1\\ |u|
\end{pmatrix}^{-2}+\sum_{i \in u } \frac{1}{p^2} \begin{pmatrix}
p-1\\ |u|-1
\end{pmatrix}^{-2}\\
&=& \frac{1}{p!^2}\left( (p-|u|)|u|!^2(p-|u|-1)!^2+|u|(|u|-1)!^2(p-|u|)!^2 \right)\\
&=&\frac{(p-|u|)!|u|!}{p!^2}(p-|u|-1)!(|u|-1)!(|u|+p-|u|)\\
&=&\frac{(p-|u|)!|u|!}{p!}\frac{(p-|u|-1)!(|u|-1)!}{(p-1)!}\\
&=:&C(|u|,p).
\end{eqnarray*}

Thus, we want to minimize 
$$
\sum_{\emptyset \varsubsetneq u\varsubsetneq [1:p]}\frac{\V(\widehat{W}_u^{(1)})}{N_u}C(|u|,p)
$$
subject to
$$
\sum_{\emptyset \varsubsetneq u\varsubsetneq [1:p]}N_u=\frac{N_{tot}}{\kappa}.
$$
Let $U=(\R_+^*)^{2^p-2}$. If $\bx \in U$, we index the components of $\bx$ by the subsets $\emptyset \varsubsetneq u \varsubsetneq [1:p]$ and we write $\bx=(x_u)_{\emptyset \varsubsetneq u \varsubsetneq [1:p]}$. Let $h$ be the $C^1$ function on $U$ defined by 
$h(\bx)=\sum_{\emptyset \varsubsetneq u\varsubsetneq [1:p]}\frac{C(|u|,p)\V(\widehat{W}_{u}^{(1)})}{x_u}$,
let $g$ be the $C^1$ function on $U$ defined by $g(\bx)=(\sum_{\emptyset \varsubsetneq u\varsubsetneq [1:p]}x_u)-N_{tot}\slash \kappa$ and let $A=g^{-1}(\{0\})$.  Using the method of Lagrange multipliers, if $h_{|A}$ has a local minimum in $\textbf{a}$, there exists $c$ such that $Dh(\textbf{a})=cDg(\textbf{a})$, i.e. $\nabla h(\textbf{a})=\nabla g(\textbf{a})$ i.e. 
$$
\textbf{a}=\frac{N_{tot}}{\kappa \sum_{\emptyset \varsubsetneq v\varsubsetneq [1:p]} \sqrt{C(|v|,p) \V(\widehat{W}_{v}^{(1)})}}\left(\sqrt{C(|u|,p) \V (\widehat{W}_{u}^{(1)}})\right)_{\emptyset \varsubsetneq u\varsubsetneq [1:p]}.
$$
Moreover, note that $h$ is strictly convex and the set $A$ is convex, thus $h_{|A}$ is strictly convex. Thus $\textbf{a}$ is the strict global minimum point of $h_{|A}$.
\end{proof}

\textbf{Proof of Proposition \ref{prop_min_pu}}
\begin{proof}
Let us write $V:=\V(\widehat{W}_u^{(1)})$ that does not depend on $u$ by assumption. To simplify notation, let $N_0=N_p=+\infty$. In this way, we have, for all $u\subset [1:p]$, $\V(\widehat{W}_u(m))=V \slash N_{|u|}$.

We have
\begin{eqnarray*}
\V\left( \left. \widehat{\eta}_i \right| (\sigma_m)_{m\leq M} \right) &=&  \frac{1}{p^2\V(Y)^2}\sum_{u \subset -i}\frac{1}{M^2}\sum_{m=1}^M\left[\V\left(\widehat{W}_{u\cup \{i\}}(m)\right)+\V\left(\widehat{W}_{u}(m)\right)\right]\mathds{1}_{ P_i(\sigma_m)=u}\\
&=&\frac{V}{p^2\V(Y)^2}\sum_{u \subset -i}\frac{1}{M^2}\sum_{m=1}^M\left[ \frac{1}{N_{|u \cup \{i\}|}}+\frac{1}{N_u} \right]\mathds{1}_{ P_i(\sigma_m)=u}.
\end{eqnarray*}
Thus,
\begin{eqnarray*}
\E\left[ \V\left( \left. \widehat{\eta}_i \right| (\sigma_m)_{m\leq M} \right)\right]&=&\frac{V}{p^2\V(Y)^2}\sum_{u \subset -i}\frac{1}{M^2}\sum_{m=1}^M\left[ \frac{1}{N_{|u \cup \{i\}|}}+\frac{1}{N_u} \right]\PP( P_i(\sigma_m)=u)\\
&=&\frac{V}{p^2\V(Y)^2}\sum_{u \subset -i}\frac{1}{M^2}\sum_{m=1}^M \frac{1}{p}\begin{pmatrix}p-1\\ |u| \end{pmatrix}^{-1}\left[ \frac{1}{N_{|u \cup \{i\}|}}+\frac{1}{N_{|u|}} \right]\\
&=&\frac{V}{p^2\V(Y)^2} \sum_{u\subset[1:p]} a_{i,u} \frac{1}{N_{|u|}},
\end{eqnarray*}
where
$$
a_{i,u}:=\left\{ \begin{array}{cc}
\frac{1}{p} \begin{pmatrix}p-1\\ |u| \end{pmatrix}^{-1}    & \text{if } i\notin u  \\
  \begin{pmatrix}p-1\\ |u|-1 \end{pmatrix}^{-1}    & \text{if } i\in u.
\end{array}\right.
$$
Remark that $\sum_{i=1}^pa_{i,u}=2\begin{pmatrix} p\\ |u| \end{pmatrix}^{-1}$. Then,
\begin{eqnarray*}
\E\left[ \sum_{i=1}^p\V\left( \left. \widehat{\eta}_i \right| (\sigma_m)_{m\leq M} \right)\right]&=&\sum_{i=1}^p \frac{V}{p^2\V(Y)^2} \sum_{u\subset[1:p]} a_{i,u} \frac{1}{N_{|u|}}\\
&=& \frac{V}{p^2\V(Y)^2}   \sum_{u\subset[1:p]} \frac{1}{N_{|u|}}  \sum_{i=1}^p  a_{i,u}\\
&=& \frac{2V}{p^2\V(Y)^2}  \sum_{u\subset[1:p]}  \frac{1}{N_{|u|}} \begin{pmatrix} p\\ |u| \end{pmatrix}^{-1}\\
&=& \frac{2V}{p^2\V(Y)^2} \sum_{k=1}^{p-1} \frac{1}{N_k}
\end{eqnarray*}
We get the relaxed problem
\begin{eqnarray*}
\min_{(N_k)_{k\in [1:p-1]}}\frac{2V}{p^2\V(Y)^2} \sum_{k=1}^{p-1} \frac{1}{N_k}
\end{eqnarray*}
subject to $M\sum_{k=1}^{p-1}N_k=MN_O(p-1)$. Let $U=(\R_+^*)^{p-1}$. Let $h$ be the $C^1$ function on $U$ defined by $h(\bx)=\frac{2V}{p^2\V(Y)^2} \sum_{k=1}^{p-1} \frac{1}{x_k}$, $g$ be the $C^1$ function on $U$ defined by $g(\bx)=M\sum_{k=1}^{p-1}x_k-MN_O(p-1)$. Finally, let $A=g^{-1}(\{0\})$. Using the method of Lagrange multipliers, if $h_{|A}$ has a local minimum in $\ba$, there exists $c$ such that $Dh(\ba)=cDg(\ba)$, i.e. $\nabla h(\ba)=\nabla g(\ba)$ i.e. $\forall u, \;-\frac{1}{a_u^2}=c'$ i.e. $a_u=c''$. To sum up, if $h_{|A}$ has a local minimum, it is in $\ba$ defined by
$$
a_u=N_OMp_u.
$$
Moreover, note that $h$ is strictly convex and the set $A$ is convex, thus $h_{|A}$ is strictly convex. Thus $\ba$ is the strict global minimum point of $h_{|A}$. Thus, $\ba$ is the global minimum on the constraint problem (where the inputs are integers).
\end{proof}

\textbf{Proof of Proposition \ref{prop_choix_param}}
This proof totally arises from the appendix of \cite{song_shapley_2016}. The computations are the same.
\begin{proof}
Under Assumption \ref{assum_sansbiais}, we have
\begin{eqnarray*}
\V(\widehat{\eta}_i)&=&\frac{1}{M\V(Y)^2}\left( \V\left( \widehat{W}_{P_{i}(\sigma_1)\cup\{i\}} \right)+ \V\left( \widehat{W}_{P_{i}(\sigma_1)} \right) \right)\\
&=&\frac{1}{M\V(Y)^2}\bigg( \V(\E (\widehat{W}_{P_{i}(\sigma_1)\cup\{i\}} |\sigma_1))+\E(\V(\widehat{W}_{P_{i}(\sigma_1)\cup\{i\}} |\sigma_1))\\
& &+ \V(\E( \widehat{W}_{P_{i}(\sigma_1)}|\sigma_1))+\E(\V( \widehat{W}_{P_{i}(\sigma_1)}|\sigma_1)) \bigg)\\
&=&\frac{1}{C\V(Y)^2}\bigg( N_O\V(W_{P_{i}(\sigma_1)\cup\{i\}})+N_O\V(W_{P_{i}(\sigma_1)})\\ & & +\E(\V(\widehat{W}_{P_{i}(\sigma_1)\cup\{i\}}^{(1)} |\sigma_1))+\E(\V(\widehat{W}_{P_{i}(\sigma_1)}^{(1)} |\sigma_1)) \bigg).
\end{eqnarray*}
Thus, the minimum is with $N_O=1$.
\end{proof}

\textbf{Proof of Proposition \ref{prop_consistance_random-permutation $W$-aggregation procedure}}
\begin{proof} We only prove the second item. The first one is easier and uses the same idea. Let $i \in [1:p]$. Remark that
\begin{eqnarray*}
\widehat{\eta}_i&=&\frac{1}{M\V(Y)}\sum_{m=1}^M\left(\widehat{W}_{P_{i}(\sigma_m)\cup\{i\}}(m)- \widehat{W}_{P_i(\sigma_m)}(m)\right)\\
& =&\frac{1}{p\V(Y)}\sum_{u\subset -i}\begin{pmatrix}
p-1\\ |u|
\end{pmatrix}^{-1} \left(\tilde{W}_{u\cup\{i\},i }- \tilde{W}_{u,i} \right)
\end{eqnarray*}
with
$$
\tilde{W}_{u,i}:=\begin{pmatrix}
p-1\\ |u|
\end{pmatrix} \frac{p}{M}\sum_{m|\;P_i(\sigma_m)=u}\widehat{W}_{u}(m)\;\;\;\text{and}\;\;\;
\tilde{W}_{u\cup\{i\},i}:=\begin{pmatrix}
p-1\\ |u|
\end{pmatrix} \frac{p}{M}\sum_{m|\;P_i(\sigma_m)=u}\widehat{W}_{u\cup\{i\}}(m),
$$
where we sum over all the integers $m \in [1:M]$ such that $P_i(\sigma_m)=u$.
Thus, for all $u$,
$$
\tilde{W}_{u,i} \sim  \begin{pmatrix}
p-1\\ |u\setminus \{i\}|
\end{pmatrix} \frac{p}{M} \tilde{N}_{u,i,M} \widehat{W}_{u}^{\tilde{N}_{u,i,M}},
$$
where
$$
\widehat{W}_{u}^{\tilde{N}_{u,i,M}}:= \frac{1}{\tilde{N}_{u,i,M}}\sum_{k=1}^{\tilde{N}_{u,i,M}}\widehat{W}_{u}(k),
$$
and $\tilde{N}_{u,i,M}=\tilde{N}_{u\cup\{i\},i,M}\sim \mathcal{B}(M,\frac{|u|!(p-1-|u|)!}{p!})$ (the binomial distribution).
Now, remark that $M$ goes to $+\infty$ when $N_{tot}$ goes to $+\infty$ (recall that $N_{tot}= \kappa M(p-1)$). Hence,
$$
\begin{pmatrix}
p-1\\ |u\setminus \{i\}|
\end{pmatrix} \frac{p}{M} \tilde{N}_{u,i,M} \overset{\PP}{\underset{N_{tot}\rightarrow +\infty}{\longrightarrow}}1.
$$
It suffices to show that for all $u\subset[1:p]$, the estimator $\omega\mapsto\widehat{W}_{u}^{\tilde{N}_{u,i,M}(\omega)}(\omega)$ converges to $W_u$ in probability when $N$ and $N_{tot}$ go to $+\infty$ and we could conclude by
\begin{eqnarray*}
\widehat{\eta}_i & = & \frac{1}{p\V(Y)}\sum_{u\subset -i}\begin{pmatrix}
p-1\\ |u|
\end{pmatrix}^{-1} \left(\tilde{W}_{u\cup\{i\},i }- \tilde{W}_{u,i} \right)\\
& \overset{\PP}{\underset{\substack{ N_{tot}\rightarrow +\infty\\ N \rightarrow +\infty}}{\longrightarrow}} & \frac{1}{p\V(Y)}\sum_{u\subset -i}\begin{pmatrix}
p-1\\ |u|
\end{pmatrix}^{-1} \left(W_{u\cup\{i\} }- W_{u} \right)\\
&=& \eta_i.
\end{eqnarray*}
Let $\varepsilon>0$ and $\delta>0$. Using the assumptions and Chebyshev's inequality, we have that $(\widehat{W}_{u}^{N_O})_{N_O,N}$ is consistent, thus there exists $N_{O1}$ and $N_1$ such that for all $N_O\geq N_{O1}$ and all $N \geq N_1$,
$$
\PP\left( \left|\widehat{W}_u^{N_O}-W_u \right|>\delta\right) <\frac{\varepsilon}{2}.
$$
Moreover, 
\begin{eqnarray*}
\PP(\tilde{N}_{u,M}\leq N_{O1})
\underset{M\rightarrow+\infty}{\longrightarrow}0.
\end{eqnarray*}
Thus, there exists $M_1$ such that for all $M\geq M_1$,
$$
\PP(\tilde{N}_{u,M}\leq N_{O1})<\frac{\varepsilon}{2}.
$$
Thus, there exists $N_{tot1}$ such that for all $N_{tot}\geq N_{tot1}$,
$$
\PP(\tilde{N}_{u,M}\leq N_{O1})<\frac{\varepsilon}{2}.
$$
Finally, for all $N_{tot}\geq N_{tot1}$ and $N\geq N_1$, we have
\begin{eqnarray*}
\PP\left( \left|\widehat{W}_u^{\tilde{N}_{u,M}}-W_u \right|>\delta\right)&\leq & \PP\left( \left|\widehat{W}_u^{\tilde{N}_{u,M}}-W_u \right|>\delta,\;\;\tilde{N}_{u,M}\geq N_{O1}\right)+\PP(\tilde{N}_{u,M}\leq N_{O1})\\
&<&\varepsilon.
\end{eqnarray*}
That proves that the estimator $\omega\mapsto\widehat{W}_{u}^{\tilde{N}_{u,i,M}(\omega)}(\omega)$ converges to $W_u$ in probability when $N$ and $N_{tot}$ go to $+\infty$.
\end{proof}

\textbf{Proof of Corollary \ref{corol_MC} and Corollary \ref{corol_PF}}

We do the proof for Corollary \ref{corol_MC}. The proof of Corollary \ref{corol_PF} uses the same idea.
\begin{proof}
Let $\delta>0$. Thanks to Theorem \ref{thrm_vitesse_MC}, with $\varepsilon'=\delta$, we have
\begin{eqnarray*}
\PP\left(N^{\frac{1}{2(p-|u|)}-\delta}\left| \widehat{E}_{u,MC}-E_{u} \right|>\varepsilon\right)& \leq & \frac{C_{\sup}(\delta)N^{\frac{1}{p-|u|}-2\delta}}{\varepsilon^2 N^{\frac{1}{p-|u|}-\delta}}\underset{N\rightarrow+\infty}{\longrightarrow}0.
\end{eqnarray*}
That concludes the proof of Corollary \ref{corol_MC}. 
\end{proof}

\textbf{Proof of Proposition \ref{prop_consis_shap_inconnue}}
\begin{proof}
If we use the subset $W$-aggregation procedure, we just have to use the consistency of $\widehat{W}_u$ from Theorems \ref{thrm_consis_MC} and \ref{thrm_consis_PF} and to use Proposition \ref{prop_consistance_shapley1}.

If we use the subset $W$-aggregation procedure, the consistency of the estimators of the Shapley effects comes from the second part of Proposition \ref{prop_consistance_random-permutation $W$-aggregation procedure}. We just have to verify Assumption \ref{assum_consis_perm}. Let $\widehat{W}_{u}(m)$ of Proposition \ref{prop_consistance_random-permutation $W$-aggregation procedure} be $\widehat{E}_{u,s(m),MC}$ or $\widehat{V}_{u,s(m),PF}$ defined in Section \ref{section_Wu_inconnue}, where $(s(m))_{m}$ are independent and uniformly distributed on $[1:N]$. Then, following the end of the proof of Theorems \ref{thrm_consis_MC} and \ref{thrm_consis_PF}, we obtain
$$
\frac{1}{M^2} \sum_{m,m'=1}^{M} cov\left( \widehat{W}_{u}{(m)},\widehat{W}_{u}{(m')} \right) \underset{N,M\rightarrow+\infty}{\longrightarrow}0,
$$
and, by Proposition \ref{prop_asymp_sans_bias}, we have
$$
\E\left( \widehat{W}_u{(1)} \right)=\E\left( \widehat{W}_u^{(1)} \right) \underset{N\rightarrow +\infty}{\longrightarrow} W_u.
$$
Thus, Assumption \ref{assum_consis_perm} holds.
\end{proof}

\bibliographystyle{alpha}
\bibliography{biblio}

\newcommand{\etalchar}[1]{$^{#1}$}
\begin{thebibliography}{MvLG{\etalchar{+}}08}

\bibitem[BBDM19]{broto_sensitivity_2019}
Baptiste Broto, François Bachoc, Marine Depecker, and Jean-Marc Martinez.
\newblock Sensitivity indices for independent groups of variables.
\newblock {\em Mathematics and Computers in Simulation}, 163:19--31, September
  2019.

\bibitem[BEDC19]{benoumechiara_shapley_2019}
Nazih Benoumechiara and Kevin Elie-Dit-Cosaque.
\newblock Shapley effects for sensitivity analysis with dependent inputs:
  bootstrap and kriging-based algorithms.
\newblock {\em ESAIM: Proceedings and Surveys}, 65:266--293, 2019.

\bibitem[BGLR18]{bachoc_entropic_2018}
Francois Bachoc, Fabrice Gamboa, Jean-Michel Loubes, and Laurent Risser.
\newblock Entropic {Variable} {Boosting} for {Explainability} and
  {Interpretability} in {Machine} {Learning}.
\newblock {\em arXiv preprint arXiv:1810.07924}, 2018.

\bibitem[BHP16]{borgonovo_common_2016}
Emanuele Borgonovo, Gordon~B. Hazen, and Elmar Plischke.
\newblock A {Common} {Rationale} for {Global} {Sensitivity} {Measures} and
  {Their} {Estimation}.
\newblock {\em Risk Analysis: An Official Publication of the Society for Risk
  Analysis}, 36(10):1871--1895, October 2016.

\bibitem[BMM{\etalchar{+}}07]{besse_comparaison_2007}
Philippe Besse, Hélène Milhem, Olivier Mestre, Anne Dufour, and Vincent-Henri
  Peuch.
\newblock Comparaison de techniques de «{Data} {Mining}» pour l'adaptation
  statistique des prévisions d'ozone du modèle de chimie-transport {MOCAGE}.
\newblock {\em Pollution Atmosphérique}, 49(195):285--292, 2007.

\bibitem[BS19]{berrett_efficient_2019}
Thomas~B. Berrett and Richard~J. Samworth.
\newblock Efficient two-sample functional estimation and the super-oracle
  phenomenon.
\newblock {\em arXiv:1904.09347 [math, stat]}, April 2019.

\bibitem[BSY19]{berrett_efficient_2019-1}
Thomas~B. Berrett, Richard~J. Samworth, and Ming Yuan.
\newblock Efficient multivariate entropy estimation via \$k\$-nearest neighbour
  distances.
\newblock {\em The Annals of Statistics}, 47(1):288--318, February 2019.

\bibitem[CBSV16]{colini-baldeschi_variance_2016}
Riccardo Colini-Baldeschi, Marco Scarsini, and Stefano Vaccari.
\newblock Variance allocation and {Shapley} value.
\newblock {\em Methodology and Computing in Applied Probability}, pages 1--15,
  2016.

\bibitem[CGT09]{castro_polynomial_2009}
Javier Castro, Daniel Gómez, and Juan Tejada.
\newblock Polynomial calculation of the {Shapley} value based on sampling.
\newblock {\em Computers \& Operations Research}, 36(5):1726--1730, May 2009.

\bibitem[Cha13]{chastaing_indices_2013}
Gaëlle Chastaing.
\newblock {\em Indices de {Sobol} généralisés pour variables dépendantes}.
\newblock phdthesis, Université de Grenoble, September 2013.

\bibitem[FRK15]{fruth_sequential_2015}
Jana Fruth, Olivier Roustant, and Sonja Kuhnt.
\newblock Sequential designs for sensitivity analysis of functional inputs in
  computer experiments.
\newblock {\em Reliability Engineering \& System Safety}, 134:260--267, 2015.

\bibitem[FWJ08]{fatima_linear_2008}
Shaheen~S. Fatima, Michael Wooldridge, and Nicholas~R. Jennings.
\newblock A linear approximation method for the {Shapley} value.
\newblock {\em Artificial Intelligence}, 172(14):1673--1699, September 2008.

\bibitem[Gho01]{ghosal_convergence_2001}
Subhashis Ghosal.
\newblock Convergence rates for density estimation with {Bernstein}
  polynomials.
\newblock {\em The Annals of Statistics}, 29(5):1264--1280, 2001.

\bibitem[GJK{\etalchar{+}}16]{gamboa_statistical_2016}
Fabrice Gamboa, Alexandre Janon, Thierry Klein, A.~Lagnoux, and Clémentine
  Prieur.
\newblock Statistical inference for {Sobol} pick-freeze {Monte} {Carlo} method.
\newblock {\em Statistics}, 50(4):881--902, 2016.

\bibitem[GJKL14]{gamboa_sensitivity_2014}
Fabrice Gamboa, Alexandre Janon, Thierry Klein, and Agnès Lagnoux.
\newblock Sensitivity analysis for multidimensional and functional outputs.
\newblock {\em Electronic Journal of Statistics}, 8(1):575--603, 2014.

\bibitem[HI03]{hubert_strategic_2003}
Franz Hubert and Svetlana Ikonnikova.
\newblock Strategic investment and bargaining power in supply chains: {A}
  {Shapley} value analysis of the {Eurasian} gas market.
\newblock March 2003.

\bibitem[HS96]{homma_importance_1996}
Toshimitsu Homma and Andrea Saltelli.
\newblock Importance measures in global sensitivity analysis of nonlinear
  models.
\newblock {\em Reliability Engineering and System Safety}, 52(1):1--17, 1996.

\bibitem[IP17]{iooss_shapley_2017}
Bertrand Iooss and Clémentine Prieur.
\newblock Shapley effects for sensitivity analysis with dependent inputs:
  comparisons with {Sobol}' indices, numerical estimation and applications.
\newblock {\em arXiv:1707.01334 [math, stat]}, July 2017.

\bibitem[JKLR{\etalchar{+}}14]{janon_asymptotic_2014}
Alexandre Janon, Thierry Klein, Agnes Lagnoux-Renaudie, Maëlle Nodet, and
  Clémentine Prieur.
\newblock Asymptotic normality and efficiency of two {Sobol} index estimators.
\newblock {\em ESAIM: Probability and Statistics}, 18:342--364, October 2014.

\bibitem[JLD06]{jacques_sensitivity_2006}
Julien Jacques, Christian Lavergne, and Nicolas Devictor.
\newblock Sensitivity analysis in presence of model uncertainty and correlated
  inputs.
\newblock {\em Reliability Engineering \& System Safety}, 91(10-11):1126--1134,
  2006.

\bibitem[JSW98]{jones_efficient_1998}
Donald~R. Jones, Matthias Schonlau, and William~J. Welch.
\newblock Efficient global optimization of expensive black-box functions.
\newblock {\em Journal of Global optimization}, 13(4):455--492, 1998.

\bibitem[MT12]{mara_variance-based_2012}
Thierry~A. Mara and Stefano Tarantola.
\newblock Variance-based sensitivity indices for models with dependent inputs.
\newblock {\em Reliability Engineering \& System Safety}, 107:115--121,
  November 2012.

\bibitem[MvLG{\etalchar{+}}08]{moretti_combining_2008}
Stefano Moretti, Danitsja van Leeuwen, Hans Gmuender, Stefano Bonassi, Joost
  van Delft, Jos Kleinjans, Fioravante Patrone, and Domenico~Franco Merlo.
\newblock Combining {Shapley} value and statistics to the analysis of gene
  expression data in children exposed to air pollution.
\newblock {\em BMC Bioinformatics}, 9:361, September 2008.

\bibitem[OP17]{owen_shapley_2017}
Art~B. Owen and Clémentine Prieur.
\newblock On {Shapley} value for measuring importance of dependent inputs.
\newblock {\em SIAM/ASA Journal on Uncertainty Quantification}, 5(1):986--1002,
  2017.

\bibitem[Owe14]{owen_sobol_2014}
Art~B. Owen.
\newblock Sobol' {Indices} and {Shapley} {Value}.
\newblock {\em SIAM/ASA Journal on Uncertainty Quantification}, 2(1):245--251,
  January 2014.

\bibitem[PBS13]{plischke_global_2013}
Elmar Plischke, Emanuele Borgonovo, and Curtis~L. Smith.
\newblock Global sensitivity measures from given data.
\newblock {\em European Journal of Operational Research}, 226(3):536--550, May
  2013.

\bibitem[Pli10]{plischke_effective_2010}
Elmar Plischke.
\newblock An effective algorithm for computing global sensitivity indices
  ({EASI}).
\newblock {\em Reliability Engineering \& System Safety}, 95(4):354--360, April
  2010.

\bibitem[RSG16]{ribeiro_why_2016}
Marco~Tulio Ribeiro, Sameer Singh, and Carlos Guestrin.
\newblock Why should {I} trust you?: {Explaining} the predictions of any
  classifier.
\newblock In {\em Proceedings of the 22nd {ACM} {SIGKDD} international
  conference on knowledge discovery and data mining}, pages 1135--1144. ACM,
  2016.

\bibitem[Sha53]{shapley_value_1953}
L.~S. Shapley.
\newblock A value for n-person games. {Contribution} to the {Theory} of
  {Games}.
\newblock {\em Annals of Mathematics Studies}, 2:28, 1953.

\bibitem[SNS16]{song_shapley_2016}
Eunhye Song, Barry~L. Nelson, and Jeremy Staum.
\newblock Shapley {Effects} for {Global} {Sensitivity} {Analysis}: {Theory} and
  {Computation}.
\newblock {\em SIAM/ASA Journal on Uncertainty Quantification},
  4(1):1060--1083, January 2016.

\bibitem[Sob93]{sobol_sensitivity_1993}
Ilya~M. Sobol.
\newblock Sensitivity estimates for nonlinear mathematical models.
\newblock {\em Mathematical Modelling and Computational Experiments},
  1(4):407--414, 1993.

\bibitem[SWNW03]{santner_design_2003}
Thomas~J. Santner, Brian~J. Williams, William Notz, and Brain~J. Williams.
\newblock {\em The design and analysis of computer experiments}, volume~1.
\newblock Springer, 2003.

\bibitem[Tju74]{tjur_conditional_1974}
Tue Tjur.
\newblock {\em Conditional {Probability} {Distributions}}.
\newblock Institute of Mathematical Statistics, University of Copenhagen, 1974.

\bibitem[VG13]{veiga_efficient_2013}
Sébastien~Da Veiga and Fabrice Gamboa.
\newblock Efficient estimation of sensitivity indices.
\newblock {\em Journal of Nonparametric Statistics}, 25(3):573--595, 2013.

\end{thebibliography}

\end{document}